\documentclass[9pt]{amsart}
\usepackage{amsmath}
\usepackage{amsfonts}
\usepackage{color}
\usepackage{verbatim}
\usepackage{eepic}
\usepackage{epic}
\usepackage{epsfig,subfigure}
\usepackage[]{graphics}
\usepackage{geometry}

\usepackage[colorlinks,linktocpage,linkcolor=blue]{hyperref}



\hsize=6.6in
\oddsidemargin=0.2in
\evensidemargin=0.2in

\def\tribar{\vert\thickspace\!\!\vert\thickspace\!\!\vert}

\allowdisplaybreaks[1]

\newtheorem{theorem}{Theorem}[section]
\newtheorem{lemma}{Lemma}[section]

\newtheorem{remark}{Remark}[section]

\numberwithin{equation}{section}

\begin{document}

\newcommand{\al}{\alpha}
\newcommand{\be}{\beta}
\newcommand{\om}{\omega}
\newcommand{\ga}{\gamma}
\newcommand{\wh}{\widehat}
\newcommand{\Ga}{\Gamma}
\newcommand{\ka}{\kappa}
\newcommand{\teps}{\tilde e}
\newcommand{\fy}{\varphi}
\newcommand{\Om}{\Omega}
\newcommand{\si}{\sigma}
\newcommand{\Si}{\Sigma}
\newcommand{\de}{\delta}
\newcommand{\De}{\Delta}
\newcommand{\la}{\lambda}
\newcommand{\La}{\Lambda}
\newcommand{\ep}{\epsilon}

\newcommand{\vth}{\vartheta}
\newcommand{\vtht}{{\widetilde \vartheta}}
\newcommand{\rh}{\varrho}
\newcommand{\rlh}{{\widetilde \varrho}}

\renewcommand{\for}{\quad{\mbox{for}}\quad}
\newcommand{\on}{\quad\text{on}\ }
\newcommand{\orr}{\quad\text{or}\ }
\newcommand{\inn}{\quad\text{in}\ }
\newcommand{\as}{\quad\text{as}\ }
\newcommand{\at}{\quad\text{at}\ }
\newcommand{\ifff}{\quad\text{if}\ }
\newcommand{\andy}{\quad\text{and}\ }
\newcommand{\with}{\quad\text{with}\ }
\newcommand{\when}{\quad\text{when}\ }
\newcommand{\where}{\quad\text{where}\ }
\newcommand{\FEM}{\text{finite element method }}
\newcommand{\FE}{\text{finite element }}
\def\tribar{\vert\thickspace\!\!\vert\thickspace\!\!\vert}

\def\Mitaga1{{E_{\al,1}}}
\def\Mitagaa{{E_{\al,\al}}}
\def\C{{\mathbb{C}}}
\def\V{{H^1_0(\Omega)}}

\def\NN{{N}}
\def\Etilh{{\bar{E}_h}}

%
\def\guh{{\underline u}_h}    
\def\guht{{\underline u}_{h,t}}    
\def\puht{u_{h,t}}                 
\def\puh{\widetilde{u}_h}                 
\def\luh{{\bar u}_h}     
\def\luht{{\bar u}_{h,t}}
\def\luhtt{{\bar u}_{h,tt}}
\def\p{q}
\def\q{\luht}
\def\luhp{\luht}
\def\L2o{{L_2(\Om)}}
\def\K{\tau}
\def\zK{z^\K}
\def\T{{\mathcal{T}}}

\def\Dal{{\partial^\alpha_t}}
\def\Dt{{\partial_t}}
\def\Dalpl{{\partial^{\alpha+l}_t}}
\def\gamal{{\gamma}_\alpha}

\def\bDelh{{\bar{\Delta}_h}}
\def\Ftilh{\bar{F}_h}
\def\Pbarh{\bar{P}_h}
\def\Ebar{{F}}
\def\Ellipt{{\mathcal L}}
\def\dtoa{D_t^{1-\al}}
\def\dta{D_t^{\al}}
\def\adtoa{(D_t^{1-\al})^*}
\def\oatwo{{(1-\alpha)/2}}
\def\adta{(D_t^{\al})^*}
\def\dH#1{\dot H^{#1}(\Omega)}
\def\normh#1#2{\tribar #1 \tribar_{\dot H^{#2}(\Omega)}}

\title[Galerkin FEM for inhomogeneous time-fractional diffusion]
{Error Analysis of Semidiscrete Finite Element Methods for Inhomogeneous Time-Fractional Diffusion}
\author {Bangti Jin \and Raytcho Lazarov \and Joseph Pasciak \and Zhi Zhou}
\address {Department of Mathematics and Institute for
Applied Mathematics and Computational Science, Texas A\&M University,
College Station, TX 77843-3368 ({\texttt{btjin,lazarov,pasciak,zzhou@math.tamu.edu}})}

\date{started July, 2012; today is \today}

\maketitle

\begin{abstract}
We consider the initial boundary value problem for the inhomogeneous time-fractional diffusion equation with a homogeneous
Dirichlet boundary condition and a nonsmooth right hand side data in a bounded convex polyhedral domain.
We analyze two semidiscrete schemes based on the standard Galerkin and lumped mass finite element methods.
Almost optimal error estimates are obtained for right hand side data $f(x,t)\in L^\infty(0,T;\dot H^q(\Omega))$,
$-1< q \le 1$, for both semidiscrete schemes. For lumped mass method, the optimal $L^2(\Omega)$-norm error estimate
requires symmetric meshes. Finally, numerical experiments for one- and two-dimensional examples are presented to verify our theoretical results.
\end{abstract}

\section{Introduction}\label{sec:intro}
We consider the model initial-boundary value problem for the fractional-order parabolic
differential equation (FPDE) for $u(x,t)$:
\begin{alignat}{3}\label{eq1}
   \Dal u-\Delta u&= f,&&\quad \text{in  } \Omega&&\quad T \ge t > 0,\notag\\
   u&=0,&&\quad\text{on}\  \partial\Omega&&\quad T \ge t > 0,\\
    u(0)&=v,&&\quad\text{in  }\Omega,&&\notag
\end{alignat}
where $\Omega$ is a bounded convex polygonal domain in $\mathbb R^d\,(d=1,2,3)$ with
a boundary $\partial\Omega$, $v$ and $f$ are given functions and $T>0$ is a fixed value.
In the model \eqref{eq1}, $\Dal u$ refers to the Caputo fractional derivative of order
$\alpha$ ($0<\alpha<1$) of the function $u(t)$,
and it is defined by \cite[pp. 91, eq. (2.4.1)]{KilbasSrivastavaTrujillo:2006}
\begin{equation*}
  \Dal u (t) = \frac{1}{\Gamma(1-\alpha)}\int_0^t \frac{1}{(t-s)^\alpha}u'(s)ds.
\end{equation*}
It is known that for the fractional order $\alpha=1$, the fractional derivative $\Dal u$
recovers the canonical first-order derivative $u'(t)$ \cite[pp. 92, eq. (2.4.14)]{KilbasSrivastavaTrujillo:2006},
and accordingly the model \eqref{eq1} reduces to the classical time-dependent diffusion
problem. Therefore, the model \eqref{eq1} can be regarded as a fractional
counterpart of the standard diffusion problem.

The interest in \eqref{eq1} is mainly motivated by anomalous diffusion processes,
 known as subdiffusion, in which the mean square variance grows slower
than that in a Gaussian process. At a microscopic level, the diffusion process
results from random motion of particles. In a subdiffusion process, the waiting time
between consecutive random particle motion can be very large, and thus the mean
waiting time diverges. Thus, the underlying stochastic process deviates significantly
from the Brownian motion, and instead it can only be more adequately described by
the continuous time random walk (CTRW) \cite{MontrollWeiss:1965}. This microscopic
explanation has been frequently exploited in applications. For example, Adams and Gelhar \cite{AdamsGelhar:1992} observed
that field data show anomalous diffusion in a highly heterogeneous aquifer, and
Hatano and Hatano \cite{HatanoHatano:1998} applied the CTRW to model
anomalous diffusion in an underground environmental problem. The macroscopic
counterpart of the CTRW is a diffusion equation with a time fractional derivative
$\Dal u(t)$, i.e. model \eqref{eq1}. It has been successfully applied to many practical
problems, including diffusion in media with fractal geometry \cite{Nigmatulin},
the dynamics of viscoelastic materials \cite{GionaCerbeliRoman:1992},
and contaminant transport within underground water flow \cite{Berkowitz:2002}.

The modeling capabilities of FPDEs have generated considerable
interest in deriving, analyzing, and testing numerical methods for such problems. As
a result, a number of numerical techniques were developed, and their stability and
convergence properties were investigated. Yuste and Acedo \cite{YusteAcedo:2005} presented
a numerical scheme by combining the forward time centered space
method for the ordinary diffusion equation and
Grunwald-Letnikov discretization of the (Riemann-Liouville type) fractional-derivative and
provided a von Neumann type stability analysis. Lin and Xu \cite{LinXu:2007}
proposed a numerical method based a finite difference scheme in time
and Legendre spectral method in space, showed its unconditional stability,
and provided error estimates. Li and Xu \cite{LiXu:2009} developed a spectral
method in both temporal and spatial variables and established various a priori error
estimates. Mustapha \cite{Mustapha:2011} studied semidiscrete in time and fully discrete schemes and
derived error bounds for smooth initial data \cite[Theorem 4.3]{Mustapha:2011}.
In all these useful studies, the error analysis was carried out under the assumption that the solution
is sufficiently smooth. The optimality of the estimates with respect to
the solution smoothness expressed through the problem data, i.e., the right hand side
$f$ and the initial data $v$, was not considered.

Thus, these useful studies do not cover the interesting case of solutions with limited regularity due to
low regularity of the data, a typical case for related inverse problems; see, e.g.,
\cite{ChengNakagawaYamamoto:2009} and also \cite{KeungZou:1998} and \cite{XieZou:2006} for its
parabolic counterpart. In our earlier work \cite{Bangti_LZ_2013}, we have analyzed the semidiscrete
Galerkin finite element method (FEM) and lumped mass method for problem \eqref{eq1} with a zero
right hand side. In particular, almost optimal error estimates were established for both smooth
and nonsmooth initial data, i.e., $v\in \dH q$, $0\leq q\leq 1$.  (See section 2.2 for the
definitions of the space $\dH q$.) More recently in \cite{JinLazarovPasciakZhou:2013},
the results were generalized to the case of very weak initial data, i.e., $v\in\dH q$, $-1<q<0$ .
We also refer interested readers to \cite{McLean-Thomee-IMA,McLeanThomee:2010}
for related studies on FPDEs with a Riemann-Liouville fractional derivative and non-smooth initial data.

In this work, we analyze the case of a non-smooth right hand side, i.e., $f\in L^r(0,T;\dH q)$,
$-1<q\leq1$,  $r>1$. Such problems occur in many practical applications, e.g.,
optimal control problems and inverse problems \cite{JinRundell:2012b}.
Thus, it is of immense interest to develop and to analyze related
numerical schemes. However, with such weak data it might be difficult to define
a proper weak solution. In Remark \ref{weakest_sol} below, we note that
a function $u(x,t)$ expressed by the representation \eqref{Duhamel2}
satisfies the differential equation for $1< r$, but it satisfies (in generalized sense)
the initial condition $u(x,0)=0$ for $r >1/\al$ only. Therefore, the natural
class of weak data would be $f\in L^r(0,T;\dH q)$, $-1<q\leq1$, $r>1/\al$.
In fact, all results (upon minor modifications) in this paper are valid for
problem \eqref{eq1}  with such data. However, for the ease of exposition,
we shall assume that $f \in L^\infty(0,T;\dH q)$, $-1<q\leq1$,
which guarantees that the representation formula \eqref{Duhamel2} does give a
legitimate solution weak solution $u(x,t)$ for all $0 < \al <1$.

The goal of this paper is to develop an error analysis with optimal with respect
to the regularity of the right hand side estimates for the semidiscrete Galerkin and the
lumped mass Galerkin FEM for problem \eqref{eq1} on convex
polygonal domains. Now we describe the numerical schemes, using the standard notation from \cite{Thomee97}.
Let $\{\mathcal{T}_h\}$, $0<h<1$, be a family of shape regular and quasi-uniform partitions
of the domain $\Omega$ into $d$-simplices, called finite elements, with $h$ denoting the
maximum diameter. The approximate solution $u_h$
will be sought in the finite element space $X_h=X_h(\Omega)$
of continuous piecewise linear functions over the triangulation $\mathcal{T}_h$
\begin{equation*}
   X_h = \{\chi\in H_0^1(\Omega): \chi \mbox{ is a linear function over }  \tau \quad \forall \tau \in \mathcal{T}_h\}.
\end{equation*}

The semidiscrete Galerkin FEM for problem \eqref{eq1} reads: find $u_h(t)\in X_h$
such that
\begin{equation*}
  (\Dal u_h(t), \chi) + a(u_h, \chi) = (f,\chi),\quad \forall \chi\in X_h, \ \ t>0,
\end{equation*}
with $u_h(0)=0$, and $a(u,w) = (\nabla u, \nabla w)$ for $u,w\in H_0^1(\Omega)$.
We shall study the convergence of the semidiscrete Galerkin FEM for the case
of a right hand side $f\in L^\infty(0,T;\dH q)$, $-1<q\leq1$.

The main difficulty in the analysis stems from
limited smoothing properties of the solution operator,  cf. Lemma \ref{lem:E}.
This difficulty  is overcome by
exploiting the mapping property of discrete solution operators established in Lemma \ref{lem:Eh}.
Our main results are as follows. First in Theorem \ref{thm:gal:l2}, we derive an optimal
$L^2(0,T;\dot H^p(\Omega))$-norm, $p=0,1$, error bound:
\begin{equation*}
 \| u_h - u \|_{L^2(0,T;L^2(\Om))} +
h\|\nabla(u_h- u)\|_{L^2(0,T;L^2(\Om))} \le Ch^{2+\min(q,0)}  \|f\|_{L^2(0,T;\dH q)},
\end{equation*}
for $-1<q\leq1$. Second, we derive an almost optimal $L^\infty(0,T;\dot H^p(\Omega))$-norm
estimate of the error with an additional log-factor
$\ell_h:=|\ln h|$ for $f\in L^\infty(0,T;\dH q)$, $-1<q\leq1$ (cf. Theorem \ref{thm:gal:linf}):
\begin{equation*}
 \| u_h(t) - u(t) \|_{L^2(\Omega)} + h  \|\nabla(u_h(t) - u(t))\|_{L^2(\Omega)} \le Ch^{2+\min(q,0)} \, \ell_h^{2} \|f\|_{L^\infty(0,t;\dH q)}.
\end{equation*}
Further, we study the more practical lumped mass scheme, and show the same
convergence rates for the gradient.
For right hand side $f\in L^\infty(0,T;\dH q)$, $-1<q<1$, on general
quasi-uniform meshes, we are only able to establish a suboptimal $L^2$-error bound of order $O(h^{1+q}\ell_h^2)$.
For a class of special triangulations satisfying condition \eqref{eqn:condQ}, (almost)
optimal estimates of order $O(h^{2+\min(q,0)})$ and $O(h^{2+\min(q,0)}\ell_h^2)$ hold in $L^2(0,T;L^2(\Omega))$- and
$L^\infty(0,T;L^2(\Omega))$-norm, respectively, cf. Theorems \ref{thm:lump:l2l2}
and \ref{thm:lump:linfl2}. These results extend related results for nonsmooth initial data
obtained in our earlier works \cite{JinLazarovPasciakZhou:2013,Bangti_LZ_2013}.

The rest of the paper is organized as follows. In Section \ref{sec:prelim},
we recall some preliminaries for the convergence analysis, including basic
properties of the Mittag-Leffler function, the smoothing property of \eqref{eq1},
and basic estimates for finite element projection operators. In Sections
\ref{sec:galerkin} and \ref{sec:lump}, we derive error estimates for the standard
Galerkin FEM and lumped mass FEM, respectively. Finally, in Section \ref{sec:numerics}
we present numerical results for both one- and two-dimensional examples, including non-smooth
and very weak data, which confirm our theoretical study. Throughout, we shall denote
by $C$ a generic constant, which may differ at different occurrences and may depend on $T$,
but it is always independent of the solution $u$ and the mesh size $h$.

\section{preliminaries}\label{sec:prelim}
In this part, we recall preliminaries for the convergence analysis, including the Mittag-Leffler function,
solution representation, stability estimates, and basic properties of the finite element projection operators.
\subsection{Mittag-Leffler function}
The Mittag-Leffler function $E_{\alpha,\beta}(z)$ is defined by
\begin{equation*}
  E_{\alpha,\beta}(z) = \sum_{k=0}^\infty \frac{z^k}{\Gamma(k\alpha+\beta)}\quad z\in \mathbb{C}
\quad \mbox{with}
\quad    \Gamma(z) = \int_0^\infty t^{z-1}e^{-t} dt \,\quad ~~\Re(z) >0.
\end{equation*}
The function $E_{\alpha,\beta}(z)$ generalizes the exponential function in that $E_{1,1}(z)= e^z$. The variant $E_{\alpha,\alpha}(z)$,
appears in the kernel of the solution representation of problem \eqref{eq1}; see \eqref{Duhamel} and
\eqref{Duhamel2} below. The following properties are essential in our analysis.
\begin{lemma}
\label{lem:mlfbdd}
Let $0<\alpha<2$ and $\beta\in\mathbb{R}$ be arbitrary, and $\frac{\alpha\pi}{2}
<\mu<\min(\pi,\alpha\pi)$. Then there exists a constant $C=C(\alpha,\beta,\mu)>0$ such
that for $\mu\leq|\mathrm{arg}(z)|\leq \pi$
\begin{equation}\label{M-L-bound}
  |E_{\alpha,\beta}(z)|\leq \left\{\begin{aligned}
    \frac{C}{1+|z|^2}, & \quad \beta-\alpha\in \mathbb{Z}^-\cup\{0\},\\
    \frac{C}{1+|z|},   &  \quad \mbox{otherwise}.
  \end{aligned}\right.
\end{equation}
Moreover, for $\lambda>0$, $\al>0$, and $t>0$ we have
\begin{equation}\label{eq:mlfdiff}
  \begin{aligned}
    \Dal E_{\al,1}(-\la t^\al)&=-\la E_{\al,1}(-\la t^\al) \quad  \mbox{and} \quad
    \frac{d}{dt} E_{\alpha,1}(-\la t^\al)  = -\la t^{\alpha-1}E_{\al.\al}(-\la t^\al).
  \end{aligned}
\end{equation}
\end{lemma}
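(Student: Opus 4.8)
The plan is to prove the two claims in Lemma \ref{lem:mlfbdd} by separate, classical routes: the decay bound \eqref{M-L-bound} from the Hankel-type integral representation of $E_{\alpha,\beta}$, and the differentiation identities \eqref{eq:mlfdiff} by termwise differentiation of the defining power series. For the bound \eqref{M-L-bound}, first I would recall the standard contour-integral representation
\begin{equation*}
  E_{\alpha,\beta}(z) = \frac{1}{2\pi i}\int_{\Gamma_\theta} \frac{\mu^{(1-\beta)/\alpha} e^{\mu^{1/\alpha}}}{\mu - z}\, d\mu,
\end{equation*}
valid for $z$ in the sector $|\mathrm{arg}(z)| > \theta$ with $\alpha\pi/2 < \theta < \mu < \min(\pi,\alpha\pi)$, where $\Gamma_\theta$ is the usual Hankel contour consisting of two rays $\{|\arg \mu| = \theta\}$ traversed outward/inward together with a small circular arc; see \cite[Ch.~1]{KilbasSrivastavaTrujillo:2006}. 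Splitting the contour into the two rays and the arc, bounding $|e^{\mu^{1/\alpha}}|$ by a decaying exponential on the rays (this uses $\theta < \alpha\pi$, so $\cos(\theta/\alpha) > \cos\pi = -1$ is not enough — one needs $\theta/\alpha < \pi$, i.e. the real part of $\mu^{1/\alpha}$ is controlled), and using $|\mu - z| \ge c(1+|z|)$ uniformly for $|\arg z|\ge \mu$, one obtains the $C/(1+|z|)$ bound after elementary estimates of the resulting one-dimensional integrals in $|\mu|$.

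For the sharper $C/(1+|z|^2)$ bound in the resonant case $\beta - \alpha \in \mathbb{Z}^-\cup\{0\}$, I would use the recursion $E_{\alpha,\beta}(z) = \frac{1}{z}\big(E_{\alpha,\beta-\alpha}(z) - \frac{1}{\Gamma(\beta-\alpha)}\big)$ (with the convention $1/\Gamma(m) = 0$ for non-positive integers $m$), which follows directly from the power series by shifting the summation index. When $\beta-\alpha$ is a non-positive integer the subtracted term vanishes, so $E_{\alpha,\beta}(z) = \frac{1}{z}E_{\alpha,\beta-\alpha}(z)$, and applying the already-established $C/(1+|z|)$ bound to $E_{\alpha,\beta-\alpha}$ gives an extra factor $|z|^{-1}$; combining this with the trivial bound near $z=0$ (where $E_{\alpha,\beta}$ is entire, hence bounded on compacta) yields $C/(1+|z|^2)$.

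Finally, the identities \eqref{eq:mlfdiff} are a direct computation. Differentiating $E_{\alpha,1}(-\lambda t^\alpha) = \sum_{k\ge 0} \frac{(-\lambda t^\alpha)^k}{\Gamma(k\alpha+1)}$ term by term (justified by local uniform convergence) gives $\frac{d}{dt}E_{\alpha,1}(-\lambda t^\alpha) = \sum_{k\ge 1}\frac{(-\lambda)^k \alpha k\, t^{\alpha k-1}}{\Gamma(k\alpha+1)} = -\lambda t^{\alpha-1}\sum_{k\ge 1}\frac{(-\lambda t^\alpha)^{k-1}}{\Gamma((k-1)\alpha+\alpha)} = -\lambda t^{\alpha-1}E_{\alpha,\alpha}(-\lambda t^\alpha)$, using $\alpha k/\Gamma(k\alpha+1) = 1/\Gamma(k\alpha)$ and reindexing. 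For the Caputo-derivative identity, I would apply $\Dal$ termwise using the standard formula $\Dal t^{\alpha k} = \frac{\Gamma(\alpha k+1)}{\Gamma(\alpha k +1-\alpha)} t^{\alpha k-\alpha}$ for $k\ge 1$ (and $\Dal (\text{const}) = 0$), which after reindexing collapses the series back to $-\lambda E_{\alpha,1}(-\lambda t^\alpha)$; this is exactly the statement that $E_{\alpha,1}(-\lambda t^\alpha)$ solves the scalar fractional ODE $\Dal u = -\lambda u$, $u(0)=1$.

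The main obstacle is the first bound: getting the decay uniform in $|\arg z|\in[\mu,\pi]$ with the correct constant requires care in choosing the contour angle $\theta$ strictly between $\alpha\pi/2$ and $\mu$ and in estimating the ray integrals $\int_0^\infty r^{(1-\beta)/\alpha} e^{r^{1/\alpha}\cos(\theta/\alpha)}\,(1+|z|)^{-1}\,dr$ — one must check the exponent $\cos(\theta/\alpha)$ is negative, which is where the hypothesis $\mu < \min(\pi,\alpha\pi)$ (hence $\theta/\alpha < \pi$, but actually one needs $\theta/\alpha$ strictly less than... careful: $\theta < \alpha\pi$ gives $\theta/\alpha < \pi$, and we need $> \pi/2$ for negativity, forced by $\theta > \alpha\pi/2$) is used decisively. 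The rest is bookkeeping.
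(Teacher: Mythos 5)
Your proposal is correct. For the record, the paper does not prove this lemma at all: it simply cites Kilbas--Srivastava--Trujillo (eq.\ (1.8.28), Lemma 2.33) and Podlubny (Theorem 1.4) for both claims, so any honest argument is ``different from the paper's.'' Your route is essentially the one used in those references --- the Hankel contour representation with the contour angle $\theta$ chosen in $(\alpha\pi/2,\mu)$ so that $\cos(\theta/\alpha)<0$, plus termwise differentiation of the series for \eqref{eq:mlfdiff} --- and your checks of the angle conditions and of the identities $\alpha k/\Gamma(k\alpha+1)=1/\Gamma(k\alpha)$ and $\Dal t^{\alpha k}=\frac{\Gamma(\alpha k+1)}{\Gamma(\alpha k+1-\alpha)}t^{\alpha(k-1)}$ are all sound. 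One genuinely nice deviation: the cited sources obtain the improved decay in the resonant case $\beta-\alpha\in\mathbb{Z}^-\cup\{0\}$ from the two-term asymptotic expansion $E_{\alpha,\beta}(z)=-z^{-1}/\Gamma(\beta-\alpha)+O(|z|^{-2})$, whereas you get it from the elementary series recursion $E_{\alpha,\beta}(z)=z^{-1}\bigl(E_{\alpha,\beta-\alpha}(z)-1/\Gamma(\beta-\alpha)\bigr)$ together with the already-proved generic bound applied to $E_{\alpha,\beta-\alpha}$; this avoids re-deriving higher-order asymptotics and is a cleaner way to exhibit where the hypothesis on $\beta-\alpha$ enters (through $1/\Gamma$ vanishing at nonpositive integers). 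Two cosmetic points: you reuse the letter $\mu$ both for the sector angle of the lemma and for the integration variable, and the uniform lower bound $|\mu-z|\ge c(1+|z|)$ on the contour really only needs to be established for $|z|$ large (for $|z|$ bounded the claimed estimates follow from continuity of the entire function on a compact set, as you note in the resonant case); neither affects correctness.
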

\begin{proof}
 The estimate \eqref{M-L-bound} can be found in  \cite[pp. 43, equation
(1.8.28)]{KilbasSrivastavaTrujillo:2006} or \cite[Theorem 1.4]{Podlubny_book}, while
 \eqref{eq:mlfdiff} is discussed in \cite[Lemma 2.33, equations
(2.4.58) and (1.10.7)]{KilbasSrivastavaTrujillo:2006}.
\end{proof}

\subsection{Solution representation}\label{ssec:represt}
For the solution representation to problem \eqref{eq1}, we first introduce some notation.
For $s\ge-1$, we denote by $\dH s\subset H^{-1}(\Om)$ the Hilbert space induced by the norm:
\begin{equation*}
  \|v\|_{\dH s}^2=\sum_{j=1}^{\infty}\la_j^s \langle v,\fy_j \rangle^2
\end{equation*}
with $\{\la_j\}_{j=1}^\infty$ and $\{\fy_j\}_{j=1}^\infty$ being respectively the eigenvalues and
the $L^2(\Omega)$-orthonormal eigenfunctions of the Laplace operator $-\Delta$ on the domain
$\Omega$ with a homogeneous Dirichlet boundary condition. As usual, we identify a function $f$
in $L^2(\Omega)$ with the functional $F$ in $H^{-1}(\Omega)$ defined by $\langle F,\phi\rangle =
(f,\phi)$, for all $\phi\in H^1_0(\Omega)$. Then $\{\fy_j\}_{j=1}^\infty$ and $\{\la_j^{1/2}
\fy_j\}_{j=1}^\infty$, form orthonormal basis in $L^2(\Omega)$ and $H^{-1}(\Omega)$, respectively.
Further, $\|v\|_{\dH 0}=\|v\|_{L^2(\Omega)}=(v,v)^{1/2}$ is the norm in $L^2(\Omega)$ and $\|v\|_{\dH {-1}}
= \|v\|_{H^{-1}(\Omega)}$ is the norm in $H^{-1}(\Omega)$. Besides, it is easy to verify that
$\|v\|_{\dH 1}= \|\nabla v\|_{L^2(\Omega)}$ is also the norm in $H_0^1(\Om)$
and  $\|v\|_{\dH 2}=\|\Delta v\|_{L^2(\Omega)}$ is equivalent to the norm in $H^2(\Om)\cap H^1_0(\Om)$
(cf. the proof of Lemma 3.1 of \cite{Thomee97}). Note that $\dot H^s(\Omega)$, $s\ge -1$ form a
Hilbert scale of interpolation spaces. Motivated by this, we denote $\|\cdot\|_{H^s(\Omega)}$ to
be the norm on the interpolation scale between $H^1_0(\Omega)$ and $L^2(\Omega)$ when $s$
is in $[0,1]$ and $\|\cdot\|_{H^{s}(\Omega)}$ to be the norm on the interpolation scale between
$L^2(\Omega)$ and $H^{-1}(\Omega)$ when $s$ is in $[-1,0]$.  Then, $\| \cdot \|_{H^s(\Omega)}$
and $\|\cdot\|_{\dH s}$ are equivalent for $s\in [-1,1]$ by interpolation.

For a Banach space $B$, we define the space
\begin{equation*}
  L^r(0,T;B) = \{u(t)\in B \mbox{ for a.e. } t\in (0,T) \mbox{ and } \|u\|_{L^r(0,T;B)}<\infty\},
\end{equation*}
for any $r\geq 1$, and the norm $\|\cdot\|_{L^r(0,T;B)}$ is defined by
\begin{equation*}
  \|u\|_{L^r(0,T;B)} = \left\{\begin{aligned}\left(\int_0^T\|u(t)\|_B^rdt\right)^{1/r}, &\quad r\in [1,\infty),\\
       \mathrm{esssup}_{t\in(0,T)}\|u(t)\|_B, &\quad r= \infty.
       \end{aligned}\right.
\end{equation*}

Now we give a representation of the solution to problem \eqref{eq1} using
the eigenpairs $\{(\la_j,\fy_j)\}_{j=1}^\infty$.
We define an operator $\bar E(t)$ for $\chi \in L^2(\Omega)$ by
\begin{equation}\label{Duhamel}
{\bar E}(t) \chi = \sum_{j=1}^\infty t^{\al-1} \Mitagaa(-\la_j t^\al)\,(\chi,\fy_j)\, \fy_j(x).
\end{equation}
Then by separation of variables we get the following representation of
the solution $u(x,t)$ to problem \eqref{eq1} for initial data $v=0$:
\begin{equation}\label{Duhamel2}
u(x,t)= \int_0^t  {\bar E}(t-s) f(s) ds. 
\end{equation}

Our first task is to find the weakest class of right hand side data $f(x,t)$ so that \eqref{Duhamel2} is indeed a
solution to the problem \eqref{eq1}. As we see below,
for any $f \in L^2(0,T;\dot H^s(\Omega))$, $-1 < s \le 1$ the function  $u(x,t)$ from
\eqref{Duhamel2} satisfies the differential equation as an element in the space
$L^2(0,T;\dot H^{s+2}(\Omega))$. However, it may not satisfy the homogeneous initial condition
$u(x,0)=0$. In Remark \ref{weakest_sol} we argue that the weakest class of source term that
produces a legitimate weak solution of \eqref{eq1} is $f \in L^r(0,T;\dot H^s(\Omega))$
with $r>1/\al$ and $-1 < s \le 1$. Obviously, for $1/2<\al <1$, the
representation \eqref{Duhamel2} does give a solution $u(x,t) \in L^2(0,T;\dot H^{s+2}(\Omega))$.

We begin with the  following important smoothing property of the solution operator $\bar{E}$.
\begin{lemma}\label{lem:E}
For any $t>0$, we have for $0\le p-q \le 4$
\begin{equation*}
     \|\bar E(t) \chi \|_{\dH p} \le Ct^{-1+\al(1+(q-p)/2)}\|\chi\|_{\dH q}.
\end{equation*}
\end{lemma}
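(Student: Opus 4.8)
The plan is to use the spectral representation \eqref{Duhamel} of $\bar E(t)$ and reduce the operator bound to a single, parameter-free inequality for the Mittag-Leffler function. By Parseval's identity,
\[
   \|\bar E(t)\chi\|_{\dH p}^2 = \sum_{j=1}^\infty \lambda_j^p\, t^{2(\al-1)}\, \bigl|E_{\al,\al}(-\lambda_j t^\al)\bigr|^2\, (\chi,\fy_j)^2 ,
\]
so it suffices to show, term by term, that
\[
   \lambda_j^p\, t^{2(\al-1)}\, \bigl|E_{\al,\al}(-\lambda_j t^\al)\bigr|^2 \le C\, t^{\,2(-1+\al(1+(q-p)/2))}\, \lambda_j^q .
\]
Dividing and collecting the powers of $t$ (the exponents combine to $t^{\al(p-q)}$), this is equivalent to the claim that
\[
   \bigl(\lambda_j t^\al\bigr)^{p-q}\, \bigl|E_{\al,\al}(-\lambda_j t^\al)\bigr|^2 \le C ,
\]
which only involves the single positive quantity $x:=\lambda_j t^\al$.

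The second step is to estimate this scalar expression with Lemma \ref{lem:mlfbdd}, taking $\beta=\al$. Since $\beta-\al=0\in\mathbb Z^-\cup\{0\}$, the sharper (quadratic) alternative of \eqref{M-L-bound} is available: for $0<\al<1$ one may choose $\mu$ with $\tfrac{\al\pi}{2}<\mu<\al\pi<\pi$, and as $z=-x$ with $x>0$ we have $|\arg(z)|=\pi\in[\mu,\pi]$, whence $|E_{\al,\al}(-x)|\le C/(1+x^2)$. Therefore $x^{p-q}|E_{\al,\al}(-x)|^2 \le C^2\, x^{p-q}/(1+x^2)^2$, and for $0\le p-q\le 4$ the elementary bound $x^{p-q}\le 1+x^4\le (1+x^2)^2$ makes the right-hand side uniformly bounded in $x$. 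Summing over $j$ and taking square roots gives the lemma.

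I do not expect a real obstacle here; the only point that requires care is that one must invoke the quadratic-decay branch of \eqref{M-L-bound}, which is legitimate precisely because the Mittag-Leffler index appearing in \eqref{Duhamel} is $E_{\al,\al}$ with second parameter equal to $\al$ --- this is exactly what allows the exponent difference $p-q$ to run all the way up to $4$ rather than only to $2$. One also checks, as above, that the negative real axis lies in the sector of validity of \eqref{M-L-bound}, which holds since $\al<1$ forces the admissible $\mu$ to be strictly less than $\pi$.
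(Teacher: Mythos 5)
Your argument is correct and is essentially identical to the paper's own proof: both reduce the claim via the spectral representation \eqref{Duhamel} to the uniform bound $(\la_j t^\al)^{p-q}|E_{\al,\al}(-\la_j t^\al)|^2\le C$ and then invoke the quadratic-decay branch of \eqref{M-L-bound} (valid since $\beta-\al=0$) together with the elementary estimate $x^{p-q}\le(1+x^2)^2$ for $0\le p-q\le 4$. Your explicit verification that the negative real axis lies in the admissible sector and that the second Mittag-Leffler parameter equals $\al$ is a welcome bit of extra care that the paper leaves implicit.
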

\begin{proof}
The definition of $\bar{E}$ in \eqref{Duhamel} and Lemma \ref{lem:mlfbdd} yield
\begin{equation*}
\begin{split}
\|\bar E(t) \chi \|_{\dH p}^2 & =\sum_{j=1}^{\infty}\la_j^p  |t^{\al-1}\Mitagaa(-\la_j t^\al)|^2 |(\chi,\fy_j)|^2\\
      &=t^{-2+(2+q-p)\al}\sum_{j=1}^{\infty} (\la_j t^{\al})^{p-q}  |\Mitagaa(-\la_j t^\al)|^2 \la_j^q |(\chi,\fy_j)|^2\\
      &\le C t^{-2+(2+q-p)\al}\sum_{j=1}^{\infty} \frac{(\la_j t^{\al})^{p-q}}{(1+(\la_jt^{\al})^2)^2} \la_j^q |(\chi,\fy_j)|^2\\
      &\le C t^{-2+(2+q-p)\al}\sum_{j=1}^{\infty} \la_j^q |(\chi,\fy_j)|^2 \le C t^{-2+(2+q-p)\al}\|\chi\|_{\dH q},
\end{split}
\end{equation*}
where in the last line we have used the inequality
$\sup_{j}\frac{(\la_j t^{\al})^{p-q}}{(1+(\la_jt^{\al})^2)^2} \le C$ for $0\le p-q \le 4$.
From this inequality the desired estimate follows immediately.
\end{proof}

Next we state some stability estimates for the solution $u$ to problem \eqref{eq1} for $f\in
L^\infty(0,T;\dH q)$, $-1<q\leq 1$. These estimates will be essential for the convergence
analysis of the standard Galerkin FEM in Section \ref{sec:galerkin}. The first
estimate in Theorem \ref{thm:reg} in the case $q=0$ was already established in \cite[Theorem 2.1, part (i)]{Sakamoto_2011}.
Below it is extended for the whole range of $q$.
\begin{theorem}\label{thm:reg}
Assume that $f\in L^2(0,T;\dH q)$, $-1<q \le 1$. Then the expression \eqref{Duhamel2} represents a function
$u\in L^2(0,T;\dot H^{q+2}(\Omega))$ which satisfies the differential equation in \eqref{eq1} and the estimate:
\begin{equation}\label{eq:reg}
  \|u\|_{L^2(0,T;\dH {q+2})}+\|\partial_t^\alpha u\|_{L^2(0,T;\dH {q})}\leq C\|f\|_{L^2(0,T;\dH q)}.
\end{equation}
If $f\in L^\infty(0,T;\dH q)$, $-1<q \le 1$, then the function $u(x,t)$ belongs to
$ L^{\infty}(0,T;\dot H^{q+2-\epsilon}(\Omega))$ and satisfies 
\begin{equation}\label{eq:regeps}
    \|u(\cdot,t)\|_{\dH {q+2-\epsilon}} \leq C\epsilon^{-1}t^{\ep\al/2}\|f\|_{L^\infty(0,t;\dH q)}  \quad \mbox{ for any} \quad \epsilon>0  .
\end{equation}
Hence, \eqref{Duhamel2} is a solution to the initial value problem \eqref{eq1}
with a homogeneous initial data $v=0$.
\end{theorem}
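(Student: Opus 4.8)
The plan is to work throughout with the eigenfunction expansion. Expanding $f(s)=\sum_j f_j(s)\fy_j$ with $f_j(s)=(f(s),\fy_j)$, the representation \eqref{Duhamel2} becomes $u(x,t)=\sum_j u_j(t)\fy_j(x)$, where $u_j=k_j*f_j$ on $(0,T)$ with the scalar kernel $k_j(t):=t^{\al-1}E_{\al,\al}(-\la_j t^\al)$. Since $\|u\|_{L^2(0,T;\dH{q+2})}^2=\sum_j\la_j^{q+2}\|u_j\|_{L^2(0,T)}^2$, I would reduce the first bound in \eqref{eq:reg} to a bound, uniform in $j$, on the scalar convolution operator $f_j\mapsto k_j*f_j$. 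The one genuine obstacle is that Lemma \ref{lem:E} applied with $p=q+2$ gives merely $\|\bar E(t)\|_{\dH q\to\dH{q+2}}\le Ct^{-1}$, a kernel not in $L^1(0,T)$, so a vector-valued Young inequality is not directly available; I would circumvent this by staying at the level of a single Fourier mode, where the Mittag--Leffler factor supplies the missing temporal decay.

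Concretely, because $\beta-\al=0$ here, Lemma \ref{lem:mlfbdd} gives $|E_{\al,\al}(-\la_j t^\al)|\le C/(1+(\la_j t^\al)^2)$, and the substitution $r=\la_j t^\al$ yields
\[
\|k_j\|_{L^1(0,T)}\le C\int_0^\infty\frac{t^{\al-1}}{1+(\la_j t^\al)^2}\,dt=\frac{C}{\al\la_j}\int_0^\infty\frac{dr}{1+r^2}=\frac{C}{\la_j}.
\]
By Young's inequality $\|u_j\|_{L^2(0,T)}\le C\la_j^{-1}\|f_j\|_{L^2(0,T)}$, and summing against $\la_j^{q+2}$ gives $\|u\|_{L^2(0,T;\dH{q+2})}\le C\|f\|_{L^2(0,T;\dH q)}$, so in particular $u\in L^2(0,T;\dH{q+2})$. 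For the differential equation I would take the Laplace transform in $t$: using $\mathcal L[k_j](z)=(z^\al+\la_j)^{-1}$ and the Caputo rule $\mathcal L[\Dal u_j](z)=z^\al\mathcal L[u_j](z)-z^{\al-1}u_j(0)$ with $u_j(0)=0$, one obtains $\mathcal L[\Dal u_j]+\la_j\mathcal L[u_j]=(z^\al+\la_j)\mathcal L[k_j]\mathcal L[f_j]=\mathcal L[f_j]$, hence $\Dal u_j+\la_j u_j=f_j$ for every $j$. Since $u\in L^2(0,T;\dH{q+2})$ and $\Delta u+f\in L^2(0,T;\dH q)$, summing these scalar identities (with convergence in $L^2(0,T;\dH q)$) shows $u$ satisfies $\Dal u-\Delta u=f$ in $L^2(0,T;\dH q)$; the bound on $\|\Dal u\|_{L^2(0,T;\dH q)}$ in \eqref{eq:reg} then follows from $\Dal u=\Delta u+f$ and the estimate just established.

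For the $L^\infty$-in-time statement the idea is to trade $\ep$ spatial derivatives for integrability of the temporal kernel. Applying Lemma \ref{lem:E} with $p=q+2-\ep$ (so $p-q=2-\ep\in[0,4]$ for $0<\ep\le2$), the temporal exponent is $-1+\al(1+(q-p)/2)=-1+\al\ep/2>-1$, so $\|\bar E(t-s)f(s)\|_{\dH{q+2-\ep}}\le C(t-s)^{-1+\al\ep/2}\|f(s)\|_{\dH q}$; integrating over $s\in(0,t)$ and using $\int_0^t(t-s)^{-1+\al\ep/2}\,ds=\frac{2}{\al\ep}t^{\al\ep/2}$ gives
\[
\|u(\cdot,t)\|_{\dH{q+2-\ep}}\le C\ep^{-1}t^{\al\ep/2}\|f\|_{L^\infty(0,t;\dH q)},
\]
which is \eqref{eq:regeps}, whence $u\in L^\infty(0,T;\dH{q+2-\ep})$. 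Finally, since $q>-1$ one may pick $\ep$ small enough that $q+2-\ep>0$; then \eqref{eq:regeps} gives $\|u(\cdot,t)\|_{L^2(\Om)}\le C_\ep t^{\al\ep/2}\to0$ as $t\to0^+$, which identifies $u(\cdot,0)=0$ and confirms that \eqref{Duhamel2} solves \eqref{eq1} with $v=0$. In summary, the substantive point in both parts is the borderline temporal singularity of $\bar E$ at the origin: in the $L^2$-in-time estimate it is defeated by the sharp per-mode bound $\|k_j\|_{L^1(0,T)}\le C\la_j^{-1}$, and in the $L^\infty$-in-time estimate by the loss of $\ep$ derivatives, which is exactly what forces the unavoidable factor $\ep^{-1}$.
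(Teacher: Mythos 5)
Your proof is correct and follows essentially the same route as the paper: a per-mode (eigenfunction) reduction with the key bound $\|k_j\|_{L^1(0,T)}\le C\lambda_j^{-1}$ followed by Young's inequality for convolutions, and an application of Lemma \ref{lem:E} with $p=q+2-\epsilon$ to get \eqref{eq:regeps} and the vanishing initial value. The only cosmetic differences are that the paper derives the kernel bound by exact integration using complete monotonicity, namely $\int_0^t\lambda_n s^{\alpha-1}E_{\alpha,\alpha}(-\lambda_n s^\alpha)\,ds=1-E_{\alpha,1}(-\lambda_n t^\alpha)\le 1$, rather than from the decay estimate \eqref{M-L-bound} plus substitution, and it verifies the per-mode equation by citing a differentiation formula for the Caputo derivative of the convolution (bounding $\|\partial_t^\alpha u\|$ first and then $\|\Delta u\|$ by the triangle inequality) instead of your Laplace-transform computation, which reverses that order but yields the same estimate.
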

\begin{proof}
By the complete monotonicity of the function $E_{\alpha,1}(-t^\alpha)$
(with $\alpha\in (0,1)$) \cite[Lemma 3.3]{Sakamoto_2011}, i.e.,
\begin{equation*}
   (-1)^n\frac{d^n}{dt^n}E_{\alpha,1}(-t^\alpha)\geq 0 \quad\mbox{ for all } t>0,\ \ n=0,1,\ldots,
\end{equation*}
and the differentiation formula in Lemma \ref{lem:mlfbdd}, we deduce
$E_{\alpha,\alpha}(-\eta)\geq0$, $\eta\geq0$. Therefore, for $t>0$
\begin{equation}\label{ineq1}
  \begin{aligned}
    \int_0^t |t^{\alpha-1}E_{\alpha,\alpha}(-\lambda_n t^\alpha)|dt & = \int_0^t t^{\alpha-1} E_{\alpha,\alpha}(-\lambda_n t^\alpha)dt \\
       &  = -\frac{1}{\lambda_n}\int_0^t \frac{d}{dt}E_{\alpha,1}(-\lambda_n t^\alpha)dt\\
       &  = \frac{1}{\lambda_n}(1-E_{\alpha,1}(-\lambda_nt^\alpha)) \le \frac{1}{\lambda_n}.
  \end{aligned}
\end{equation}
Meanwhile, by the differentiation formula \cite[pp. 140-141]{KilbasSrivastavaTrujillo:2006}, we get
\begin{equation*}
  \begin{aligned}
     A_n:=& \partial_t^\alpha \int_0^t(f(\cdot,\tau),\fy_n)(t-\tau)^{\alpha-1}E_{\alpha,\alpha}(-\lambda_n(t-\tau)^\alpha)d\tau\\
     =& (f(\cdot,t),\fy_n) -\lambda_n \int_0^t(f(\cdot,\tau),\fy_n)(t-\tau)^{\alpha-1}E_{\alpha,\alpha}(-\lambda_n(t-\tau)^\alpha)d\tau .
  \end{aligned}
\end{equation*}
Now by means of Young's inequality for convolution, we deduce
\begin{equation*}
  \begin{aligned}
   \|A_n\|_{L^2(0,T)}^2 &\leq C_1\int_0^T|(f(\cdot,t),\fy_n)|^2 dt + C_2\int_0^T |(f(\cdot,t),\fy_n)|^2 dt\left(\int_0^T|\lambda_nt^{\alpha-1}E_{\alpha,\alpha}(-\lambda_nt^\alpha)|dt\right)^2\\
   &\leq C\int_0^T|(f(\cdot,t),\fy_n)|^2 \, dt.
  \end{aligned}
\end{equation*}
Thus there holds
\begin{equation*}
  \begin{aligned}
    \|\partial_t^\alpha u \|_{L^2(0,T;\dH q)}^2 &= \sum_{n=1}^\infty \int_0^T\lambda_n^q|\partial_t^\alpha\int_0^t(f(\cdot,\tau),\fy_n)(t-\tau)^{\alpha-1}E_{\alpha,\alpha}(-\lambda_n(t-\tau)^{\alpha})d\tau|^2 dt \\
    &\leq C\sum_{n=1}^\infty \int_0^T\lambda_n^q|(f(\cdot,t),\phi_n)|^2 dt= C\|f\|_{L^2(0,T;\dH q)}^2.
  \end{aligned}
\end{equation*}
Now using equation \eqref{eq1} and the triangle inequality, we also get $\|\Delta u\|_{L^2(0,T;\dH q)}
\leq C\|f\|_{L^2(0,T;\dot{H}^q(\Omega))}$. This shows the first assertion. By Lemma \ref{lem:E}
we have
\begin{equation*}
  \begin{aligned}
    \|u(\cdot,t)\|_{\dH {q+2-\ep}} & = \|\int_0^t \bar{E}(t-s)f(s)ds\|_{\dH {q+2-\ep}}
      \leq \int_0^t \|\bar{E}(t-s)f(s)\|_{\dH {q+2-\ep}} ds \\
      & \leq C\int_0^t (t-s)^{\ep\al/2-1} \|f(s)\|_{\dH q}ds
      \leq C\epsilon^{-1}t^{\ep\al/2}\|f\|_{L^\infty(0,t;\dot H^q(\Omega))}
  \end{aligned}
\end{equation*}
which shows estimate \eqref{eq:regeps}. Finally, it is follows directly from this
that the representation $u$ satisfies also the initial condition $u(0)=0$, i.e.,
for any $\ep>0$, $\lim_{t\to 0 }\|u(\cdot,t)\|_{\dH {q+2-\ep}}=0$, and thus it is indeed a
solution of the initial value problem \eqref{eq1}.
\end{proof}

\begin{remark}
The first estimate in Theorem \ref{thm:reg} can be improved to
\begin{equation*}
  \|u\|_{L^r(0,T;\dH {q+2})}+\|\partial_t^\alpha u\|_{L^r(0,T;\dot H^{q})}\leq C\|f\|_{L^r(0,T;\dH q)}
\end{equation*}
for any $r\in(1,\infty)$. The proof is essentially identical. The $\epsilon$ factor in the second estimate
reflects the limited smoothing property of the fractional differential operator, resulting
from the slow decay of the Mittag-Leffler function $E_{\alpha,\alpha}(-z)$.
\end{remark}

\begin{remark}\label{weakest_sol}
The condition $f\in  L^\infty(0,T;\dH q)$ can be weakened
to $f\in L^r(0,T;\dH q)$ with $r>1/\alpha$. This follows from Lemma \ref{lem:E} and
the Cauchy-Schwarz inequality with $r'$, $1/r' + 1/r=1$
\begin{equation*}
  \begin{aligned}
    \|u(\cdot,t)\|_{\dH q} & \leq \int_0^t \|\bar{E}(t-s)f(s)\|_{\dH {q}} ds
       \leq \int_0^t (t-s)^{\alpha-1} \|f(s)\|_{\dH q}ds \\
     &  \leq \tfrac{C}{1+r'(\alpha-1)}t^{1+r'(\alpha-1)}\|f\|_{L^r(0,t;\dot H^q(\Omega))},
  \end{aligned}
\end{equation*}
where $1+r'(\alpha-1)>0$ by the condition $r>1/\alpha$. It follows from this that the initial
condition $u(x,0)=0$ holds in the following sense: $\lim_{t\to0^+}\|u(t)\|_{\dH q}=0$.
Hence for any $\alpha\in (1/2,1)$ the representation formula \eqref{Duhamel2} remains a
legitimate solution under the weaker condition $f\in L^2(0,T;\dH q)$.
\end{remark}

\begin{remark}\label{L-infinity}
For the ease of exposition, in the error analysis we shall restrict our discussion
to the case $f\in L^\infty(0,T;\dH q)$. Nonetheless, we note that for $p=0,1$ the
$L^2(0,T;\dH p)$-norm estimate of the error below remain valid under the weakened
regularity condition on the source term $f$.
\end{remark}

\subsection{Ritz and $L^2$-orthogonal  projections}
In our analysis we shall also use the $L^2$-orthogonal projection $P_h:L^2(\Omega)\to X_h$ and
the Ritz projection $R_h:H^1_0(\Omega)\to X_h$, respectively, defined by
\begin{equation*}
  \begin{aligned}
    (P_h \psi,\chi) & =(\psi,\chi) \quad\forall \chi\in X_h,\\
    (\nabla R_h \psi,\nabla\chi) & =(\nabla \psi,\nabla\chi) \quad \forall \chi\in X_h.
  \end{aligned}
\end{equation*}

We shall need some properties of the Ritz projection $R_h$ and the $L^2$-projection $P_h$ \cite{JinLazarovPasciakZhou:2013}.
\begin{lemma}\label{lem:prh-bound}
Let the mesh be quasi-uniform. Then the operator $R_h$ satisfies:
\begin{eqnarray*}
   \|R_h \psi-\psi\|_{L^2(\Omega)}+h\|\nabla(R_h \psi-\psi)\|_{L^2(\Omega)}\le Ch^q\| \psi\|_{\dH q}\for \psi \in \dH q, \ q=1,2.
\end{eqnarray*}
Further, for $s\in [0,1]$ we have
\begin{equation*}
  \begin{aligned}
     \|(I-P_h)\psi \|_{H^s(\Omega)} &\le Ch^{2-s} \|\psi\|_{\dH 2} \for \psi\in
       H^2(\Omega)\cap H^1_0(\Omega),\\
     \|(I-P_h)\psi \|_{H^s(\Omega)} &\le Ch^{1-s} \|\psi\|_{\dH 1} \for \psi\in H^1_0(\Omega).
  \end{aligned}
\end{equation*}
In addition, by duality  $P_h$ is stable on $\dH s$ for $s\in [-1,0]$.
\end{lemma}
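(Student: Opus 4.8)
The plan is to prove the three assertions by standard finite element arguments, relying on Galerkin (respectively $L^2$-) orthogonality, the approximation properties of $X_h$ on quasi-uniform meshes, elliptic $H^2$-regularity on the convex polygon $\Omega$, and interpolation of the Sobolev scale $\{H^s(\Omega)\}$. For the Ritz projection I would treat the two norms separately. Galerkin orthogonality $(\nabla(R_h\psi-\psi),\nabla\chi)=0$ for all $\chi\in X_h$ gives the quasi-optimality
\[
  \|\nabla(R_h\psi-\psi)\|_{L^2(\Omega)}=\inf_{\chi\in X_h}\|\nabla(\psi-\chi)\|_{L^2(\Omega)};
\]
taking $\chi=0$ bounds the right-hand side by $\|\psi\|_{\dH 1}$, while taking $\chi$ to be the Lagrange interpolant of $\psi$ (well defined for $\psi\in H^2(\Omega)$ since $d\le 3$) and using the standard interpolation error estimate on $X_h$ bounds it by $Ch\|\psi\|_{\dH 2}$; hence $\|\nabla(R_h\psi-\psi)\|_{L^2(\Omega)}\le Ch^{q-1}\|\psi\|_{\dH q}$ for $q=1,2$. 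For the $L^2$-norm I would use the Aubin--Nitsche duality trick: let $w\in H^1_0(\Omega)$ solve $-\Delta w=R_h\psi-\psi$, so that convexity of $\Omega$ yields $\|w\|_{\dH 2}\le C\|R_h\psi-\psi\|_{L^2(\Omega)}$; then Galerkin orthogonality gives $\|R_h\psi-\psi\|_{L^2(\Omega)}^2=(\nabla(R_h\psi-\psi),\nabla(w-R_hw))$, and Cauchy--Schwarz combined with the $H^1$ bound just proved, applied to each of the two factors, yields $\|R_h\psi-\psi\|_{L^2(\Omega)}\le Ch^{q}\|\psi\|_{\dH q}$. Adding the two estimates proves the first claim.

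For the $L^2$-projection and $s\in[0,1]$, the best-approximation property of $P_h$ in $L^2(\Omega)$ together with the approximation property of $X_h$ (using a quasi-interpolant, e.g.\ the Scott--Zhang operator, when $\psi\in H^1_0(\Omega)$ only) gives $\|(I-P_h)\psi\|_{L^2(\Omega)}\le Ch^2\|\psi\|_{\dH 2}$ for $\psi\in H^2(\Omega)\cap H^1_0(\Omega)$ and $\|(I-P_h)\psi\|_{L^2(\Omega)}\le Ch\|\psi\|_{\dH 1}$ for $\psi\in H^1_0(\Omega)$. For the $H^1$-norm I would invoke the $H^1$-stability of $P_h$ on quasi-uniform meshes, $\|\nabla P_h\psi\|_{L^2(\Omega)}\le C\|\nabla\psi\|_{L^2(\Omega)}$, and the inverse inequality on $X_h$: writing $(I-P_h)\psi=(\psi-\chi)+(\chi-P_h\psi)$ with $\chi\in X_h$ a quasi-interpolant of $\psi$, the second term is controlled by $Ch^{-1}\|\chi-P_h\psi\|_{L^2(\Omega)}\le Ch^{-1}(\|\psi-\chi\|_{L^2(\Omega)}+\|(I-P_h)\psi\|_{L^2(\Omega)})$, which gives $\|\nabla(I-P_h)\psi\|_{L^2(\Omega)}\le Ch\|\psi\|_{\dH 2}$ and $\le C\|\psi\|_{\dH 1}$ in the two cases. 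Since $H^s(\Omega)$ is the interpolation space of exponent $s$ between $L^2(\Omega)$ and $H^1_0(\Omega)$, interpolating the $s=0$ and $s=1$ bounds yields $\|(I-P_h)\psi\|_{H^s(\Omega)}\le Ch^{2-s}\|\psi\|_{\dH 2}$ and $\le Ch^{1-s}\|\psi\|_{\dH 1}$ for all $s\in[0,1]$.

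Finally, for $s\in[-1,0]$: at $s=0$, $P_h$ is an orthogonal projection in $L^2(\Omega)$, hence a contraction; at $s=-1$, self-adjointness of $P_h$ and the $H^1$-stability above give $\|P_h\psi\|_{\dH{-1}}=\sup_{0\ne\phi\in\dH 1}(P_h\psi,\phi)/\|\phi\|_{\dH 1}=\sup_{0\ne\phi\in\dH 1}(\psi,P_h\phi)/\|\phi\|_{\dH 1}\le C\|\psi\|_{\dH{-1}}$ for $\psi\in L^2(\Omega)$, so $P_h$ extends by density to a bounded operator on $\dH{-1}$, and interpolation between $s=-1$ and $s=0$ gives stability on $\dH s$ for all $s\in[-1,0]$. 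I expect no real difficulty here: all of this is classical bookkeeping, and the only nonelementary ingredient --- the sole place where quasi-uniformity is genuinely needed --- is the $H^1$-stability of the $L^2$-projection, $\|\nabla P_h\psi\|_{L^2(\Omega)}\le C\|\nabla\psi\|_{L^2(\Omega)}$, which can fail on strongly graded meshes; I would simply cite this standard fact (see, e.g., \cite{Thomee97}, and also \cite{JinLazarovPasciakZhou:2013} for the present formulation), after which the rest follows from Galerkin orthogonality, the inverse inequality, the approximation properties of $X_h$, elliptic regularity on the convex polygon, and interpolation.
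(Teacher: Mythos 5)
Your proof is correct. The paper itself states this lemma without proof, simply citing \cite{JinLazarovPasciakZhou:2013}, and your argument --- quasi-optimality plus Aubin--Nitsche duality for $R_h$, best approximation plus the inverse inequality for the $H^1$-bound on $(I-P_h)\psi$, interpolation in $s$, and self-adjointness plus duality for the stability of $P_h$ on $\dH{s}$ with $s\in[-1,0]$ --- is exactly the standard chain of reasoning underlying that citation, with quasi-uniformity entering precisely where you say it does (the global inverse inequality, equivalently the $H^1$-stability of $P_h$).
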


\section{Semidiscrete Galerkin FEM}\label{sec:galerkin}

\subsection{Finite element method}\label{ssec:fem}
The semidiscrete Galerkin FEM for problem \eqref{eq1} with $v=0$ is: find $u_h \in X_h$ such that
\begin{equation}\label{fem}
\begin{split}
 {( \Dal u_{h},\chi)}+ a(u_h,\chi)&= {(f, \chi)},
\quad \forall \chi\in X_h,\ T \ge t >0 \quad  \mbox{and} \quad u_h(0)=0.
\end{split}
\end{equation}

Upon introducing the discrete Laplace operator $\De_h: X_h\to X_h$
\begin{equation}\label{eqn:Delh}
  -(\De_h\psi,\chi)=(\nabla\psi,\nabla\chi)\quad\forall\psi,\,\chi\in X_h,
\end{equation}
the spatial discrete problem can be written as
\begin{equation}\label{fem-operator}
   \Dal u_{h}(t)-\De_h u_h(t) =f_h(t) \for t\ge0 \quad \mbox{with} \quad  u_h(0)=0,
\end{equation}
where the discrete source term $f_h=P_h f$.
Then the solution of \eqref{fem-operator} can be represented by the
eigenpairs $\{(\la^h_j,\fy_j^h)\}_{j=1}^{\NN}$ of the discrete Laplacian
$-\De_h$. Now we introduce the discrete analogue $\bar{E}_h $
of the solution operator $\bar{E}$ defined in \eqref{Duhamel} for $t>0 $:
\begin{equation}\label{E-tilde}
  \Etilh(t) f_h= \sum_{j=1}^\NN t^{\al-1} \Mitagaa(-\la^h_jt^\al)\,(f_h,\fy^h_j) \, \fy_j^h.
\end{equation}
Then the solution $u_h(x,t)$ of the discrete problem \eqref{fem-operator} can be expressed by:
\begin{equation}\label{Duhamel_o}
     u_h(x,t)=  
     \int_0^t \Etilh(t-s) f_h(s)\,ds.
\end{equation}

Now we define the discrete norm $\normh{\cdot}{p}$ on $X_h$  for any $p\in\mathbb{R}$
\begin{equation}\label{eqn:normhp}
  \normh{\psi}{p}^2 =
      \sum_{j=1}^N(\la_j^h)^p(\psi,\fy_j^h)^2\quad \psi\in X_h.
\end{equation}
Analogously, we introduce the associated spaces $\tribar \cdot\tribar_{L^r(0,T;\dH p)}$, $r\in[1,\infty]$, on the space $X_h$.

We have the following norm equivalence and inverse inequality.
\begin{lemma}\label{lem:inverse}
For all $\psi \in X_h$ and any real $l>s$
\begin{equation*}
  \normh{ \psi}{l}\le Ch^{s-l}\normh{\psi}{s}.
\end{equation*}
For any $s\in [-1,1]$, the norms $\normh{\cdot}{s}$ and $\|\cdot\|_{\dH s}$ are equivalent on $X_h$.
\end{lemma}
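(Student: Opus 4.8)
The plan is to prove both assertions by exploiting the spectral decomposition of $X_h$ in terms of the discrete eigenpairs $\{(\la_j^h,\fy_j^h)\}_{j=1}^N$ of $-\De_h$, together with two standard ingredients: the classical inverse inequality $\|\nabla\chi\|_{L^2(\Omega)}\le Ch^{-1}\|\chi\|_{L^2(\Omega)}$ valid on quasi-uniform meshes (see \cite{Thomee97}), and the approximation properties of $P_h$ and $R_h$ recorded in Lemma \ref{lem:prh-bound}. For the inverse inequality in the discrete norms, I would first establish it for the integer pair $(l,s)=(1,0)$. Indeed, for $\psi\in X_h$ we have $\normh{\psi}{1}^2=\sum_j \la_j^h(\psi,\fy_j^h)^2 = -(\De_h\psi,\psi)=(\nabla\psi,\nabla\psi)=\|\nabla\psi\|_{L^2(\Omega)}^2$, and $\normh{\psi}{0}^2=\sum_j(\psi,\fy_j^h)^2=\|\psi\|_{L^2(\Omega)}^2$ since $\{\fy_j^h\}$ is $L^2$-orthonormal; hence the classical inverse inequality gives $\normh{\psi}{1}\le Ch^{-1}\normh{\psi}{0}$. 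Equivalently, $\la_N^h\le Ch^{-2}$, since $\la_N^h$ is the largest discrete eigenvalue and the Rayleigh quotient $\normh{\psi}{1}^2/\normh{\psi}{0}^2$ attains it. From $\la_j^h\le\la_N^h\le Ch^{-2}$ for all $j$, the general case follows immediately: for any real $l>s$,
\begin{equation*}
  \normh{\psi}{l}^2=\sum_{j=1}^N(\la_j^h)^{l-s}(\la_j^h)^s(\psi,\fy_j^h)^2\le (\la_N^h)^{l-s}\normh{\psi}{s}^2\le (Ch^{-2})^{l-s}\normh{\psi}{s}^2,
\end{equation*}
which is the claimed bound $\normh{\psi}{l}\le Ch^{s-l}\normh{\psi}{s}$.

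For the norm equivalence on $X_h$ for $s\in[-1,1]$, I would proceed by first handling the endpoint and midpoint cases $s=0,\pm1$ and then interpolating. The case $s=0$ is trivial, as noted above: $\normh{\psi}{0}=\|\psi\|_{L^2(\Omega)}=\|\psi\|_{\dH 0}$. For $s=1$ we again have the exact identity $\normh{\psi}{1}=\|\nabla\psi\|_{L^2(\Omega)}=\|\psi\|_{\dH 1}$ for $\psi\in X_h\subset H_0^1(\Omega)$. For $s=-1$, one uses duality: $\normh{\psi}{-1}=\sup_{\chi\in X_h}\frac{(\psi,\chi)}{\normh{\chi}{1}}=\sup_{\chi\in X_h}\frac{(\psi,\chi)}{\|\chi\|_{\dH 1}}$, while $\|\psi\|_{\dH{-1}}=\sup_{w\in H_0^1(\Omega)}\frac{(\psi,w)}{\|w\|_{\dH 1}}$; the inequality $\normh{\psi}{-1}\le\|\psi\|_{\dH{-1}}$ is immediate from $X_h\subset H_0^1(\Omega)$, and the reverse inequality is obtained by inserting $w=R_h w\in X_h$ (or $w=P_h w$ and using $H^1$-stability of $P_h$ on quasi-uniform meshes) and using $\|R_h w\|_{\dH 1}\le\|w\|_{\dH 1}$ together with $(\psi,w)=(\psi,P_h w)$ when $\psi\in X_h$; this gives $\|\psi\|_{\dH{-1}}\le C\normh{\psi}{-1}$. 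Once the three anchor points $s=-1,0,1$ are established, the equivalence for intermediate $s\in[-1,1]$ follows because both scales $\{\normh{\cdot}{s}\}$ and $\{\|\cdot\|_{\dH s}\}$ are defined by interpolation (on $X_h$ via the discrete spectral powers, and on functions via the continuous Hilbert scale, which the excerpt has already recorded), and interpolation of two equivalent pairs of Hilbert norms preserves equivalence with constants depending only on the endpoint constants.

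The main subtlety, and the only place where quasi-uniformity is genuinely used beyond the classical inverse inequality, is the $s=-1$ endpoint: one must produce a bounded projection from $H_0^1(\Omega)$ into $X_h$ that does not inflate the $\dH 1$-norm, so that negative-norm duality on $X_h$ matches negative-norm duality on all of $H_0^1(\Omega)$. The Ritz projection $R_h$ is tailor-made for this since $\|\nabla R_h w\|_{L^2}\le\|\nabla w\|_{L^2}$ by its defining orthogonality, so there is actually no real obstacle here — the argument is clean once one remembers to test against $R_h w$ rather than $w$ itself. I expect the whole proof to be short; the only thing to be careful about is bookkeeping the interpolation constants and making sure the claim "$\dot H^s(\Omega)$ form a Hilbert scale" is invoked exactly as stated in Section \ref{sec:prelim} so that the interpolation step is legitimate for the full range $s\in[-1,1]$.
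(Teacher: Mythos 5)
Your proposal is correct and follows essentially the same route as the paper: the paper simply cites \cite[Lemma 3.3]{Bangti_LZ_2013} for the inverse estimate (which is proved there exactly as you do, via $\normh{\psi}{0}=\|\psi\|_{L^2(\Omega)}$, $\normh{\psi}{1}=\|\nabla\psi\|_{L^2(\Omega)}$ and $\la_N^h\le Ch^{-2}$), and for the equivalence it notes the identities at $s=0,1$, interpolates on $[0,1]$, and passes to $[-1,0]$ by duality. One small correction at your $s=-1$ endpoint: testing against $R_hw$ does not work as written, since $(\psi,w)\neq(\psi,R_hw)$ in general and the discrepancy is not controlled by $\normh{\psi}{-1}$ alone; the clean argument is your parenthetical alternative, namely $(\psi,w)=(\psi,P_hw)\le\normh{\psi}{-1}\,\|\nabla P_hw\|_{L^2(\Omega)}\le C\normh{\psi}{-1}\|\nabla w\|_{L^2(\Omega)}$, using the $H^1$-stability of $P_h$ on quasi-uniform meshes.
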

\begin{proof}
The inverse estimate was shown in \cite[Lemma 3.3]{Bangti_LZ_2013}.
By the definition of the $\normh{\cdot}{s}$-norm and the discrete Laplace operator,
it is easy to show $\normh{\cdot}{s}$ is equivalent to $\|\cdot\|_{\dH s}$ with
$s=0,1$. Then the assertion for $0\le s\le 1$ follows by interpolation, and by
duality it is also equivalent to $\|\cdot\|_{\dH s}$ for $-1\leq s\leq0$.
\end{proof}

\subsection{Properties of the discrete solution}\label{ssec:sol}
Next, analogous to Lemma \ref{lem:E}, we introduce some smoothing properties of $\Etilh(t)$.
The proof is identical with that for Lemma \ref{lem:E}, and hence omitted.
\begin{lemma}\label{lem:Eh}
Let $\Etilh$ be defined by \eqref{E-tilde} and $ \psi \in X_h$. Then we have for all $t
>0$,
\begin{equation*}
 \normh{ \Etilh(t) \psi }{p} \le \left \{
\begin{array}{ll}
  Ct^{ -1 + \al(1 + (q-p)/2)}\normh{\psi}{q}, & \quad p-4\leq q \le p, \\[1.3ex]
  Ct^{ -1 + \alpha }\normh{\psi}{q},  & \quad p< q.
\end{array}\right .
\end{equation*}
\end{lemma}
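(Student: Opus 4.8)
The plan is to run the proof of Lemma~\ref{lem:E} essentially verbatim on $X_h$, with the continuous eigenpairs $(\la_j,\fy_j)$ replaced by the discrete ones $(\la_j^h,\fy_j^h)$ and the norm $\|\cdot\|_{\dH p}$ replaced by $\normh{\cdot}{p}$. Fix $\psi\in X_h$ and $t>0$. Using the definition \eqref{E-tilde} of $\Etilh$ and expanding in the $L^2(\Omega)$-orthonormal basis $\{\fy_j^h\}_{j=1}^\NN$, I would write
\begin{equation*}
  \normh{\Etilh(t)\psi}{p}^2=t^{-2+(2+q-p)\al}\sum_{j=1}^\NN (\la_j^h t^\al)^{p-q}\,|\Mitagaa(-\la_j^h t^\al)|^2\,(\la_j^h)^q\,(\psi,\fy_j^h)^2 .
\end{equation*}
Since $\beta=\al$ here, $\beta-\al=0\in\mathbb{Z}^-\cup\{0\}$, so for arguments of modulus $\la_j^h t^\al$ and argument $\pi$ the Mittag--Leffler bound \eqref{M-L-bound} gives $|\Mitagaa(-\la_j^h t^\al)|\le C/(1+(\la_j^h t^\al)^2)$. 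In the regime $p-4\le q\le p$ one has $0\le p-q\le4$, hence $\sup_{x>0}\frac{x^{p-q}}{(1+x^2)^2}\le C$; pulling this constant out of the sum leaves $\sum_j(\la_j^h)^q(\psi,\fy_j^h)^2=\normh{\psi}{q}^2$, and taking square roots yields the first claimed bound $\normh{\Etilh(t)\psi}{p}\le Ct^{-1+\al(1+(q-p)/2)}\normh{\psi}{q}$.

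The regime $p<q$ will need one extra, easy ingredient, because $x^{p-q}/(1+x^2)^2$ is unbounded near $x=0$ when $p-q<0$ and the $\sup$ step above fails. The remedy is the uniform spectral lower bound $\la_j^h\ge\la_1>0$ for all $1\le j\le\NN$ and all $h$, which follows from $X_h\subset H^1_0(\Omega)$ together with the min--max characterization of the eigenvalues of $-\De$ and $-\De_h$. Since $p-q<0$ this gives $(\la_j^h)^{p-q}\le\la_1^{p-q}$, i.e.\ $(\la_j^h)^p\le\la_1^{p-q}(\la_j^h)^q$; combining with the crude bound $|\Mitagaa(-\la_j^h t^\al)|\le C$ (again from \eqref{M-L-bound}, the argument lying on $(-\infty,0]$) I get
\begin{equation*}
  \normh{\Etilh(t)\psi}{p}^2=\sum_{j=1}^\NN(\la_j^h)^p\,|t^{\al-1}\Mitagaa(-\la_j^h t^\al)|^2\,(\psi,\fy_j^h)^2\le C\,t^{2\al-2}\sum_{j=1}^\NN(\la_j^h)^q(\psi,\fy_j^h)^2 ,
\end{equation*}
that is, $\normh{\Etilh(t)\psi}{p}\le Ct^{-1+\al}\normh{\psi}{q}$. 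Equivalently, $\la_j^h\ge\la_1$ already shows $\normh{\cdot}{p}\le C\normh{\cdot}{q}$ on $X_h$ when $p<q$, so one may simply invoke the first estimate with $p$ replaced by $q$.

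I do not anticipate a genuine obstacle here: the argument is the computation of Lemma~\ref{lem:E} plus the observation that the discrete spectrum stays away from $0$ uniformly in $h$. The only two points deserving a line of care are (i) that every constant is independent of $h$ — which holds because the Mittag--Leffler bounds of Lemma~\ref{lem:mlfbdd} are uniform and $\la_1$ depends only on $\Omega$ — and (ii) that the two regimes genuinely differ, the weaker exponent $-1+\al$ for $p<q$ being forced by this method precisely because the relevant supremum is infinite near the origin.
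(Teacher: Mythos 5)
Your proposal is correct and follows the same spectral computation that the paper has in mind: it states only that the proof is ``identical with that for Lemma \ref{lem:E}'' and omits it. You have in fact done slightly more than the paper asks, since the regime $p<q$ does not occur in Lemma \ref{lem:E}, and your extra ingredient --- the uniform lower bound $\la_j^h\ge\la_1>0$ from the min--max principle, giving $(\la_j^h)^{p-q}\le\la_1^{p-q}$ so that the crude bound $|\Mitagaa(-\la_j^h t^\al)|\le C$ suffices --- is exactly what is needed to make that case rigorous with constants independent of $h$.
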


The following estimate is the discrete analogue of Theorem \ref{thm:reg}.
It is essential for the analysis of the lumped mass method in Section 4.
\begin{lemma}\label{lem:reg-d}
Let $u_h$ be the solution of \eqref{fem-operator}. Then for arbitrary $p>-1$
\begin{equation}\label{Gal-L2-est-1}
  \begin{aligned}
   \int_0^T\normh{\Dal u_h(t)}{p}^2 +\normh{ u_h(t) }{p+2}^2 \, dt & \le  \int_0^T \normh {f_h(t)}{p}^2 dt,
  \end{aligned}
\end{equation}
and
\begin{equation}\label{Gal-L2-est-2}
  \begin{aligned}
    \normh{ u_h(t) }{p+2-\epsilon} & \leq C\epsilon^{-1}t^{\ep\al/2}\tribar f_h\tribar_{L^\infty(0,t;\dH {p})}.
  \end{aligned}
\end{equation}
\end{lemma}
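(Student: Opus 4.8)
The plan is to transcribe the proof of Theorem~\ref{thm:reg} to the discrete setting, expanding $u_h$ in the eigenbasis $\{(\la_j^h,\fy_j^h)\}_{j=1}^{\NN}$ of $-\De_h$. Set $a_n(t):=(u_h(t),\fy_n^h)$ and $b_n(t):=(f_h(t),\fy_n^h)$. Since each $\fy_n^h$ is independent of $t$, the operator $\Dal$ commutes with the projection onto $\fy_n^h$, so projecting \eqref{fem-operator} onto $\fy_n^h$ reduces it to the scalar Caputo equation
\begin{equation*}
\Dal a_n(t)+\la_n^h a_n(t)=b_n(t),\qquad a_n(0)=0,
\end{equation*}
while \eqref{Duhamel_o}--\eqref{E-tilde} give the equivalent representation $a_n=E_n*b_n$ with the nonnegative kernel $E_n(\tau):=\tau^{\al-1}\Mitagaa(-\la_n^h\tau^\al)$. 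The computation in \eqref{ineq1} uses only the complete monotonicity of $E_{\al,1}(-t^\al)$ and the differentiation formula of Lemma~\ref{lem:mlfbdd}, hence is valid for every positive eigenvalue; thus $\int_0^t\la_n^h E_n(\tau)\,d\tau\le1$ for all $t>0$.

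For \eqref{Gal-L2-est-1} I would test the modal equation with $\la_n^h a_n$ and integrate over $(0,T)$. Substituting $\Dal a_n=b_n-\la_n^h a_n$, expanding the square, and using $b_na_n=a_n\Dal a_n+\la_n^h a_n^2$ gives the identity
\begin{equation*}
\int_0^T\!\big((\Dal a_n)^2+(\la_n^h a_n)^2\big)\,dt=\int_0^T b_n^2\,dt-2\la_n^h\int_0^T a_n\,\Dal a_n\,dt,
\end{equation*}
and the coercivity of the Caputo derivative, $\int_0^T v\,\Dal v\,dt\ge0$ whenever $v(0)=0$ (a standard positivity property, following from $\mathrm{Re}\,z^\al\ge0$ for $\mathrm{Re}\,z\ge0$ and $0<\al<1$, i.e. from the positive-realness of the symbol of $\Dal$ on such $v$), makes the last term $\le0$. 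Multiplying by $(\la_n^h)^p$ and summing over $n=1,\dots,\NN$, together with $\normh{\Dal u_h}{p}^2=\sum_n(\la_n^h)^p(\Dal a_n)^2$ and $\normh{u_h}{p+2}^2=\sum_n(\la_n^h)^{p+2}a_n^2$, yields \eqref{Gal-L2-est-1}. If one is content with a constant $C$ on the right instead of $1$, this step is a verbatim copy of the Theorem~\ref{thm:reg} argument: the differentiation formula writes $\Dal a_n=b_n-(\la_n^h E_n)*b_n$, and Young's inequality together with $\|\la_n^h E_n\|_{L^1(0,T)}\le1$ bounds $\|\Dal a_n\|_{L^2(0,T)}$ and $\|\la_n^h a_n\|_{L^2(0,T)}$ by $\|b_n\|_{L^2(0,T)}$.

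Estimate \eqref{Gal-L2-est-2} is the exact discrete counterpart of \eqref{eq:regeps}, and I would prove it identically: starting from \eqref{Duhamel_o}, the triangle inequality for the Bochner integral and Lemma~\ref{lem:Eh} applied with indices $p+2-\ep$ and $p$ (so that $0\le(p+2-\ep)-p\le4$ for small $\ep$ and the first branch produces the exponent $-1+\al\ep/2$) give
\begin{equation*}
\begin{aligned}
\normh{u_h(t)}{p+2-\ep}&\le\int_0^t\normh{\Etilh(t-s)f_h(s)}{p+2-\ep}\,ds\\
&\le C\int_0^t(t-s)^{-1+\al\ep/2}\,ds\;\tribar f_h\tribar_{L^\infty(0,t;\dH p)},
\end{aligned}
\end{equation*}
and since $\int_0^t(t-s)^{-1+\al\ep/2}\,ds=\tfrac{2}{\al\ep}\,t^{\al\ep/2}\le C\ep^{-1}t^{\al\ep/2}$, we obtain \eqref{Gal-L2-est-2}.

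The bulk of the argument is a routine transcription, with $\la_j$ replaced by $\la_j^h$ and $\bar E$ by $\Etilh$; the only genuinely non-mechanical point is the sharp constant $1$ in \eqref{Gal-L2-est-1}. Bounding $\normh{\Dal u_h}{p}$ and $\normh{u_h}{p+2}$ separately by Young's inequality does not suffice for constant $1$, and one must invoke the coercivity/positivity of the Caputo operator on functions vanishing at $t=0$ as above; everything else follows mechanically from Lemma~\ref{lem:Eh} and the discrete analogue of \eqref{ineq1}.
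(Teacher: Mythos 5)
Your argument is correct, and for estimate \eqref{Gal-L2-est-2} and for the ``constant $C$'' version of \eqref{Gal-L2-est-1} it coincides with the paper's proof: the paper expands $u_h$ in the discrete eigenbasis via \eqref{Duhamel_o}, applies Young's convolution inequality mode by mode together with the identity $\int_0^T \la_j^h t^{\al-1}\Mitagaa(-\la_j^h t^\al)\,dt = 1-E_{\al,1}(-\la_j^h T^\al)\in(0,1)$ (the discrete analogue of \eqref{ineq1}), recovers the bound on $\Dal u_h$ from the equation and the triangle inequality, and obtains \eqref{Gal-L2-est-2} exactly as you do from Lemma \ref{lem:Eh} with the $\ep$-shifted index and $\int_0^t(t-s)^{\ep\al/2-1}ds=\tfrac{2}{\al\ep}t^{\ep\al/2}$. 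Where you genuinely depart from the paper is the modal energy identity combined with the Caputo coercivity $\int_0^T v\,\Dal v\,dt\ge 0$ for $v(0)=0$, which yields the constant $1$ that \eqref{Gal-L2-est-1} literally asserts; the paper's own proof only delivers a generic constant $C$ (its chain of inequalities ends in $C\int_0^T\normh{f_h(t)}{p}^2dt$, and the triangle-inequality step for $\Dal u_h$ inflates it further), so your diagnosis that separate Young-type bounds cannot produce $1$ and that positivity of the Caputo operator is the missing ingredient is accurate — your version proves the inequality as stated, while the paper's proves it up to a constant (which is all that is used downstream, e.g.\ in Theorem \ref{thm:gal:l2}). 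The coercivity you invoke is a standard fact (since $a_n(0)=0$ the Caputo and Riemann--Liouville derivatives coincide, and positivity follows from $\mathrm{Re}\,(i\omega)^\al=|\omega|^\al\cos(\al\pi/2)\ge0$ via Plancherel after zero extension), but it is the one step that deserves an explicit citation or proof; everything else is, as you say, a mechanical transcription of Theorem \ref{thm:reg} with $\la_j$, $\bar E$ replaced by $\la_j^h$, $\Etilh$.
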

\begin{proof}
The solution $u_h(t)$ of \eqref{fem-operator} can be represented by \eqref{Duhamel_o}, and hence
\begin{equation*}
\begin{split}
   \normh {u_h(t)}{p+2}^2
  =&\sum_{j=1}^{N} (\la_j^h)^{p+2} \Big |\int_0^t (t-\tau)^{\al-1}
\Mitagaa(-\la^h_j(t-\tau)^\al)(f_h(\cdot,\tau),\fy_j^h)\,d\tau \Big |^2\\
=&\sum_{j=1}^{N} (\la_j^h)^p \Big |\int_0^t \la_j^h(t-\tau)^{\al-1} \Mitagaa(-\la^h_j(t-\tau)^\al)  (f_h(\cdot,\tau),\fy_j^h)\,d\tau \Big |^2.
\end{split}
\end{equation*}
Then by Young's inequality for convolution, we deduce
\begin{equation*}
 \begin{split}
  \int_0^T\normh{ u_h(t) }{p+2}^2 \, dt
              & \le \sum_{j=1}^N (\la_j^h)^p \int_0^T |(f_h(\cdot,t),\fy_j^h)|^2 \,dt
  \left( \int_0^T \la_j^h t^{\al-1} \Mitagaa(-\la^h_j t^\al) \, dt\right)^2\\
       &\le  C \int_0^T \normh {f_h(t)}{p}^2 dt,
 \end{split}
\end{equation*}
where the last line follows from the identity
$\int_0^T \la_j^h t^{\al-1} \Mitagaa(-\la^h_j t^\al)dt=1-E_{\alpha,1}(-\la^h_jt^\alpha)\in (0,1)$, cf. \eqref{ineq1}.
Now the first estimate follows from this and the triangle inequality and equation \eqref{fem-operator}.

The second estimate follows from Lemma \ref{lem:Eh}
\begin{equation*}
  \begin{aligned}
    \normh {u_h(\cdot,t)}{p+2-\epsilon} & = \normh{\int_0^t \bar{E}_h(t-s)f_h(s)ds}{p} \leq \int_0^t \normh {\bar{E}_h(t-s)f_h(s)}{p+2-\ep} ds \\
      & \leq \int_0^t (t-s)^{\ep\al/2-1} \normh {f_h}{p}
       \leq C\epsilon^{-1}t^{\ep\al/2}\tribar f_h\tribar_{L^\infty(0,t; H^{p}(\Omega))}.
  \end{aligned}
\end{equation*}
This completes the proof of the lemma.
\end{proof}

The rest of this part is devoted to the error analysis for the semi-discrete
Galerkin scheme for a nonsmooth source term
$f\in L^\infty(0,T;\dH q) $, $-1<q\leq 1$.
To this end, we employ the $L^2$-projection $P_hu$, and split the error $u_h-u$ into two terms as:
\begin{equation}\label{separate}
  u_h-u=(u_h-P_hu)+(P_hu-u):=\vtht + \rlh.
\end{equation}
Obviously, $ P_h \Dal \rlh = \Dal P_h(P_hu-u)=0$
and using the identity $\Delta_hR_h=P_h\Delta$, we get the following equation for $\vtht$:
\begin{equation}\label{eq:thettil}
 \Dal \vtht(t) -\Delta_h \vtht(t) =
- \Delta_h (R_h u - P_h u)(t), \quad t>0, \quad \vtht(0)=0.
\end{equation}
Then in view of the representation formula \eqref{Duhamel_o}, $\vtht(t)$ can be represented by
\begin{equation}\label{eqn:vtht}
  \vtht(t) = - \int_0^t\Etilh(t-s)\Delta_h(R_hu-P_hu)(s)\,ds.
\end{equation}
Next we shall treat the $L^2$- and $L^\infty$- in time error estimates separately,
due to the different stability estimates in the two cases, cf. Theorem \ref{thm:reg}.

\subsection{Error estimates for solutions in $L^2(0,T;\dot H^p(\Omega))$}
In this part, we establish error estimates in $L^2$-norm in time.
The case $-1<q\leq 0$ is stated in the next theorem, while
the case $0<q\leq 1$ follows directly and is commented in Remark \ref{rmk:gal:l2} below.
\begin{theorem}\label{thm:gal:l2}
Let $f\in L^\infty(0,T;\dH q)$, $-1<q\leq 0$, and $u$ and $u_h$ be the
solutions of \eqref{eq1} and \eqref{fem-operator} with
$f_h=P_hf$, respectively. Then
\begin{equation*}
 \| u_h - u \|_{L^2(0,T;L^2(\Om))} + h\|\nabla(u_h- u)\|_{L^2(0,T;L^2(\Om))} \le Ch^{2+q}  \|f\|_{L^2(0,T;\dH q)}.
\end{equation*}
\end{theorem}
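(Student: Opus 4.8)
The goal is to bound the error in the $L^2(0,T;L^2(\Omega))$- and $L^2(0,T;\dot H^1(\Omega))$-norms. Following the splitting \eqref{separate}, $u_h - u = \vtht + \rlh$, I would treat the two pieces separately. The term $\rlh = P_h u - u$ is controlled directly by the projection error estimates in Lemma \ref{lem:prh-bound} together with the regularity estimate \eqref{eq:reg} from Theorem \ref{thm:reg}: since $u(\cdot,t) \in \dH{q+2}$ with $q+2 \in (1,2]$, interpolating the two bounds in Lemma \ref{lem:prh-bound} gives $\|(I-P_h)u(t)\|_{L^2(\Omega)} + h\|\nabla(I-P_h)u(t)\|_{L^2(\Omega)} \le Ch^{2+q}\|u(t)\|_{\dH{q+2}}$, and then integrating in $t$ and invoking \eqref{eq:reg} yields the desired $Ch^{2+q}\|f\|_{L^2(0,T;\dH q)}$ bound for $\rlh$. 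So the real work is in estimating $\vtht$.

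\textbf{Estimating $\vtht$.} Using the representation \eqref{eqn:vtht}, $\vtht(t) = -\int_0^t \Etilh(t-s)\Delta_h(R_hu-P_hu)(s)\,ds$. The idea is to write $-\Delta_h(R_hu - P_hu) = -\Delta_h(R_hu - u) - \Delta_h(u - P_hu)$, or more cleverly to recognize that $\normh{\Delta_h w}{p} = \normh{w}{p+2}$ and apply Lemma \ref{lem:reg-d} with source term $f_h$ replaced by $-\Delta_h(R_hu-P_hu)$. Concretely, \eqref{Gal-L2-est-1} applied with $p = -2$ gives
\begin{equation*}
 \int_0^T \normh{\vtht(t)}{0}^2\,dt \le C\int_0^T \normh{\Delta_h(R_hu-P_hu)(t)}{-2}^2\,dt = C\int_0^T \normh{(R_hu-P_hu)(t)}{0}^2\,dt,
\end{equation*}
and with $p=-1$ we get $\int_0^T \normh{\vtht(t)}{1}^2\,dt \le C\int_0^T\normh{(R_hu-P_hu)(t)}{1}^2\,dt$. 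By the norm equivalence in Lemma \ref{lem:inverse} (valid for $s\in[-1,1]$), $\normh{w}{s} \sim \|w\|_{\dH s}$ for $s = 0,1$, so it remains to bound $\|R_hu - P_hu\|_{L^2}$ and $\|\nabla(R_hu-P_hu)\|_{L^2}$ pointwise in $t$ by $Ch^{2+q}\|u(t)\|_{\dH{q+2}}$ and $Ch^{1+q}\|u(t)\|_{\dH{q+2}}$ respectively. Writing $R_hu - P_hu = (R_hu - u) + (u - P_hu)$ and using Lemma \ref{lem:prh-bound} (again interpolating between $q+2 = 1$ and $q+2 = 2$ since $-1 < q \le 0$) handles both. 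Then integrate in $t$ and apply \eqref{eq:reg}. The gradient estimate for $u_h - u$ then follows from $h\|\nabla\vtht\|_{L^2(0,T;L^2)} \le Ch \cdot h^{1+q}\|f\|_{L^2(0,T;\dH q)} = Ch^{2+q}\|f\|_{L^2(0,T;\dH q)}$, and similarly for the $L^2$-part.

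\textbf{Main obstacle.} The delicate point is getting the $L^2$-in-time bound for $\vtht$ to be $O(h^{2+q})$ rather than $O(h^{1+q})$ — i.e., exploiting the extra smoothing of the discrete solution operator $\Etilh$ so that the full two orders of projection accuracy are recovered in the $L^2(\Omega)$-norm. This is exactly what the $p=-2$ case of \eqref{Gal-L2-est-1} provides: the convolution with $\Etilh$ acts like a bounded map from $L^2$ in time (via Young's inequality and the integrability $\int_0^T \la_j^h t^{\al-1}E_{\al,\al}(-\la_j^h t^\al)\,dt \le 1$) in a way that gains two spatial derivatives. One must be careful that Lemma \ref{lem:reg-d} is stated for $p > -1$, so applying it with $p = -2$ requires either re-deriving the estimate directly (the proof is the same Young's-inequality argument, and the restriction $p>-1$ is not actually used there — it only affects the pointwise-in-time estimate \eqref{Gal-L2-est-2}), or reorganizing so that $\Delta_h$ is absorbed against the smoothing of $\Etilh$ via $\normh{\Etilh(t)\Delta_h w}{0} \le \normh{\Etilh(t)w}{2} \le Ct^{-1}\normh{w}{0}$... but $t^{-1}$ is not integrable. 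The clean route is therefore the Young's-inequality/convolution argument at the level of Fourier coefficients exactly as in the proof of Lemma \ref{lem:reg-d}, observing that it is valid for any real $p$, including $p = -2$. A secondary routine check is that the interpolation of the two Ritz/$L^2$-projection bounds in Lemma \ref{lem:prh-bound} is legitimate for the fractional order $q+2 \in (1,2]$; this is standard since the $\dH s$ form a Hilbert scale.
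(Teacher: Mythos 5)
Your proposal is correct and follows essentially the same route as the paper: the splitting \eqref{separate}, the projection estimates of Lemma \ref{lem:prh-bound} combined with the regularity bound \eqref{eq:reg} for $\rlh$, and the discrete $L^2$-in-time stability \eqref{Gal-L2-est-1} applied to the equation \eqref{eq:thettil} for $\vtht$ with the identification $\normh{\Delta_h w}{p-2}=\normh{w}{p}$. Your remark that \eqref{Gal-L2-est-1} must be invoked at $p-2=-2$, outside the stated range $p>-1$ of Lemma \ref{lem:reg-d}, but that the Young's-inequality proof of that estimate is valid for all real $p$, is exactly right and in fact addresses a point the paper's own proof passes over silently.
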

\begin{proof}
We use the splitting \eqref{separate}.
By Theorem \ref{thm:reg} and Lemma \ref{lem:prh-bound}
\begin{equation*}
\begin{split}
 \| \rlh \|_{L^2(0,T;L^2(\Om))} + h  \|\nabla \rlh\|_{L^2(0,T;L^2(\Om))} &\le Ch^{2+q}  \|u\|_{L^2(0,T;\dH {2+q})}
                   \le Ch^{2+q}  \|f\|_{L^2(0,T;\dH q)}.
\end{split}
\end{equation*}
By \eqref{Duhamel_o}, \eqref{eq:thettil} and Lemmas \ref{lem:reg-d} and \ref{lem:prh-bound}, we have for $p=0,\,1$:
\begin{equation*}
 \begin{split}
   \int_0^T \|\vtht(t)\|^2_{p} dt 
                    & \le C\int_0^T \normh{\Delta_h (R_h u - P_h u)(t) }{p-2}^2 dt \\
                    & \le C\int_0^T \normh{(R_h u - P_h u)(t)}{p}^2 dt\\
                    & \le C h^{4+2q-2p} \| u(t) \|_{L^2(0,T;\dH {2+q})}^2 \le C h^{4+2q-2p}\| f(t) \|_{L^2(0,T;\dH q)}^2.
 \end{split}
\end{equation*}
Combing the preceding two estimates yields the desired assertion.
\end{proof}

\begin{remark}\label{rmk:gal:l2}
Theorem \ref{thm:reg} implies that for $0<q\leq 1$, there holds
\begin{equation*} 
 \| u_h - u \|_{L^2(0,T;L^2(\Om))} + h\|\nabla(u_h- u)\|_{L^2(0,T;L^2(\Om))} \le Ch^2\|f\|_{L^2(0,T;\dH q)}.
\end{equation*}
\end{remark}

\subsection{Error estimates for solutions in $L^\infty(0,T;\dot H^p(\Omega))$} Now we turn to error estimates
in $L^\infty$-norm in time.
Like before, we first consider the case $-1<q\leq 0$. By Theorem \ref{thm:reg}
and Lemma \ref{lem:prh-bound}, the following estimate holds for $\rlh$:
\begin{equation*}
\begin{split}
 \| \rlh(t) \|_{L^2(\Omega)} + h \|\nabla\rlh(t)\|_{L^2(\Omega)} &\le Ch^{2+q-\epsilon} \|u(t)\|_{\dH {2+q-\epsilon}}
          \le C\epsilon^{-1}h^{2+q-\epsilon}\|f\|_{L^\infty(0,t;\dH q)}.
\end{split}
\end{equation*}
Now the choice $\ell_h= |\ln h|,\, \epsilon=1/\ell_h$ yields
\begin{equation}\label{Ph-bound}
 \| \rlh(t) \|_{L^2(\Omega)} + h \|\nabla\rlh(t)\|_{L^2(\Omega)}\le C\ell_hh^{2+q} \|f\|_{L^\infty(0,t;\dH q)}.
\end{equation}

Thus, it suffices to bound the term $\vtht$, which is shown in the following lemma.
\begin{lemma}\label{lem:vtht}
Let $\vtht(t)$ be defined by \eqref{eqn:vtht}. Then for $f\in L^\infty(0,T;\dH q)$, $-1<q\leq 0$, we have
\begin{equation*}
  \|\vtht(t)\|_{L^2(\Omega)}+h\|\nabla\vtht(t)\|_{L^2(\Omega)}\leq Ch^{2+q}\ell_h^2\|f\|_{L^\infty(0,t;\dH q)} \quad \mbox{with } \quad \ell_h=|\ln h|.
\end{equation*}
\end{lemma}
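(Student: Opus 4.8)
The plan is to estimate the representation \eqref{eqn:vtht} directly, using the smoothing property of $\Etilh$ from Lemma \ref{lem:Eh} together with the projection error bounds of Lemma \ref{lem:prh-bound}. The key observation is that for $p = 0,1$ the norm $\normh{\Etilh(t-s)\Delta_h(R_hu - P_hu)(s)}{p}$ can be controlled in two different ways, and we should interpolate (or rather, split the time integral) between them. Using Lemma \ref{lem:Eh} with the shift by $2$ coming from $\Delta_h$, we have on one hand $\normh{\Etilh(\sigma)\Delta_h w}{p} \le C\sigma^{-1+\al}\normh{\Delta_h w}{p-2} = C\sigma^{-1+\al}\normh{w}{p}$, and on the other hand, for $s' < p$ we may use the second branch of Lemma \ref{lem:Eh} to write $\normh{\Etilh(\sigma)\Delta_h w}{p} \le C\sigma^{-1+\al}\normh{w}{s'}$ whenever $p > s' $, or the first branch $\normh{\Etilh(\sigma)\Delta_h w}{p} \le C\sigma^{-1+\al(1+(s'-p)/2)}\normh{w}{s'}$ when $p-2 \le s'$. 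The point is that $\normh{(R_hu-P_hu)(s)}{s'}$ is $O(h^{2+q-\epsilon - s'})$ by Lemma \ref{lem:prh-bound} combined with the regularity estimate \eqref{eq:regeps}, so a smaller exponent $s'$ buys a better power of $h$ at the cost of a more singular (but still integrable) time kernel.

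Concretely, I would proceed as follows. First, by Lemma \ref{lem:Eh} (first branch, with the $\Delta_h$ absorbed) and Lemma \ref{lem:inverse},
\begin{equation*}
  \|\vtht(t)\|_{p} \le C\int_0^t (t-s)^{-1+\al}\, \normh{(R_hu-P_hu)(s)}{p}\, ds, \quad p = 0,1.
\end{equation*}
By Lemma \ref{lem:prh-bound}, $\|(R_hu-P_hu)(s)\|_{L^2} + h\|\nabla(R_hu-P_hu)(s)\|_{L^2} \le Ch^{2+q-\epsilon}\|u(s)\|_{\dH{2+q-\epsilon}}$ provided $2+q-\epsilon \in (1,2]$, i.e.\ for small $\epsilon$; combined with \eqref{eq:regeps} this gives $\normh{(R_hu-P_hu)(s)}{p} \le C\epsilon^{-1}h^{2+q-\epsilon-p}s^{\epsilon\al/2}\|f\|_{L^\infty(0,t;\dH q)}$. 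Plugging in and integrating the kernel $(t-s)^{-1+\al}s^{\epsilon\al/2}$ over $(0,t)$ produces a bounded factor (a Beta function in $t$, uniformly bounded on $[0,T]$). This yields $\|\vtht(t)\|_p \le C\epsilon^{-1}h^{2+q-\epsilon-p}\|f\|_{L^\infty(0,t;\dH q)}$. Choosing $\epsilon = 1/\ell_h$ with $\ell_h = |\ln h|$ turns $h^{-\epsilon}$ into a bounded constant and $\epsilon^{-1}$ into $\ell_h$, giving the bound $Ch^{2+q-p}\ell_h\|f\|_{L^\infty(0,t;\dH q)}$ --- which is already the claimed estimate, but with only one logarithmic factor rather than two.

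To recover the stated $\ell_h^2$ (and, I suspect, because the naive argument above has a gap when $q$ is close to $-1$, where $2+q-\epsilon$ may drop below or near $1$ and the Ritz bound of Lemma \ref{lem:prh-bound} is not directly available at that smoothness), the refinement is to split $\Etilh(t-s)\Delta_h$ more carefully: write $\Delta_h(R_hu - P_hu) = \Delta_h R_h u - \Delta_h P_h u = P_h\Delta u - \Delta_h P_h u$, and use $\|P_h\Delta u\|$-type bounds together with $\normh{\Delta_h P_h u}{p-2} = \normh{P_h u}{p}$ and the $\dH{s}$-stability of $P_h$ from Lemma \ref{lem:prh-bound}. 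Alternatively, and more in the spirit of the paper, split the time integral at $t/2$: on $(0,t/2)$ use the smoothing of $\Etilh$ to pull out extra powers of $(t-s)^{-1}$, trading them for extra powers of $h$ via Lemma \ref{lem:inverse} on the integrand, which is where the second logarithm enters when one optimizes $\epsilon$ twice (once in the regularity estimate for $u$ and once in the inverse-estimate/interpolation step); on $(t/2,t)$ the kernel $(t-s)^{-1+\al}$ is the only singular factor and is harmless.

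The main obstacle, I expect, is precisely the low-regularity end $q \to -1^+$: there $u(s) \in \dH{2+q-\epsilon}$ with $2+q-\epsilon$ possibly below $1$, so the Ritz-projection estimate of Lemma \ref{lem:prh-bound} (stated only for $\psi \in \dH q$, $q = 1,2$) does not apply verbatim to $R_hu - P_hu$, and one must instead argue via $\Delta_h(R_hu - P_hu) = P_h\Delta u - \Delta_h P_h u$ and the mapping properties of $P_h$ and $\Delta_h$ on the negative scale, reducing everything to $\normh{\cdot}{p-2}$ bounds where the fractional smoothness of $u$ from \eqref{eq:regeps} can be fed in through Lemma \ref{lem:inverse}. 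Balancing the two $\epsilon$-type parameters to land exactly on $h^{2+q}\ell_h^2$ is the delicate bookkeeping; everything else is a routine convolution estimate.
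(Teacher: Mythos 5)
There is a genuine gap, and it sits at the very first quantitative step of your ``concrete'' computation. You invoke Lemma \ref{lem:Eh} to claim $\normh{\Etilh(\sigma)\Delta_h w}{p}\le C\sigma^{-1+\al}\normh{\Delta_h w}{p-2}=C\sigma^{-1+\al}\normh{w}{p}$. But taking $q=p-2$ in the first branch of Lemma \ref{lem:Eh} gives the exponent $-1+\al\bigl(1+((p-2)-p)/2\bigr)=-1$, i.e.\ the kernel is $C\sigma^{-1}$, not $C\sigma^{-1+\al}$; the $\sigma^{-1+\al}$ branch is available only when the data index exceeds the target index ($q>p$), which is the opposite of the situation here. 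A kernel $(t-s)^{-1}$ is not integrable, so the convolution estimate you write down --- and with it your claim that a one-logarithm bound $Ch^{2+q-p}\ell_h$ already follows --- collapses. This is precisely the ``limited smoothing'' obstruction the paper emphasizes: you cannot absorb the full two orders of $\Delta_h$ into $\Etilh$. The paper's proof takes only $2-\ep$ orders of smoothing, bounding $\normh{\Etilh(t-s)\Delta_h(R_hu-P_hu)(s)}{p}$ by $C(t-s)^{\ep\al/2-1}\normh{\Delta_h(R_hu-P_hu)(s)}{p-2+\ep}=C(t-s)^{\ep\al/2-1}\normh{(R_hu-P_hu)(s)}{p+\ep}$, then uses the inverse inequality to drop to $\normh{\cdot}{p}$ at the cost of $h^{-\ep}$, the projection bounds of Lemma \ref{lem:prh-bound} together with \eqref{eq:regeps} at regularity $2+q-\ep$ (costing $h^{-\ep}$ and one factor $\ep^{-1}$), and finally $\int_0^t(t-s)^{\ep\al/2-1}\,ds\le C\ep^{-1}t^{\ep\al/2}$, which is the second factor $\ep^{-1}$. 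With $\ep=1/\ell_h$ the factors $h^{-2\ep}$ are bounded and $\ep^{-2}=\ell_h^2$. The two logarithms therefore have a precise and, within this argument, unavoidable origin --- one from the regularity estimate, one from the barely integrable kernel --- rather than being an artifact of suboptimal bookkeeping as your write-up suggests.

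Your remaining concerns and proposed repairs do not close this gap. The worry about the Ritz projection near $q=-1$ is a side issue: the combined bound of Lemma \ref{lem:prh-bound} for $s=1,2$ interpolates to $s=2+q-\ep\in(1,2]$, which is all that is needed for $-1<q\le 0$ and small $\ep$; no detour through $\Delta_h(R_hu-P_hu)=P_h\Delta u-\Delta_hP_hu$ is required. The alternative routes you sketch (splitting the integral at $t/2$, ``optimizing $\epsilon$ twice'') are not carried out and, as stated, do not produce the estimate. In short, the architecture you describe (smoothing of $\Etilh$, inverse estimate, projection error, \eqref{eq:regeps}, choice $\ep=1/\ell_h$) is the paper's, but the central exponent computation is wrong and the proof does not go through as written.
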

\begin{proof}
By \eqref{Duhamel_o} and Lemmas \ref{lem:inverse} and \ref{lem:Eh}, we deduce that for $p=0,1$
\begin{equation*}
   \begin{aligned}
     \|\vtht(t)\|_{\dH p} &\leq \int_0^t \|\Etilh(t-s)\Delta_h(R_h u-P_hu)(s)\|_{\dH p} ds\\
      &\leq C\int_0^t (t-s)^{\epsilon\al/2 - 1} \normh {\Delta_h(R_h u -P_hu)(s)}{p-2+\epsilon}ds\\
      & \leq C\int_0^t(t-s)^{\epsilon\al/2-1} \normh{R_h u(s) -P_hu(s)}{p+\epsilon}ds := A.
    \end{aligned}
\end{equation*}
Further, we apply the inverse estimate from Lemma \ref{lem:inverse} for $R_h u - P_h u$ and
the bounds in Lemma \ref{lem:prh-bound}, for $P_h u -u$ and $R_hu -u$,
respectively, to deduce
\begin{equation*}
   \begin{aligned}
A & \leq C h^{-\epsilon} \int_0^t(t-s)^{\epsilon\al/2-1} \| R_h u(s) -P_hu(s)\|_{\dot H^p(\Omega)}ds \\
       &  \leq C h^{2+q-p-2\epsilon} \int_0^t(t-s)^{\epsilon\al/2-1} \| u(s) \|_{\dH {2+q-\epsilon}}ds.
   \end{aligned}
\end{equation*}
Further, by applying estimate \eqref{eq:regeps} and choosing $\ep=1/\ell_h$ we get
\begin{equation*}
   \begin{aligned}
    A   & \leq C \epsilon^{-1} h^{2+q-p-2\epsilon}  \|f\|_{L^\infty(0,t;\dH q)}
\int_0^t(t-s)^{\epsilon\al/2-1}t^{\epsilon \alpha/2}ds
\le Ch^{2+q-p}\ell_h^2\|f\|_{L^\infty(0,t;\dH q)}.
   \end{aligned}
\end{equation*}
This completes the proof of the lemma.
\end{proof}
Now we can state the main result of this part, namely, an almost optimal error estimate
 of the Galerkin approximation $u_h$ for solutions $u \in L^\infty(0,T;\dH p)$.
\begin{theorem}\label{thm:gal:linf}
Let $f\in L^\infty(0,T;\dH q)$, $-1<q\leq0$, and $u$ and $u_h$ be the solutions
of \eqref{eq1} and \eqref{fem-operator} with $f_h=P_hf$, respectively.
Then with $\ell_h =| \ln h|$, there holds
\begin{equation*}
 \| u_h(t) - u(t) \|_{L^2(\Om)} +
h  \|\nabla(u_h(t) - u(t))\|_{L^2(\Omega)} \le Ch^{2+q} \ell_h^{2} \|f\|_{L^\infty(0,t;\dH q)}.
\end{equation*}
\end{theorem}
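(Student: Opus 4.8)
The plan is to reuse the error decomposition $u_h-u=\vtht+\rlh$ from \eqref{separate}, since the two ingredients needed to bound its pieces are already in hand. For the projection error $\rlh=P_hu-u$, estimate \eqref{Ph-bound} — obtained by combining the regularity bound \eqref{eq:regeps} of Theorem \ref{thm:reg} with the approximation property of $P_h$ in Lemma \ref{lem:prh-bound} and the calibration $\epsilon=1/\ell_h$ — gives $\|\rlh(t)\|_{L^2(\Om)}+h\|\nabla\rlh(t)\|_{L^2(\Om)}\le C\ell_h h^{2+q}\|f\|_{L^\infty(0,t;\dH q)}$. For the finite element component $\vtht$, Lemma \ref{lem:vtht} supplies $\|\vtht(t)\|_{L^2(\Om)}+h\|\nabla\vtht(t)\|_{L^2(\Om)}\le Ch^{2+q}\ell_h^2\|f\|_{L^\infty(0,t;\dH q)}$. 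Adding the two displays and using $1\le\ell_h$ for $h$ sufficiently small (so the single power of $\ell_h$ is absorbed into $\ell_h^2$), the triangle inequality yields exactly the claimed bound. In this sense the theorem is a one-line consequence of what precedes it.

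The substance sits in Lemma \ref{lem:vtht}, whose argument is the template I would follow were it not already provided: write $\vtht$ through the discrete Duhamel formula \eqref{eqn:vtht} with kernel $\Etilh$, apply the smoothing estimate of Lemma \ref{lem:Eh} with a deliberate loss of $\epsilon$ spatial derivatives so that the resulting temporal weight $(t-s)^{\epsilon\al/2-1}$ is still integrable on $(0,t)$, move the two discrete Laplacians onto $R_hu-P_hu$, pay one further factor $h^{-\epsilon}$ through the inverse inequality of Lemma \ref{lem:inverse} (this is forced because $R_hu-P_hu$ is controlled only in $\dH p$, not in $\dH{p+\epsilon}$), and then bound $R_hu-u$ and $P_hu-u$ via Lemma \ref{lem:prh-bound} in terms of $\|u(s)\|_{\dH{2+q-\epsilon}}$, which \eqref{eq:regeps} estimates. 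The only delicate point is the bookkeeping of $\epsilon$: the accumulated negative power is $h^{-2\epsilon}$, harmless once $\epsilon=1/\ell_h$ since $h^{-2/\ell_h}=e^2$, while two separate factors $\epsilon^{-1}$ — one from \eqref{eq:regeps}, one from $\int_0^t(t-s)^{\epsilon\al/2-1}t^{\epsilon\al/2}\,ds$ — combine to the logarithmic loss $\ell_h^2$ rather than a genuine power of $h$.

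I therefore expect no real obstacle at the level of the theorem itself: given Lemma \ref{lem:vtht} and \eqref{Ph-bound}, the proof is immediate via the splitting and the triangle inequality. (If one wished to treat the complementary range $0<q\le1$, as in Remark \ref{rmk:gal:l2}, I would split instead around the Ritz projection $R_hu$, so that $R_hu-P_hu$ can be estimated without invoking the inverse inequality, producing the almost optimal rate $O(h^2\ell_h^2)$; but that lies outside the stated theorem.)
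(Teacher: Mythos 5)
Your proposal is correct and follows exactly the paper's route: the theorem is obtained by combining the splitting \eqref{separate} with the projection-error bound \eqref{Ph-bound} and the bound on $\vtht$ from Lemma \ref{lem:vtht} via the triangle inequality, and your account of where the two logarithmic factors arise (one $\epsilon^{-1}$ from \eqref{eq:regeps}, one from the singular time integral, with $h^{-2\epsilon}=e^{2}$ under the calibration $\epsilon=1/\ell_h$) matches the paper's argument for that lemma.
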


\begin{remark}
In comparison with the $L^2(0,T;\dH p)$-norm estimates, the $L^\infty(0,T;\dH p)$-norm estimates suffer
from the factor $\ell_h^2$. This is due to Lemma \ref{lem:Eh} and
the regularity estimate in Theorem \ref{thm:reg} (in turn this goes back to Lemma \ref{lem:E}),
which is attributed to the limited smoothing property of the time-fractional differential operator.
\end{remark}

\begin{remark}\label{rmk:gallogh}
An inspection of the proof of Lemma \ref{lem:vtht} indicates that for $0<q\leq1$, one can get rid of one factor $\ell_h$, and hence there holds
\begin{equation*}
 \| u_h(t) - u(t) \|_{L^2(\Om)} + h  \|\nabla(u_h(t) - u(t))\|_{L^2(\Om)} \le Ch^2 \ell_h\|f\|_{L^\infty(0,t;\dH q)}.
\end{equation*}
\end{remark}

\subsection{Problems with more general elliptic operators}\label{general_problems}
We can study problem \eqref{fem} with a more general bilinear form
$a(\cdot, \cdot): V \times V\mapsto \mathbb{R}$ of the form:
\begin{equation}\label{general-form}
 a(u,\chi) = \int_\Om (k(x) \nabla u \cdot \nabla \chi + q(x) u \chi)\, dx,
\end{equation}
where $k(x) $ is a symmetric $d \times d$ matrix-valued measurable function on
the domain $\Omega$ with smooth entries and
$q(x)$ is an $L^\infty(\Omega)$-function. We assume  that
\[
  c_0|\xi|^2\leq \xi^T k(x) \xi \leq c_1 |\xi|^2,\ \for \xi \in {\mathbb R}^d,\  x \in \Om,
\]
where $c_0,c_1>0$ are constants, and the bilinear form $a(\cdot, \cdot)$ is coercive 
on $V \equiv H_0^1(\Om)$. Further, we assume that the problem $a(u,\chi)=(f,\chi), \, \forall \chi \in V$
has a unique solution $u\in V$, with the fully regularity $\|u\|_{H^2(\Omega)} \le C\|f\|_{L^2(\Omega)}$.

Under these conditions we can define a positive definite operator
${\mathcal A}:  H^1_0(\Omega) \to H^{-1}(\Omega),$ which has
a complete set of eigenfunctions $\fy_j(x)$ and respective eigenvalues $\la_j({\mathcal A})>0$.
Then we can define the spaces $\dH q$ as in Section \ref{ssec:represt} and all the equivalent properties are satisfied.
Further, we define the discrete operator ${\mathcal A}_h: X_h \to X_h$ by
\begin{equation*}
  ({\mathcal A}_h \psi,\chi) = a(\psi,\chi), ~~\forall \psi, \chi \in X_h .
\end{equation*}
Then all results for problem \eqref{eq1} can be easily
extended to fractional-order problems with
an elliptic part of this more general form.

\section{Lumped mass method}\label{sec:lump}
In this section, we consider the semidiscrete scheme based on the more practical lumped mass FEM  (see, e.g. \cite[Chapter 15, pp. 239--244]{Thomee97}) and derive related error estimates.

\subsection{Lumped mass FEM}
For completeness we shall first introduce this approximation. Let $z_j^\tau$, $j=1,\dots,d+1$ be
the vertices of the $d$-simplex $\tau \in \mathcal{T}_h$. Consider the quadrature
formula
\begin{equation}\label{quadrature}
Q_{\tau,h}(f) = \frac{|\tau|}{d+1} \sum_{j=1}^{d+1} f(z^\tau_j) \approx \int_\tau f dx.
\end{equation}
We then define an approximation of the $L^2(\Om)$-inner product in $X_h$ by
\begin{equation}\label{h-inner}
(w, \chi)_h = \sum_{\tau \in \T_h}  Q_{\tau,h}(w \chi).
\end{equation}

Then the lumped mass Galerkin FEM is: find $ \luh (t)\in X_h$ such that
\begin{equation}\label{fem-lumped}
 {(\Dal \luh, \chi)_h}+ a(\luh,\chi)= (f, \chi)
\quad \forall \chi\in X_h,\ t >0, \quad
\luh(0)=0.
\end{equation}
The lumped mass method leads to a diagonal mass matrix, and thus enhances the
computational efficiency.

We now introduce the discrete Laplace operator $-\bDelh:X_h\rightarrow X_h$, corresponding to
the approximate $L^2(\Omega)$-inner product $(\cdot,\cdot)_h$, defined by
\begin{equation}\label{eqn:bDelh}
  -(\bDelh\psi,\chi)_h = (\nabla \psi,\nabla \chi)\quad \forall\psi,\chi\in X_h.
\end{equation}
Also, we introduce a projection operator $\Pbarh: L^2(\Om) \rightarrow X_h$ by
$$
(\Pbarh f, \chi)_h = (f, \chi), \quad \forall \chi\in X_h.
$$

Similarly as in Section \ref{sec:galerkin},
we introduce the discrete solution operator $\Ftilh$ by
\begin{equation}\label{eqn:Ftilh}
  \Ftilh f_h(t) = \sum_{j=1}^Nt^{\al-1}E_{\al,\al}(-\bar{\la}_j^h t^\al)(f_h,\bar{\fy}_j^h)_h\bar{\fy}_j^h,
\end{equation}
where $\{\bar{\la}_j^h\}_{j=1}^N$ and $\{\bar{\fy}_j^h\}_{j=1}^N$ are respectively the eigenvalues and
the eigenfunctions of $-\bDelh$ with respect to $(\cdot,\cdot)_h$.
Then with $f_h=\Pbarh f$, the solution $\bar{u}_h$ to problem \eqref{fem-lumped} can be
represented by
\begin{equation}\label{eqn:Duhamel_oo}
  \bar{u}_h(t) = 
  \int_0^t\Ftilh(t-s)f_h(s)ds.
\end{equation}

We need the following modification of the discrete norm \eqref{eqn:normhp}, still denoted
by $\normh {\cdot}{p}$, on  the space $X_h$
\begin{equation}\label{eqn:normhbarp}
\normh{\psi}{p}^2 = \sum_{j=1}^N (\bar{\la}_j^h)^p(\psi,\bar{\fy}_j^h)_h^2\quad \forall p\in\mathbb{R}.
\end{equation}
The following norm equivalence result and inverse estimate are useful for our analysis
\begin{lemma}\label{lem:normequivhbarp}
The norm $\normh {\cdot}{p}$
defined in \eqref{eqn:normhbarp} is equivalent to the norm
$\|\cdot\|_{\dH p}$ on the space $X_h$ for $-1\leq p \leq 1$. Further
for all $\psi \in X_h$ we have
for any real $l>s$
\begin{equation}\label{inverse:hbarp}
  \normh{\psi}{l}\le Ch^{s-l} \normh{\psi}{s}. 
\end{equation}
\end{lemma}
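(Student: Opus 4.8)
The plan is to establish Lemma \ref{lem:normequivhbarp} by following the same strategy used for the standard Galerkin discrete norm in Lemma \ref{lem:inverse}: prove the norm equivalence for the two integer exponents $p=0$ and $p=1$ by hand, extend to $p\in[0,1]$ by interpolation, and then extend to $p\in[-1,0]$ by a duality argument; the inverse inequality will follow from the spectral representation once we control the extreme eigenvalue of $-\bDelh$. First I would treat $p=0$: the definition \eqref{eqn:normhbarp} gives $\normh{\psi}{0}^2=\sum_j(\psi,\bar\fy_j^h)_h^2=(\psi,\psi)_h=\|\psi\|_h^2$, and it is classical (see \cite[Chapter 15]{Thomee97}) that the lumped mass inner product $(\cdot,\cdot)_h$ induces a norm equivalent to $\|\cdot\|_{L^2(\Omega)}=\|\cdot\|_{\dH 0}$ on $X_h$, uniformly in $h$, because on each simplex $\tau$ the quadrature \eqref{quadrature} is exact for linears and the ratio $Q_{\tau,h}(\chi^2)/\int_\tau\chi^2\,dx$ is bounded above and below by absolute constants depending only on $d$. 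Next, for $p=1$: from \eqref{eqn:bDelh} and the eigenstructure one computes $\normh{\psi}{1}^2=\sum_j\bar\la_j^h(\psi,\bar\fy_j^h)_h^2=-(\bDelh\psi,\psi)_h=(\nabla\psi,\nabla\psi)=\|\nabla\psi\|_{L^2(\Omega)}^2=\|\psi\|_{\dH 1}^2$, so in fact the $p=1$ norms coincide exactly. Interpolation between these two endpoints then yields equivalence of $\normh{\cdot}{p}$ and $\|\cdot\|_{\dH p}$ for all $p\in[0,1]$, and a standard duality argument (the $\normh{\cdot}{-p}$ norm is the dual of $\normh{\cdot}{p}$ with respect to $(\cdot,\cdot)_h$, which is in turn equivalent to the $\L2o$ pairing) pushes this down to $p\in[-1,0]$.

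For the inverse inequality \eqref{inverse:hbarp}, I would argue spectrally: by the norm equivalence just proved together with the classical inverse inequality $\|\nabla\chi\|_{L^2(\Omega)}\le Ch^{-1}\|\chi\|_{L^2(\Omega)}$ on quasi-uniform meshes, one gets $\normh{\chi}{1}\le Ch^{-1}\normh{\chi}{0}$, which via the Rayleigh quotient characterization of the largest eigenvalue shows $\bar\la_N^h\le Ch^{-2}$. Writing $\psi=\sum_j(\psi,\bar\fy_j^h)_h\bar\fy_j^h$, for any $l>s$ we bound $\normh{\psi}{l}^2=\sum_j(\bar\la_j^h)^{l-s}(\bar\la_j^h)^s(\psi,\bar\fy_j^h)_h^2\le(\bar\la_N^h)^{l-s}\normh{\psi}{s}^2\le(Ch^{-2})^{l-s}\normh{\psi}{s}^2$, which is exactly \eqref{inverse:hbarp}. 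This part is essentially a repeat of \cite[Lemma 3.3]{Bangti_LZ_2013}.

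I expect the main obstacle to be the $p=0$ equivalence, i.e., showing that the lumped mass inner product $(\cdot,\cdot)_h$ is spectrally equivalent to the $\L2o$ inner product uniformly in $h$ on $X_h$; everything else is either an exact identity ($p=1$), soft functional analysis (interpolation, duality), or a direct consequence ($p\in[-1,0]$ and the inverse estimate). The $p=0$ claim is standard but does require the quasi-uniformity/shape-regularity of $\{\mathcal T_h\}$ and the explicit form of the nodal quadrature; I would either cite \cite[Chapter 15, Lemma 15.1]{Thomee97} directly or, if a self-contained argument is wanted, reduce to a finite-dimensional eigenvalue estimate on the reference simplex and scale. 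Since the paper already invokes Th\'om\'ee's monograph for the lumped mass framework, citing it here is the cleanest route, and the proof of the lemma can be kept to a few lines.
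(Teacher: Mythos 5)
Your proposal is correct and follows essentially the same route as the paper: the paper's (very terse) proof likewise invokes the well-known equivalence at the endpoints $p=0,1$ and then interpolation and duality for $-1\le p\le 1$, with the inverse inequality \eqref{inverse:hbarp} obtained exactly as in \cite[Lemma 3.3]{Bangti_LZ_2013} via the spectral representation and the bound $\bar\la_N^h\le Ch^{-2}$. Your write-up simply supplies the details (the exact identity $\normh{\psi}{1}=\|\nabla\psi\|_{L^2(\Omega)}$ and the spectral equivalence of $(\cdot,\cdot)_h$ with the $L^2(\Omega)$-inner product) that the paper leaves as ``well known.''
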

\begin{proof}
The norm equivalence for $q=0,1$ is well known.
The interpolation and duality arguments show $\normh{\cdot }{p}$ 
defined in \eqref{eqn:normhbarp} is equivalent to
$\|\cdot\|_{\dH p}$  on the space $X_h$ for $ -1\leq p \le 1$.
\end{proof}

\begin{lemma}\label{lem:Ftilh}
Let $\Ftilh$ be defined by \eqref{eqn:Ftilh}. Then we have for $\psi \in X_h$ and all $t>0$,
\begin{equation*}
  \normh{\Ftilh(t) \psi}{p}\leq \left\{\begin{array}{ll}
      Ct^{-1+\al(1+(q-p)/2)}\normh{ \psi }{q}, & p-4\leq q\leq p,\\[1.3ex]
      Ct^{-1+\al}\normh{ \psi }{q},& p<q.
   \end{array}\right.
\end{equation*}
\end{lemma}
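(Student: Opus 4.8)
The plan is to mimic exactly the proof of Lemma~\ref{lem:Eh} (equivalently Lemma~\ref{lem:E}), working with the eigenpair expansion \eqref{eqn:Ftilh} of $\Ftilh$ and the discrete norm \eqref{eqn:normhbarp}. First I would write
\begin{equation*}
  \normh{\Ftilh(t)\psi}{p}^2 = \sum_{j=1}^N (\bar\la_j^h)^p\, \big|t^{\al-1}E_{\al,\al}(-\bar\la_j^h t^\al)\big|^2\, (\psi,\bar\fy_j^h)_h^2,
\end{equation*}
which is legitimate since the $\{\bar\fy_j^h\}$ are orthonormal with respect to $(\cdot,\cdot)_h$. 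The point is that this is formally identical to the expression appearing in the proof of Lemma~\ref{lem:E}, with $\la_j$ replaced by $\bar\la_j^h$, and no other structural feature of the continuous problem is used — only the scalar bound on the Mittag-Leffler function from Lemma~\ref{lem:mlfbdd} and the elementary supremum estimates over the positive reals. Hence the proof transcribes verbatim; this is why the excerpt says ``the proof is identical.''

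For the first case $p-4\le q\le p$, I would factor out $t^{-2+(2+q-p)\al}$ and $(\bar\la_j^h)^q$ to obtain
\begin{equation*}
  \normh{\Ftilh(t)\psi}{p}^2 = t^{-2+(2+q-p)\al}\sum_{j=1}^N \frac{(\bar\la_j^h t^\al)^{p-q}\,\big|(1+(\bar\la_j^h t^\al)^2)E_{\al,\al}(-\bar\la_j^h t^\al)\big|^2}{(1+(\bar\la_j^h t^\al)^2)^2}\,(\bar\la_j^h)^q(\psi,\bar\fy_j^h)_h^2,
\end{equation*}
use $|E_{\al,\al}(-\eta)|\le C/(1+\eta^2)$ (the case $\beta-\al=0\in\mathbb Z^-\cup\{0\}$ of \eqref{M-L-bound}, valid since $-\eta<0$ has $\mathrm{arg}=\pi$), and then invoke $\sup_{\eta>0}\eta^{p-q}/(1+\eta^2)^2\le C$ for $0\le p-q\le 4$. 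What remains is $\sum_j (\bar\la_j^h)^q(\psi,\bar\fy_j^h)_h^2 = \normh{\psi}{q}^2$, giving the claimed bound after taking square roots. For the second case $p<q$, I would instead keep $(\bar\la_j^h)^q$ and write $(\bar\la_j^h)^p = (\bar\la_j^h)^q (\bar\la_j^h t^\al)^{p-q} t^{-(p-q)\al}$, so that
\begin{equation*}
  \normh{\Ftilh(t)\psi}{p}^2 = t^{-2+2\al}\sum_{j=1}^N (\bar\la_j^h t^\al)^{p-q}\,\big|(1+\bar\la_j^h t^\al)E_{\al,\al}(-\bar\la_j^h t^\al)\big|^2 \frac{(\bar\la_j^h)^q(\psi,\bar\fy_j^h)_h^2}{(1+\bar\la_j^h t^\al)^2};
\end{equation*}
here one uses the bound $|E_{\al,\al}(-\eta)|\le C/(1+\eta)$ together with $\sup_{\eta>0}\eta^{p-q}/(1+\eta)^2\le C$, which holds for $p-q<2$ and in particular for $p<q$, again collapsing the sum to $\normh{\psi}{q}^2$ and yielding the factor $t^{-1+\al}$.

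Since all of this is entirely parallel to Lemma~\ref{lem:Eh}, there is no genuine obstacle: the only thing one must check — and it is the analogue of the implicit check in Lemma~\ref{lem:E} — is that the argument $-\bar\la_j^h t^\al$ indeed has $|\mathrm{arg}(z)|=\pi\ge\mu$ so that \eqref{M-L-bound} applies, which is immediate because $\bar\la_j^h>0$ (the discrete operator $-\bDelh$ is positive definite by \eqref{eqn:bDelh}) and $t>0$. Thus I would simply state that the proof is identical to that of Lemma~\ref{lem:Eh} and omit the routine computation, exactly as the authors do for Lemma~\ref{lem:Eh} itself.
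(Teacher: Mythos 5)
Your treatment of the first case ($p-4\le q\le p$) is correct and is exactly the computation the paper intends: it is the verbatim transcription of the proof of Lemma \ref{lem:E} with $\la_j$ replaced by $\bar\la_j^h$ and $(\cdot,\cdot)$ by $(\cdot,\cdot)_h$. The problem is your second case, $p<q$, which is precisely the part \emph{not} covered by Lemma \ref{lem:E}. First, the displayed identity there is wrong: writing $\eta_j=\bar\la_j^h t^\al$, the substitution $(\bar\la_j^h)^{p}=(\bar\la_j^h)^q\,\eta_j^{\,p-q}\,t^{-\al(p-q)}$ forces the prefactor $t^{-2+(2+q-p)\al}$, not $t^{-2+2\al}$; your right-hand side differs from $\normh{\Ftilh(t)\psi}{p}^2$ by the factor $t^{\al(p-q)}$, which is unbounded as $t\to0^+$ when $p<q$. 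Second, and more seriously, the supremum bound you invoke, $\sup_{\eta>0}\eta^{p-q}/(1+\eta)^2\le C$, is false for $p<q$: since $p-q<0$ the quantity blows up as $\eta\to0^+$. (It holds exactly for $0\le p-q\le2$, the opposite regime.) So the $p<q$ estimate does not follow from the scalar bounds you use, and the two errors do not cancel.

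The correct argument for $p<q$ avoids $\eta_j^{\,p-q}$ altogether. Keep
$\normh{\Ftilh(t)\psi}{p}^2=t^{2(\al-1)}\sum_{j}(\bar\la_j^h)^{p-q}\,|E_{\al,\al}(-\eta_j)|^2\,(\bar\la_j^h)^q(\psi,\bar\fy_j^h)_h^2$
and use that the discrete eigenvalues are uniformly bounded below: $\bar\la_j^h\ge\bar\la_1^h\ge c>0$ with $c$ independent of $h$, by the Rayleigh quotient characterization and the uniform equivalence of $(\cdot,\cdot)_h$ with the $L^2(\Omega)$ inner product on $X_h$. Hence $(\bar\la_j^h)^{p-q}\le c^{\,p-q}=C$, and together with $|E_{\al,\al}(-\eta)|\le C$ for $\eta\ge0$ this yields $Ct^{2\al-2}\normh{\psi}{q}^2$, i.e.\ the stated $Ct^{-1+\al}\normh{\psi}{q}$ bound. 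This is why the $p<q$ case appears as a separate line in Lemmas \ref{lem:Eh} and \ref{lem:Ftilh} but is absent from Lemma \ref{lem:E}: it is not literally ``identical'' to the continuous computation and needs the positive lower bound on the spectrum rather than a decay estimate of the Mittag--Leffler kernel.
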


We also need the quadrature error operator $Q_h: X_h\rightarrow X_h$ defined by
\begin{equation}\label{eqn:Q}
  (\nabla Q_h\chi,\nabla \psi) = \epsilon_h(\chi,\psi)
       : = (\chi,\psi)_h-(\chi,\psi)\quad \forall \chi,\psi\in X_h.
\end{equation}
The operator $Q_h$, introduced in \cite{chatzipa-l-thomee12}, represents the quadrature
error (due to mass lumping) in a special way. It satisfies the following error
estimate \cite[Lemma 2.4]{chatzipa-l-thomee12}.
\begin{lemma}\label{lem:Q}
Let $\bDelh$ and $Q_h$ be 
defined by \eqref{eqn:bDelh} and \eqref{eqn:Q},
respectively. Then
\begin{equation*}
  \|\nabla Q_h\chi\|_{L^2(\Omega)}+h\|\bDelh Q_h\chi\|_{L^2(\Omega)} \leq Ch^{p+1}\|\nabla^p\chi\|_{L^2(\Omega)}
\quad \forall \chi\in X_h, ~~~p=0,1.
\end{equation*}
\end{lemma}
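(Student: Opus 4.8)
The plan is to derive everything from a single elementwise quadrature-error estimate and then propagate it through the defining relations \eqref{eqn:bDelh} and \eqref{eqn:Q} with well-chosen test functions. \textbf{Step 1 (local quadrature error).} For $\chi,\psi\in X_h$ and $\tau\in\mathcal{T}_h$ set $\epsilon_{h,\tau}(\chi,\psi):=Q_{\tau,h}(\chi\psi)-\int_\tau\chi\psi\,dx$. I would first show $|\epsilon_{h,\tau}(\chi,\psi)|\le Ch^2\|\nabla\chi\|_{L^2(\tau)}\|\nabla\psi\|_{L^2(\tau)}$. Since the vertex rule \eqref{quadrature} integrates affine functions exactly, the Bramble–Hilbert lemma (applied after scaling $\tau$ to a reference simplex and tracking the powers of $h_\tau$) bounds the error by $Ch_\tau^2$ times the $W^{2,1}(\tau)$-seminorm of the integrand $\chi\psi$. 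Because $\chi,\psi$ are linear on $\tau$, $D^2(\chi\psi)=\nabla\chi\otimes\nabla\psi+\nabla\psi\otimes\nabla\chi$ is constant, so $|\chi\psi|_{W^{2,1}(\tau)}\le C|\tau|\,|\nabla\chi|\,|\nabla\psi|=C\|\nabla\chi\|_{L^2(\tau)}\|\nabla\psi\|_{L^2(\tau)}$, and $h_\tau\le h$ gives the claim.

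\textbf{Step 2 (global quadrature error).} Summing over $\tau$ and using the Cauchy–Schwarz inequality yields $|\epsilon_h(\chi,\psi)|\le Ch^2\|\nabla\chi\|_{L^2(\Omega)}\|\nabla\psi\|_{L^2(\Omega)}$. For the case $p=0$ I would instead apply the inverse inequality $\|\nabla\chi\|_{L^2(\tau)}\le Ch_\tau^{-1}\|\chi\|_{L^2(\tau)}$ at the element level before summing; quasi-uniformity then produces the unified bound $|\epsilon_h(\chi,\psi)|\le Ch^{1+p}\|\nabla^p\chi\|_{L^2(\Omega)}\|\nabla\psi\|_{L^2(\Omega)}$ for $p=0,1$ and all $\chi,\psi\in X_h$.

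\textbf{Step 3 (the two estimates).} Taking $\psi=Q_h\chi$ in \eqref{eqn:Q} gives $\|\nabla Q_h\chi\|_{L^2(\Omega)}^2=\epsilon_h(\chi,Q_h\chi)\le Ch^{1+p}\|\nabla^p\chi\|\,\|\nabla Q_h\chi\|$, hence $\|\nabla Q_h\chi\|_{L^2(\Omega)}\le Ch^{1+p}\|\nabla^p\chi\|_{L^2(\Omega)}$. For the second term, let $\|\cdot\|_h$ denote the norm induced by $(\cdot,\cdot)_h$, which is equivalent to $\|\cdot\|_{L^2(\Omega)}$ on $X_h$ (Lemma \ref{lem:normequivhbarp}). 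By \eqref{eqn:bDelh}, $-(\bDelh Q_h\chi,\phi)_h=(\nabla Q_h\chi,\nabla\phi)=\epsilon_h(\chi,\phi)$ for all $\phi\in X_h$; choosing $\phi=\bDelh Q_h\chi$ and invoking the inverse inequality $\|\nabla\bDelh Q_h\chi\|\le Ch^{-1}\|\bDelh Q_h\chi\|_h$ (Lemma \ref{lem:normequivhbarp} again) gives $\|\bDelh Q_h\chi\|_h^2\le Ch^{1+p}\|\nabla^p\chi\|\,h^{-1}\|\bDelh Q_h\chi\|_h$, i.e.\ $h\|\bDelh Q_h\chi\|_{L^2(\Omega)}\le Ch^{1+p}\|\nabla^p\chi\|_{L^2(\Omega)}$. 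Adding the two bounds completes the proof.

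\textbf{Main obstacle.} The only genuine point is Step 1: one must observe that the mass-lumping quadrature error is controlled by $\nabla\chi$ and $\nabla\psi$ rather than by the full $H^1$-norms. This is what gains the extra power of $h$ and, via the inverse inequality, makes the $p=0$ case work; everything afterwards is routine once one keeps in mind that $(\cdot,\cdot)_h$ and $\|\cdot\|_{L^2(\Omega)}$ are uniformly equivalent on $X_h$, so that $\bDelh$ may be estimated through its induced norm.
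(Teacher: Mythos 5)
The paper does not actually prove this lemma: it simply quotes it from \cite[Lemma 2.4]{chatzipa-l-thomee12}, so there is no internal proof to compare against. Your argument is a correct, self-contained reconstruction of the standard proof (essentially the one in the cited reference and in Thom\'ee's book): the elementwise bound $|\epsilon_{h,\tau}(\chi,\psi)|\le Ch_\tau^2\|\nabla\chi\|_{L^2(\tau)}\|\nabla\psi\|_{L^2(\tau)}$, the inverse inequality to trade one power of $h$ for one derivative of $\chi$ in the $p=0$ case, and then the duality choices $\psi=Q_h\chi$ and $\phi=\bDelh Q_h\chi$ together with $\|\nabla\phi\|_{L^2(\Omega)}\le Ch^{-1}\|\phi\|_{L^2(\Omega)}$ and the uniform equivalence of $(\cdot,\cdot)_h$ with the $L^2$-inner product on $X_h$. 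One small technical caveat in Step 1: invoking Bramble--Hilbert on $W^{2,1}(\tau)$ is not literally licit for a point-evaluation functional when $d=3$, since $W^{2,1}$ does not embed into $C^0$ there; but because the integrand $\chi\psi$ is a quadratic polynomial, the same estimate follows from exactness of the vertex rule on $P_1$ and equivalence of norms on the finite-dimensional quotient $P_2(\hat\tau)/P_1(\hat\tau)$ after scaling, so the conclusion and all subsequent steps stand.
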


The rest of this section is devoted to the error analysis of the lumped mass method for the case $f\in
L^\infty(0,T;\dH q)$, $-1<q\leq1$. The error $\luh(t)-u(t)$ can be split as
$\luh(t)-u(t)=u_h(t) -u(t) + \delta(t)$ with $\delta(t)=\luh(t)-u_h(t)$ and $u_h(t)$
being the standard Galerkin approximation, i.e., the solution of \eqref{fem-operator}. Thus
it suffices to establish proper error bounds for $\delta(t)$.
By the definitions of $u_h(t)$, $\luh(t)$, and $Q_h$ the function $\delta(t)$ satisfies
\begin{equation*}
  \Dal \delta(t) - \bDelh \delta(t) = \bDelh Q_h\Dal u_h(t)
\quad \mbox{ for } \quad T \ge t>0 \quad \mbox{and} \quad \delta(0)=0.
\end{equation*}
By Duhamel's principle \eqref{eqn:Duhamel_oo}, $\delta(t)$ can be expressed as
\begin{equation}\label{splitlump}
  \delta(t) = \int_0^t \Ftilh (t-s)\bDelh Q_h\Dal u_h(s)ds.
\end{equation}
Like before, now we discuss the $L^2$- and $L^\infty$-norm in time error estimates separately.
In the error analysis, the quadrature error operator $Q_h$ and the inverse estimate
play essential role.

\subsection{Error estimates for solutions in $L^2(0,T;\dH p)$}
In this part, we derive an $L^2(0,T;\dH p)$-error estimate, $p=0,1$, for the lumped mass method.
The main result is stated in the following theorem.
\begin{theorem}\label{thm:lump:l2}
Let $f\in L^\infty(0,T;\dH q)$, $-1<q\leq 1$, and $u$ and $u_h$ be the
solutions of \eqref{eq1} and \eqref{fem-lumped} with
$f_h=\bar P_hf$, respectively.  Then there holds
\begin{equation*}
  \begin{aligned}
    \|\nabla (\luh-u)\|_{L^2(0,T;L^2(\Omega))}&\leq Ch^{1+\min(q,0)}\|f\|_{L^2(0,T;\dH q)}, \\
    \|\luh-u\|_{L^2(0,T;L^2(\Omega))}&\leq Ch^{1+q}\|f\|_{L^2(0,T;\dH q)}.\\
  \end{aligned}
\end{equation*}
\end{theorem}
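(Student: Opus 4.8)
The plan is to use the splitting $\luh-u=(u_h-u)+\delta$, with $u_h$ the standard Galerkin solution of \eqref{fem-operator} (with $f_h=P_hf$) and $\delta=\luh-u_h$, exactly as in the paragraph preceding \eqref{splitlump}. For the first term, Theorem \ref{thm:gal:l2} and Remark \ref{rmk:gal:l2} already give
\[
  \|u_h-u\|_{L^2(0,T;L^2(\Om))}+h\|\nabla(u_h-u)\|_{L^2(0,T;L^2(\Om))}\le Ch^{2+\min(q,0)}\|f\|_{L^2(0,T;\dH q)},
\]
and since $2+\min(q,0)\ge 1+q$ and $2+\min(q,0)\ge1+\min(q,0)$ for $-1<q\le1$, this is already stronger than what is claimed. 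So the whole problem reduces to estimating $\delta$, for which I would use the Duhamel formula \eqref{splitlump}, $\delta(t)=\int_0^t\Ftilh(t-s)\bDelh Q_h\Dal u_h(s)\,ds$.

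The first step is a lumped-mass $L^2$-in-time smoothing estimate, the exact analogue of \eqref{Gal-L2-est-1}: for any $g\in L^2(0,T;X_h)$ and $p\in\{0,1\}$,
\[
  \Big\|\int_0^{\cdot}\Ftilh(\cdot-s)\bDelh g(s)\,ds\Big\|_{L^2(0,T;\dH p)}\le C\|g\|_{L^2(0,T;\dH p)}.
\]
This is proved as in Lemma \ref{lem:reg-d}: expand $g(s)$ in the eigenbasis $\{\bar\fy_j^h\}$ of $-\bDelh$, apply Young's inequality for time convolutions, and use the sharp uniform bound $\int_0^T\sigma^{\al-1}E_{\al,\al}(-\bar\la_j^h\sigma^\al)\,d\sigma=(\bar\la_j^h)^{-1}\big(1-E_{\al,1}(-\bar\la_j^hT^\al)\big)\le(\bar\la_j^h)^{-1}$ from \eqref{ineq1}, which cancels precisely the factor $\bar\la_j^h$ coming from $\bDelh$; the norm equivalences of Lemma \ref{lem:normequivhbarp} then give the stated bound. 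Applying this with $g=Q_h\Dal u_h$ yields $\|\delta\|_{L^2(0,T;\dH p)}\le C\|Q_h\Dal u_h\|_{L^2(0,T;\dH p)}$ for $p=0,1$.

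The second step is the estimate $\|Q_h\chi\|_{\dH p}\le Ch^{1+\mu}\|\chi\|_{\dH\mu}$ for $p\in\{0,1\}$, valid for $\mu\in[0,1]$ by Lemma \ref{lem:Q} (together with Poincar\'e to bound $\|Q_h\chi\|_{L^2}$ by $\|\nabla Q_h\chi\|$) and interpolation in $\mu$, and valid for $\mu\in(-1,0)$ and $\chi\in X_h$ after lifting with the inverse inequality of Lemma \ref{lem:normequivhbarp}. Taking $\mu=q$ and $\chi=\Dal u_h(s)\in X_h$ gives $\|Q_h\Dal u_h\|_{L^2(0,T;\dH p)}\le Ch^{1+q}\|\Dal u_h\|_{L^2(0,T;\dH q)}$. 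Finally $\|\Dal u_h\|_{L^2(0,T;\dH q)}\le C\|f\|_{L^2(0,T;\dH q)}$ for $-1<q\le1$: this follows from \eqref{Gal-L2-est-1} with the index $q>-1$, the norm equivalence of $\normh{\cdot}{q}$ and $\|\cdot\|_{\dH q}$ on $X_h$ (Lemma \ref{lem:inverse}), and the stability of $f_h=P_hf$ in $\dH q$ — for $-1\le q\le0$ by Lemma \ref{lem:prh-bound}, and for $0\le q\le1$ by $L^2$- and $H^1$-stability (the latter contained in Lemma \ref{lem:prh-bound}) together with interpolation. Combining the three steps gives $\|\delta\|_{L^2(0,T;L^2(\Om))}+\|\nabla\delta\|_{L^2(0,T;L^2(\Om))}\le Ch^{1+q}\|f\|_{L^2(0,T;\dH q)}$, and adding the Galerkin estimate (using $1+q\ge1+\min(q,0)$) completes the proof.

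The main obstacle — and the reason the $L^2$ rate is the slightly nonoptimal $h^{1+q}$ yet no logarithmic factor appears — lies in the interplay of the last two steps: $\bDelh Q_h$ gains only one power of $h$ relative to the natural norm of its argument (Lemma \ref{lem:Q}), whereas the $L^2$-in-time smoothing of $\Ftilh\bDelh$ is borderline and can be exploited without loss only through the sharp $j$-uniform kernel bound $\int_0^T\sigma^{\al-1}E_{\al,\al}(-\bar\la_j^h\sigma^\al)\,d\sigma\le(\bar\la_j^h)^{-1}$; invoking the pointwise bound of Lemma \ref{lem:Ftilh} instead would force an $\epsilon$/$\ell_h$ trade-off as in the $L^\infty$-in-time analysis. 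Making the exponent come out as exactly $1+q$ uniformly over $-1<q\le1$ then requires marrying interpolation (for $q\ge0$) with the inverse inequality applied to $\Dal u_h\in X_h$ (for $q<0$) and checking that the projection stabilities line up; this bookkeeping, rather than any isolated hard inequality, is where the care goes.
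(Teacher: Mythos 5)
Your proposal is correct and follows essentially the same route as the paper: the splitting $\luh-u=(u_h-u)+\delta$, the Duhamel formula \eqref{splitlump}, the Young's-inequality smoothing estimate in the spirit of Lemma \ref{lem:reg-d} (which reduces everything to $\normh{Q_h\Dal u_h}{p}$), the quadrature-error bounds of Lemma \ref{lem:Q}, and the inverse inequality plus discrete stability to handle $-1<q<0$. The only (harmless) deviation is that you interpolate Lemma \ref{lem:Q} to get $h^{1+q}$ for $\nabla\delta$ when $0<q\le1$, where the paper simply uses the $O(h)$ bound, both being sufficient for the stated rates.
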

\begin{proof}
By repeating the proof of Lemma \ref{lem:reg-d},  we deduce from \eqref{splitlump}
and Lemmas \ref{lem:normequivhbarp} and \ref{lem:Q} that
\begin{equation*}
 \begin{split}
   \int_0^T \| \nabla \delta(t)\|^2_{L^2(\Omega)} dt
& \le C\int_0^T \normh {\bDelh Q_h\Dal u_h(t)}{-1}^2 dt  \le C\int_0^T \normh{ Q_h\Dal u_h(t) }{1}^2 dt\\
& \le C\int_0^T\| \nabla Q_h \Dal u_h(t)\|_{L^2(\Omega)}^2 dt  \leq Ch^{2} \int_0^T \|\Dal u_h(t)\|_{L^2(\Omega)}^2dt.
 \end{split}
\end{equation*}
The desired assertion for the case $q \ge 0$ now follows immediately from Lemma \ref{lem:reg-d}.

For $-1 < q <0$ we use
the inverse estimate of Lemma \ref{lem:inverse},
the stability of the Galerkin solution $u_h$ established in Lemma \ref{lem:reg-d} and the
stability of $P_h$ from Lemma \ref{lem:prh-bound} to get
\begin{equation*}
  \begin{aligned}
   \int_0^T \|\nabla \delta(t)\|^2_{L^2(\Omega)} dt  
                          & = Ch^{2+2q}\int_0^T\normh{\Dal u_h(t)}{q}^2 dt\\
             & \leq Ch^{2+2q}\int_0^T\normh{f_h(t)}{q}^2 dt \leq Ch^{2+2q}\|f\|_{L^2(0,T;\dH q)}^2.
  \end{aligned}
\end{equation*}
Now we turn to the $L^2$-estimate. By repeating the preceding arguments, we arrive at
\begin{equation*}
 \begin{split}
   \int_0^T \|\delta(t)\|^2_{L^2(\Omega)} dt
                    & \le C\int_0^T \normh {\bDelh Q_h\Dal u_h(t)}{-2}^2 dt  = C\int_0^T \tribar {Q_h\Dal u_h(t)} \tribar_{L^2(\Omega)}^2 dt\\
                    & \le C\int_0^T \|\nabla Q_h\Dal u_h(t) \|_{L^2(\Omega)}^2 dt \le Ch^{4}\int_0^T\|\Dal u_h(t)\|_{\dH 1}^2 dt \\
                    & \leq Ch^{2+2q} \int_0^T \normh{\Dal u_h(t)}{q}^2dt.
 \end{split}
\end{equation*}
where the second line follows from the trivial inequality $\|\chi\|_{L^2(\Omega)}\leq
C\|\nabla \chi\|_{L^2(\Omega)}$ for $\chi\in X_h$ and the norm equivalence in Lemma \ref{lem:normequivhbarp}.
The rest of the proof is identical with that in the preceding part, and hence omitted.
\end{proof}

The estimate in $L^2(0,T;L^2(\Omega))$-norm of Theorem \ref{thm:lump:l2} is suboptimal for any $q<1$.
An optimal estimate can be obtained under an additional condition on the mesh.

\begin{theorem}\label{thm:lump:l2l2}
Let the assumptions in Theorem \ref{thm:lump:l2} be fulfilled and the operator $Q_h$ satisfy
\begin{equation}\label{eqn:condQ}
\|Q_h\chi\|_{L^2(\Omega)}\leq Ch^2\|\chi\|_{L^2(\Omega)}\quad \forall\chi \in X_h.
\end{equation}
Then
\begin{equation*}
  \|\luh-u\|_{L^2(0,T;L^2(\Omega))}\leq Ch^{2+\min(q,0)}\|f\|_{L^2(0,T;\dot H^q(\Omega))}.
\end{equation*}
\end{theorem}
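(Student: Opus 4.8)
The plan is to reuse the splitting from the proof of Theorem~\ref{thm:lump:l2}: write $\luh-u=(u_h-u)+\delta$, where $u_h$ is the \emph{standard} Galerkin approximation solving \eqref{fem-operator} with $f_h=P_hf$, and $\delta=\luh-u_h$ is given by the representation \eqref{splitlump}. The first term needs nothing new, since Theorem~\ref{thm:gal:l2} together with Remark~\ref{rmk:gal:l2} already yields
\[
  \|u_h-u\|_{L^2(0,T;L^2(\Omega))}\le Ch^{2+\min(q,0)}\|f\|_{L^2(0,T;\dH q)},
\]
so the whole problem reduces to bounding $\|\delta\|_{L^2(0,T;L^2(\Omega))}$ by the same quantity.

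For $\delta$ I would repeat verbatim the Young's-inequality-for-convolutions estimate used in Lemma~\ref{lem:reg-d} and in the proof of Theorem~\ref{thm:lump:l2}. Since $\int_0^T\bar\la_j^h t^{\al-1}\Mitagaa(-\bar\la_j^h t^\al)\,dt=1-E_{\al,1}(-\bar\la_j^h T^\al)\in(0,1)$ and $\normh{\bDelh\psi}{-2}=\normh{\psi}{0}$, one gets, using the norm equivalence of Lemma~\ref{lem:normequivhbarp},
\[
  \int_0^T\|\delta(t)\|_{L^2(\Omega)}^2\,dt\le C\int_0^T\normh{\bDelh Q_h\Dal u_h(t)}{-2}^2\,dt = C\int_0^T\|Q_h\Dal u_h(t)\|_{L^2(\Omega)}^2\,dt.
\]
This is precisely where the new hypothesis enters: instead of passing through $\|\nabla Q_h\cdot\|_{L^2(\Omega)}$ and Lemma~\ref{lem:Q} (which is what produced the suboptimal rate in Theorem~\ref{thm:lump:l2}), I would apply condition \eqref{eqn:condQ} directly, $\|Q_h\Dal u_h(t)\|_{L^2(\Omega)}\le Ch^2\|\Dal u_h(t)\|_{L^2(\Omega)}$, to obtain
\[
  \int_0^T\|\delta(t)\|_{L^2(\Omega)}^2\,dt\le Ch^4\int_0^T\|\Dal u_h(t)\|_{L^2(\Omega)}^2\,dt.
\]

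It then remains to show $\int_0^T\|\Dal u_h(t)\|_{L^2(\Omega)}^2\,dt\le Ch^{2\min(q,0)}\|f\|_{L^2(0,T;\dH q)}^2$. For $q\ge0$ this is immediate: Lemma~\ref{lem:reg-d} with $p=0$ gives $\int_0^T\|\Dal u_h\|_{L^2(\Omega)}^2\le\int_0^T\|P_hf\|_{L^2(\Omega)}^2$, and $\|P_hf\|_{L^2(\Omega)}\le\|f\|_{L^2(\Omega)}\le C\|f\|_{\dH q}$ by the stability of $P_h$ and the embedding $\dH q\hookrightarrow L^2(\Omega)$. For $-1<q<0$ I would first apply the inverse estimate of Lemma~\ref{lem:inverse}, $\|\Dal u_h(t)\|_{L^2(\Omega)}\le Ch^{q}\normh{\Dal u_h(t)}{q}$, then Lemma~\ref{lem:reg-d} with $p=q$ and the $\dH q$-stability of $P_h$ for $q\in[-1,0]$ (Lemma~\ref{lem:prh-bound}) to get $\int_0^T\|\Dal u_h\|_{L^2(\Omega)}^2\le Ch^{2q}\int_0^T\normh{f_h}{q}^2\,dt\le Ch^{2q}\|f\|_{L^2(0,T;\dH q)}^2$. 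Combining the two cases yields $\|\delta\|_{L^2(0,T;L^2(\Omega))}\le Ch^{2+\min(q,0)}\|f\|_{L^2(0,T;\dH q)}$, and with the bound for $u_h-u$ the theorem follows. There is no real obstacle here: the argument is a direct refinement of the proof of Theorem~\ref{thm:lump:l2} in which the lossy Poincar\'e-type inequality $\|\chi\|_{L^2(\Omega)}\le C\|\nabla\chi\|_{L^2(\Omega)}$ is replaced by the assumed sharper quadrature bound \eqref{eqn:condQ}. The only step requiring a little care is the data bookkeeping for negative $q$, where one must route the estimate through the inverse inequality and the boundedness of $P_h$ on the negative-order space $\dH q$ rather than trying to control $\|P_hf\|_{L^2(\Omega)}$ directly.
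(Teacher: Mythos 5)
Your proposal is correct and follows essentially the same route as the paper: the same splitting $\luh-u=(u_h-u)+\delta$ with the Galerkin bound from Theorem \ref{thm:gal:l2}, the same Young's-inequality estimate reducing $\|\delta\|_{L^2(0,T;L^2(\Omega))}$ to $\normh{\bDelh Q_h\Dal u_h}{-2}=\|Q_h\Dal u_h\|_{L^2(\Omega)}$ up to norm equivalence, the direct application of \eqref{eqn:condQ} in place of Lemma \ref{lem:Q}, and the inverse inequality plus Lemma \ref{lem:reg-d} and the $\dH q$-stability of $P_h$ to handle $-1<q<0$. The only (immaterial) difference is that for $q\ge 0$ you bound $\|P_hf\|_{L^2(\Omega)}$ directly rather than routing through $\normh{\Dal u_h}{q}$ as the paper does.
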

\begin{proof}
It follows from the condition on the operator $Q_h$ that
\begin{equation*}
 \begin{split}
   \int_0^T \|\delta(t)\|^2_{L^2(\Omega)} dt
        & \le C\int_0^T \normh{\bDelh Q_h\Dal u_h}{-2}^2 dt  \le C\int_0^T \tribar Q_h\Dal u_h(t) \tribar^2_{L^2(\Omega)} dt\\
        & \le Ch^4\int_0^T \|\Dal u_h(t) \|_{L^2(\Omega)}^2 dt \le Ch^{4+2\min(q,0)}\int_0^T\normh{ \Dal u_h(t)}{q}^2 dt.
 \end{split}
\end{equation*}
The rest of the proof is identical with that of Theorem \ref{thm:lump:l2}, and this completes the proof.
\end{proof}

\begin{remark}
The condition \eqref{eqn:condQ} on the operator $Q_h$ is satisfied
for symmetric meshes \cite[section 5]{chatzipa-l-thomee12}. In one dimension, the symmetry requirement can
be relaxed to almost symmetry \cite[section 6]{chatzipa-l-thomee12}. In case \eqref{eqn:condQ} does not hold, we were
able to show only a suboptimal $O(h^{1+q})$-convergence rate for the $L^2$-norm of the error, which
is reminiscent of that for the initial value problem for the classical parabolic equation
\cite[Theorem 4.4]{chatzipa-l-thomee12} and time-fractional diffusion problem \cite[Theorem 4.5]{Bangti_LZ_2013}.
\end{remark}

\subsection{Error estimates for solutions in $L^\infty(0,T;\dH p)$}
Now we derive an estimate in $L^\infty(0,T;\dH p)$-norm for the lumped mass approximation $\luh$.
\begin{theorem}\label{thm:lump:linf}
Let $f\in L^\infty(0,T;\dH q)$, $-1<q\leq 1$, and $u$ and $\luh$ be the solutions of
\eqref{eq1} and \eqref{fem-lumped}, respectively, with $\bar f_h=\bar P_hf$. Then
with $\ell_h=|\ln h|$, the following estimates are valid for $t >0$:
\begin{equation}\label{H1-lumpled}
   \begin{aligned}
     \|\nabla(\luh(t)-u(t))\|_{L^2(\Omega)}&\leq Ch^{1+q} 
               \ell_h^2\|f\|_{L^\infty(0,t;\dH q)}
\quad \mbox{for} \quad -1< q \le 0.
   \end{aligned}
\end{equation}
Moreover, for $-1< q \le 1$ we have
\begin{equation}\label{L2-lumpled}
   \begin{aligned}
     \|\luh(t)-u(t)\|_{L^2(\Omega)} &\leq Ch^{1+q}\ell_h^2 \|f\|_{L^\infty(0,t; \dH q)}.
   \end{aligned}
\end{equation}
\end{theorem}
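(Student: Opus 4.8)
The plan is to reduce the theorem to an estimate for $\delta(t)=\luh(t)-u_h(t)$, where $u_h$ is the standard Galerkin solution of \eqref{fem-operator}. Writing $\luh-u=(u_h-u)+\delta$, the difference $u_h-u$ is already controlled: in the $L^2(\Omega)$-norm by $Ch^{2+\min(q,0)}\ell_h^2\|f\|_{L^\infty(0,t;\dH q)}$ (Theorem~\ref{thm:gal:linf} for $-1<q\le0$, Remark~\ref{rmk:gallogh} for $0<q\le1$), and in the gradient norm, for $-1<q\le0$, by $Ch^{1+q}\ell_h^2\|f\|_{L^\infty(0,t;\dH q)}$; in both cases these are $\le Ch^{1+q}\ell_h^2\|f\|_{L^\infty(0,t;\dH q)}$ in the respective ranges of $q$. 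It therefore remains to bound $\|\delta(t)\|_{L^2(\Omega)}$ and $\|\nabla\delta(t)\|_{L^2(\Omega)}$ by $Ch^{1+q}\ell_h^2\|f\|_{L^\infty(0,t;\dH q)}$, starting from the Duhamel formula \eqref{splitlump}, $\delta(t)=\int_0^t\Ftilh(t-s)\bDelh Q_h\Dal u_h(s)\,ds$.

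A preparatory step is a stability bound for $\nabla\Dal u_h$. By \eqref{fem-operator} one has $\Dal u_h=\Delta_h u_h+P_hf$; lifting $u_h$ from $\normh{\cdot}{q+2-\epsilon}$ to $\normh{\cdot}{3}$ by the inverse inequality of Lemma~\ref{lem:inverse}, invoking the smoothing estimate \eqref{Gal-L2-est-2} of Lemma~\ref{lem:reg-d} with $p=q$, and controlling $\|\nabla P_hf\|_{L^2(\Omega)}$ by the inverse inequality together with the stability of $P_h$ (Lemma~\ref{lem:prh-bound}), the choice $\epsilon=\ell_h^{-1}$ yields
\begin{equation*}
  \|\nabla\Dal u_h(t)\|_{L^2(\Omega)}\le C\,\ell_h\,h^{q-1}\,\|f\|_{L^\infty(0,t;\dH q)},\qquad -1<q\le 1 .
\end{equation*}
The first logarithmic factor originates here, from the factor $\epsilon^{-1}$ in \eqref{Gal-L2-est-2}.

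For $\delta$ itself I estimate, for $p=0,1$, $\|\delta(t)\|_{\dH p}$ by $\int_0^t\normh{\Ftilh(t-s)\bDelh Q_h\Dal u_h(s)}{p}\,ds$ (Lemma~\ref{lem:normequivhbarp}), apply Lemma~\ref{lem:Ftilh} with a source index $q'$ and the identity $\normh{\bDelh Q_h\Dal u_h}{q'}=\normh{Q_h\Dal u_h}{q'+2}$, reduce $\normh{Q_h\Dal u_h}{q'+2}$ to $\normh{Q_h\Dal u_h}{1}=\|\nabla Q_h\Dal u_h\|_{L^2(\Omega)}$ via the inverse inequality \eqref{inverse:hbarp}, and then invoke Lemma~\ref{lem:Q} with $p=1$, $\|\nabla Q_h\Dal u_h\|_{L^2(\Omega)}\le Ch^2\|\nabla\Dal u_h\|_{L^2(\Omega)}$, followed by the bound above. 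For $p=0$ one may take $q'=-1$, so no inverse inequality is needed, the time kernel $(t-s)^{-1+\alpha/2}$ supplied by Lemma~\ref{lem:Ftilh} is integrable, and $\int_0^t(t-s)^{-1+\alpha/2}\,ds$ is finite; hence $\|\delta(t)\|_{L^2(\Omega)}\le Ch^{1+q}\ell_h\|f\|_{L^\infty(0,t;\dH q)}\le Ch^{1+q}\ell_h^2\|f\|_{L^\infty(0,t;\dH q)}$, which with the Galerkin estimate above gives \eqref{L2-lumpled}.

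The main obstacle is the gradient estimate, $p=1$. Here the admissibility range of Lemma~\ref{lem:Ftilh} together with the requirement that the resulting time kernel be integrable forces $q'>-1$, so $q'=-1$ is not allowed; taking instead $q'=-1+\epsilon$ produces the kernel $(t-s)^{-1+\alpha\epsilon/2}$, a factor $\epsilon^{-1}$ from $\int_0^t(t-s)^{-1+\alpha\epsilon/2}\,ds$, and a factor $h^{-\epsilon}$ from lowering $\normh{Q_h\Dal u_h}{1+\epsilon}$ to $\normh{Q_h\Dal u_h}{1}$ by \eqref{inverse:hbarp}. Choosing $\epsilon=\ell_h^{-1}$ keeps $h^{-\epsilon}=O(1)$, converts $\epsilon^{-1}$ into the second logarithmic factor, and, combined with the $h^2$ from Lemma~\ref{lem:Q} and the stability bound for $\nabla\Dal u_h$, gives $\|\nabla\delta(t)\|_{L^2(\Omega)}\le Ch^{1+q}\ell_h^2\|f\|_{L^\infty(0,t;\dH q)}$ for all $-1<q\le1$; adding the Galerkin gradient bound (available with the matching rate $h^{1+q}\ell_h^2$ only for $-1<q\le0$, since for $q>0$ the Galerkin gradient error is merely $O(h\ell_h)$) yields \eqref{H1-lumpled}. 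Thus the two $\ell_h$-factors are the combined effect of the limited smoothing of $\Ftilh$, entering once through the stability of $\nabla\Dal u_h$, and of balancing a borderline singular time kernel against the loss incurred by the inverse inequality, which is the delicate point.
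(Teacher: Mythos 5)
Your proposal is correct and follows essentially the same route as the paper: the splitting $\luh-u=(u_h-u)+\delta$, the Duhamel formula \eqref{splitlump}, the smoothing of $\Ftilh$ combined with the quadrature operator bound of Lemma \ref{lem:Q}, inverse inequalities, the discrete stability of Lemma \ref{lem:reg-d}, and the choice $\epsilon=1/\ell_h$. The only difference is bookkeeping — you isolate a pointwise stability bound for $\|\nabla\Dal u_h\|_{L^2(\Omega)}$ and apply Lemma \ref{lem:Q} with $p=1$ throughout, whereas the paper applies Lemma \ref{lem:Q} with $p=0$ for the gradient estimate and interleaves the inverse inequalities; both yield the same rates and logarithmic factors.
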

\begin{proof}
We first note that by the smoothing property of $\Ftilh$ in Lemma \ref{lem:Ftilh} and the inverse
inequality \eqref{inverse:hbarp}, we have for $\chi\in X_h $, $\epsilon>0$, and $p=0,1$
\begin{equation}\label{eqn:lumpbasic1}
  \begin{aligned}
    \normh{ \Ftilh(t)\bDelh Q_h\chi}{p} &\leq Ct^{\epsilon\al/2-1}
                   \normh{ \bDelh Q_h\chi}{p-2+\epsilon}
         = Ct^{\epsilon\al/2-1} \normh{ Q_h\chi}{p+\epsilon}\\
        & \leq Ct^{\epsilon\al/2-1}h^{-\epsilon}\normh{ Q_h \chi}{p}.
  \end{aligned}
\end{equation}

We first prove estimate \eqref{H1-lumpled}. Setting $\chi=\Dal u_h(t)$ in \eqref{eqn:lumpbasic1}
and Lemma \ref{lem:Q} yield
\begin{equation}\label{eqn:lumpbasicI1}
  \begin{aligned}
    \normh{ \Ftilh(t-s)\bDelh Q_h\Dal u_h(s)}{1} &\leq C(t-s)^{\epsilon\al/2-1}h^{-\epsilon} \|Q_h\Dal u_h(s)\|_{\dH 1}\\
        & \le Ch^{1-\ep}(t-s)^{\ep\al/2-1}\|\Dal u_h(s)\|_{L^2(\Omega)}.
  \end{aligned}
\end{equation}
Then it follows from relation \eqref{fem} of Galerkin
approximation $u_h$ and the triangle and inverse inequalities that
\begin{equation*}
  \begin{aligned}
     \|\nabla\delta(t)\|_{L^2(\Omega)} &\leq Ch^{1-\epsilon} \int_0^t (t-s)^{\ep\al/2-1}\|\Dal u_h(s)\|_{L^2(\Omega)}ds \\
     & \leq Ch^{1-\epsilon} \int_0^t (t-s)^{\ep\al/2-1}(\|\Delta_h u_h(s)\|_{L^2(\Omega)}+\|f_h(s)\|_{L^2(\Omega)})ds \\
     & \leq Ch^{1-\epsilon} \int_0^t (t-s)^{\ep\al/2-1}(h^{-\ep}\normh{\Delta_h u_h(s)}{-\ep}+\|f_h(s)\|_{L^2(\Omega)})ds.
  \end{aligned}
\end{equation*}
Further, using the discrete stability estimate in Lemma \ref{lem:reg-d} and
the estimate of $P_h$ in Lemma \ref{lem:prh-bound} we further get for $-1 <q \le 0$
\begin{equation*}
  \begin{aligned}
  \|\nabla\delta(t)\|_{L^2(\Omega)}     & \leq Ch^{1+q-\epsilon} \int_0^t (t-s)^{\ep\al/2-1}(h^{-\ep}\normh{\Delta_h u_h(s)}{q-\ep}+\normh{ f_h(s)}{q})ds \\
     & \leq Ch^{1+q-2\epsilon} \int_0^t(t-s)^{\ep\al/2-1}(\ep^{-1}s^{\ep\al/2}\tribar f_h\tribar_{L^\infty(0,s;\dH q)}
      +\normh{f_h(s)}{q})ds\\
     & \leq C\epsilon^{-1}h^{1+q -2\epsilon} \tribar f_h\tribar_{L^\infty(0,t;\dH q)}\int_0^t(t-s)^{\ep\al/2-1}s^{\ep\al/2} ds \\
     & \leq C\epsilon^{-2}h^{1+q -2\epsilon} \tribar f_h\tribar_{L^\infty(0,t;\dH q)} \leq Ch^{1+q} \ell_h^2 \|f\|_{L^\infty(0,t;\dH q)},
  \end{aligned}
\end{equation*}
where in the last inequality we have chosen $\epsilon=1/\ell_h$. Now \eqref{H1-lumpled}
follows from this and Theorem \ref{thm:gal:linf}.

Next we derive the $L^2$-error estimate. Similar to the derivation of \eqref{eqn:lumpbasic1}, we get
\begin{equation*}
   \begin{aligned}
       \| \Ftilh(t-s)\bDelh Q_h\Dal u_h(s)\|_{L^2(\Omega)} &\leq C (t-s)^{\al/2-1} \|\nabla Q_h\Dal u_h(s)\|_{L^2(\Omega)}\\
        &\le Ch^{2}(t-s)^{\al/2-1}\| \Dal u_h(s) \|_{\dH 1}.
   \end{aligned}
\end{equation*}
Consequently, by the triangle inequality, inverse estimate in Lemma \ref{lem:inverse} and the
discrete stability estimate in Lemma \ref{lem:reg-d}, there holds
\begin{equation*}
   \begin{aligned}
      \|\delta(t)\|_{L^2(\Omega)} &\leq Ch^{2}\int_0^t(t-s)^{\al/2-1}(\|\Delta u_h(s)\|_{\dH 1}+\|P_hf(s)\|_{\dH 1}) ds \\
      &\leq Ch^{2}\int_0^t(t-s)^{\al/2-1}(h^{-\ep}\normh{\Delta u_h(s)}{1-\ep}+\normh{ f_h(s)}{1}) ds \\
      &\leq Ch^{1+q}\int_0^t(t-s)^{\al/2-1}(h^{-\ep}\normh{\Delta u_h(s)}{q-\ep}+\normh{ f_h(s)}{q}) d\tau \\
      &\leq Ch^{1+q}\int_0^t(t-s)^{\al/2-1}(\ep^{-1}h^{-\ep}s^{\ep\al/2}\tribar f_h\tribar_{L^\infty(0,s;\dH q)}+\normh{f_h(s)}{q})ds.
   \end{aligned}
\end{equation*}
The $L^2$-estimate follows directly from Theorem \ref{thm:gal:linf} and
setting $\epsilon=1/\ell_h$ in the above inequality.
\end{proof}

\begin{remark}
For $q>0$, we have $\|\nabla(\luh(t)-u(t))\|_{L^2(\Omega)}\leq Ch\ell_h^2$
and the error cannot be improved even if the function $f$ is smoother.
In view of Remark \ref{rmk:gallogh}, for $0<q\leq1$, the following slightly improved estimate holds
\begin{equation*}
 \|\luh(t)-u(t)\|_{L^2(\Omega)}\leq Ch^{1+q}\ell_h \|f\|_{L^\infty(0,t; \dH q)}.
\end{equation*}
\end{remark}

In the case of $f\in L^\infty(0,T;\dH q)$, $0<q\leq1$, we can obtain an improved
estimate of $\|\nabla \delta(t)\|_{L^2(\Omega)}$:
\begin{equation*}
  \begin{aligned}
     \|\nabla\delta(t)\|_{L^2(\Omega)} &\leq Ch^{2-\epsilon} \int_0^t (t-s)^{\ep\al/2-1}\|\Dal u_h(s)\|_{\dH 1}ds \\
     & \leq Ch^{1+q-\epsilon} \int_0^t (t-s)^{\ep\al/2-1}(h^{-\ep}\normh{\Delta_h u_h(s)}{q-\ep}+\tribar f_h(s)\tribar_{\dH q})ds \\
     & \leq Ch^{1+q-2\epsilon} \int_0^t (t-s)^{\ep\al/2-1}(\ep^{-1}s^{\ep\al/2}\tribar f_h\tribar_{L^\infty(0,s;\dH q)}+\tribar f_h(s)\tribar_{\dH q})ds\\
     & \leq C\epsilon^{-2}h^{1+q-2\epsilon} \|f\|_{L^\infty(0,t;\dH q)} \leq Ch^{1+q} \ell_h^2 \|f\|_{L^\infty(0,t;\dH q)}.
  \end{aligned}
\end{equation*}

  We record this observation in a remark.
\begin{remark}
In the case of a right hand side of intermediate smoothness, i.e., $f\in L^\infty(0,T;\dH q)$, $0< q\leq1$, the gradient
estimate $\|\nabla\delta(t)\|_{L^2(\Omega)}$ can be improved to $(1+q)$th-order at the expense
of an extra factor $\ell_h$:
\begin{equation*}
     \|\nabla\delta(t)\|_{L^2(\Omega)} \leq Ch^{1+q} \ell_h^2 \|f\|_{L^\infty(0,t;\dH q)}.
\end{equation*}
\end{remark}

Like in the case of $L^2(0,T;L^2(\Omega))$-estimate, the $L^\infty(0,T;L^2(\Omega))$ estimate
is suboptimal for any $q\in(-1,1)$, and can
be improved to an almost optimal one by imposing condition \eqref{eqn:condQ}.
\begin{theorem}\label{thm:lump:linfl2}
Let the conditions in Theorem \ref{thm:lump:linf} be fulfilled and the operator $Q_h$ satisfy \eqref{eqn:condQ}, i.e.,
$\|Q_h\chi\|_{L^2(\Omega)}\leq Ch^2\|\chi\|_{L^2(\Omega)}$ for all $\chi \in X_h$.
Then there holds  (with $\ell_h=|\ln h|$)
\begin{equation*}
   \|\luh(t)-u(t)\|_{L^2(\Omega)}\leq Ch^{2+\min(q,0)} \ell_h^2 \|f\|_{L^\infty(0,t;\dH q)}, \quad -1 < q \le 1.
\end{equation*}
\end{theorem}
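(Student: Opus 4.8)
The plan is to use the splitting $\luh(t)-u(t)=(u_h(t)-u(t))+\delta(t)$ with $\delta(t)=\luh(t)-u_h(t)$ and $u_h$ the standard Galerkin solution of \eqref{fem-operator} (so that $f_h=P_hf$ there). The first summand is already controlled: Theorem \ref{thm:gal:linf} gives $\|u_h(t)-u(t)\|_{L^2(\Omega)}\le Ch^{2+q}\ell_h^2\|f\|_{L^\infty(0,t;\dH q)}$ for $-1<q\le 0$, and Remark \ref{rmk:gallogh} gives the better bound $Ch^2\ell_h\|f\|_{L^\infty(0,t;\dH q)}$ for $0<q\le 1$; both are dominated by $Ch^{2+\min(q,0)}\ell_h^2\|f\|_{L^\infty(0,t;\dH q)}$, so the whole matter reduces to estimating $\delta(t)$ in $L^2(\Omega)$.

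For $\delta(t)$ I would start from the Duhamel representation \eqref{splitlump}, $\delta(t)=\int_0^t\Ftilh(t-s)\bDelh Q_h\Dal u_h(s)\,ds$. The point of the new hypothesis \eqref{eqn:condQ} is that it controls $\|Q_h\chi\|_{L^2(\Omega)}$ \emph{directly}, so $\Dal u_h$ need only be measured in $L^2(\Omega)$, whereas in the proof of Theorem \ref{thm:lump:linf} Lemma \ref{lem:Q} forced one to measure $\Dal u_h$ in $\dH 1$ and thereby lose a power of $h$. Since $\bDelh$ shifts the discrete norm by two, one cannot afford to measure $\Ftilh$ at index $-2$ (that would give the nonintegrable weight $(t-s)^{-1}$), so I would spend an $\epsilon>0$: combining Lemma \ref{lem:Ftilh} (with $p=0$ and index $-2+\epsilon$), the inverse inequality \eqref{inverse:hbarp}, the norm equivalence of Lemma \ref{lem:normequivhbarp}, and \eqref{eqn:condQ} gives
\begin{align*}
  \normh{\Ftilh(t-s)\bDelh Q_h\Dal u_h(s)}{0}&\le C(t-s)^{-1+\epsilon\al/2}\,\normh{Q_h\Dal u_h(s)}{\epsilon}\\
  &\le Ch^{2-\epsilon}(t-s)^{-1+\epsilon\al/2}\|\Dal u_h(s)\|_{L^2(\Omega)}.
\end{align*}

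It remains to bound $\|\Dal u_h(s)\|_{L^2(\Omega)}$. Writing $\Dal u_h=\Delta_h u_h+P_hf$ from \eqref{fem-operator}, I would estimate $\|\Delta_h u_h(s)\|_{L^2(\Omega)}=\normh{u_h(s)}{2}$ through the discrete smoothing estimate \eqref{Gal-L2-est-2}: for $-1<q\le 0$ combine it with the inverse inequality of Lemma \ref{lem:inverse}, $\normh{u_h(s)}{2}\le Ch^{q-\epsilon}\normh{u_h(s)}{q+2-\epsilon}\le C\epsilon^{-1}h^{q-\epsilon}s^{\epsilon\al/2}\|f\|_{L^\infty(0,s;\dH q)}$, while for $0<q\le 1$ apply \eqref{Gal-L2-est-2} directly (here the inverse inequality points the wrong way, but $2\le q+2-\epsilon$ for small $\epsilon$ and the discrete eigenvalues are bounded below, so $\normh{u_h(s)}{2}\le C\normh{u_h(s)}{q+2-\epsilon}$); the term $\|P_hf(s)\|_{L^2(\Omega)}$ is handled by the stability of $P_h$ on $\dH q$ (Lemma \ref{lem:prh-bound}) together with the inverse inequality when $q<0$ and with the embedding $\dH q\hookrightarrow L^2(\Omega)$ when $q\ge 0$, so that $\|P_hf(s)\|_{L^2(\Omega)}\le Ch^{\min(q,0)}\|f(s)\|_{\dH q}$. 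Feeding these bounds into the convolution, the $s$-integral stays bounded by $C\epsilon^{-1}$ uniformly for $0<t\le T$ (a Beta integral whose $\epsilon^{-1}$ originates from $\Gamma(\epsilon\al/2)\sim 2/(\epsilon\al)$ as $\epsilon\to0$), whence $\|\delta(t)\|_{L^2(\Omega)}\le C\epsilon^{-2}h^{2+\min(q,0)-2\epsilon}\|f\|_{L^\infty(0,t;\dH q)}$; the choice $\epsilon=1/\ell_h$ — so that $h^{-\epsilon}=e^{|\ln h|/\ell_h}=e$ is bounded and $\epsilon^{-2}=\ell_h^2$ — yields $\|\delta(t)\|_{L^2(\Omega)}\le Ch^{2+\min(q,0)}\ell_h^2\|f\|_{L^\infty(0,t;\dH q)}$, and adding the bound for $u_h-u$ completes the proof.

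I expect the main obstacle to be the $\epsilon$-bookkeeping: one must keep the time-weight exponent $-1+\epsilon\al/2$ strictly above $-1$ while carefully tracking the accumulated factors $h^{-\epsilon}$ (from the inverse inequalities), $\epsilon^{-1}$ (from the smoothing estimate \eqref{Gal-L2-est-2}) and a further $\epsilon^{-1}$ (from the Beta integral), so that after setting $\epsilon=1/\ell_h$ they collapse to exactly $\ell_h^2$ and no uncontrolled power of $h$ survives. A secondary nuisance is that the inverse inequality used for $\normh{u_h(s)}{2}$ only points the right way when $q\le 0$, so the range $0<q\le 1$ has to be routed through \eqref{Gal-L2-est-2} directly; that subcase is in fact easier, since then no negative-order norm of $f$ appears and one may simply invoke $\dH q\hookrightarrow L^2(\Omega)$.
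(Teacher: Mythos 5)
Your proposal is correct and follows essentially the same route as the paper: the same splitting $\luh-u=(u_h-u)+\delta$, the same use of \eqref{eqn:lumpbasic1} with $p=0$ upgraded by \eqref{eqn:condQ} to avoid the $\dH 1$-norm of $\Dal u_h$, the same bound of $\|\Dal u_h(s)\|_{L^2(\Omega)}$ via \eqref{fem-operator}, the inverse inequality, the discrete smoothing estimate \eqref{Gal-L2-est-2} and the stability of $P_h$, and the same choice $\epsilon=1/\ell_h$ to collapse the factors $\epsilon^{-2}h^{-2\epsilon}$ into $\ell_h^2$. Your explicit handling of the subcase $0<q\le 1$ (where the inverse inequality is replaced by monotonicity of the discrete norms) is a correct elaboration of a step the paper leaves implicit in its $h^{\min(q,0)}$ bookkeeping.
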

\begin{proof}
If \eqref{eqn:condQ} holds, then applying \eqref{eqn:lumpbasic1} with $p=0$ we get
\begin{equation*}
  \begin{aligned}
    \|\Ftilh(t-s)\bDelh Q_h\Dal u_h(s)\|_{L^2(\Omega)} &\leq C(t-s)^{\ep\al/2-1}h^{-\epsilon} \|Q_h\Dal u_h(s)\|_{L^2(\Omega)}\\
        & \le Ch^{2-\ep}(t-s)^{\ep\al/2-1}\|\Dal u_h(s)\|_{L^2(\Omega)}.
  \end{aligned}
\end{equation*}
Consequently, this together with the inverse estimate from Lemma \ref{lem:inverse},
the discrete stability estimate in Lemma \ref{lem:reg-d}, and
the stability of $P_h$ in Lemma \ref{lem:prh-bound}, yields
\begin{equation*}
  \begin{aligned}
     \|\delta(t)\|_{L^2(\Omega)} & \leq Ch^{2-\epsilon} \int_0^t (t-s)^{\ep\al/2-1}(\|\Delta_h u_h(s)\|_{L^2(\Omega)}+\|f_h(s)\|_{L^2(\Omega)})ds\\
     & \leq Ch^{2+\min(q,0)-\epsilon} \int_0^t (t-s)^{\ep\al/2-1}(h^{-\ep}\normh{\Delta_h u_h(s)}{-\ep+q}+\normh{ f_h(s)}{q})ds \\
     & \leq Ch^{2+\min(q,0)-\epsilon} \int_0^t (t-s)^{\ep\al/2-1}(h^{-\ep}\ep^{-1}s^{\ep\al/2}\tribar f_h\tribar_{L^\infty(0,s;\dH q)}+\normh {f_h(s)}{q})ds\\
     & \leq C\epsilon^{-2}h^{2+\min(q,0)-2\epsilon} \|f\|_{L^\infty(0,t;\dH q)} \leq Ch^{2+\min(q,0)} \ell_h^2 \|f\|_{L^\infty(0,t;\dH q)},
  \end{aligned}
\end{equation*}
where the last line follows from the choice $\epsilon = 1/\ell_h$. Now the desired assertion
follows immediately from this and Theorem \ref{thm:gal:linf}.
\end{proof}

\section{Numerical results}\label{sec:numerics}

In this section, we present numerical results to verify the theoretical error estimates in
Sections \ref{sec:galerkin} and \ref{sec:lump}. We
present the errors $\|u-\luh\|_{L^\infty(0,T;L^2(\Omega))}$ and $\|\nabla(u-\luh)\|_{L^\infty(0,T;
L^2(\Omega))}$ for the lumped mass method only, since the errors for the Galerkin FEM are almost
identical. In the tables and figures below, we use the following notation convention:  $\|u(t)-\luh(t)
\|_{\dH p}$ for $p=0$ and $p=1$ is simply referred to as $L^2$-norm and $H^1$-norm error, respectively.

\subsection{1-d examples}\label{ssect:1D}
First, we consider \eqref{eq1} for $d=1$ on the unit interval $\Omega=(0,1)$ and
perform numerical tests on the following three data sets:
\begin{enumerate}
 \item[(1a)]  Nonsmooth data: $f(x,t)=(\chi_{[1/2,1]}(t)+1)\chi_{[0,{1/2}]}(x)$,
 where $\chi_S$ is
the characteristic function of the set $S$. The jump at $x=1/2$ leads to
$f(t,\cdot) \notin \dot H^1(\Omega)$; nonetheless,
for any $\epsilon >0$, $f(x,t) \in L^\infty(0,T;H^{{1/2}-\epsilon}(\Omega))$.
\item[(1b)] Very weak data: $f(x,t)=(\chi_{[1/2,1]}(t)+1)\delta_{1/2}(x)$
where $f(x,t)$ involves a Dirac $\delta_{1/2}(x)$-function
concentrated at $x=0.5$.
\item[(1c)] Variable coefficients: in \eqref{general-form}
 we take $k(x)= 3 +\sin(2\pi x)$, $f(x,t)=(\chi_{[1/2,1]}(t)+1)\chi_{[0,1/2]}(x)$,
and $q(x)=0$.
\end{enumerate}

The exact solution for
examples (1a) and (1b) can be explicitly expressed by an infinite series involving
the Mittag-Leffler function $E_{\alpha,\alpha}(z)$ as \eqref{Duhamel2}.
To accurately evaluate the Mittag-Leffler functions,
we employ the algorithm developed in \cite{Seybold:2008}, which is
based on three different approximations of the function, i.e., Taylor series, integral
representations and exponential asymptotics, in different regions of the domain.

In our computation, we divide the unit interval $(0,1)$ into $N+1$ equally spaced subintervals, with a mesh size $h=1/(N+1)$.
The finite element space $X_h$ consists of continuous piecewise linear functions. 
The eigenpairs $(\la^h_j, \fy^h_j(x) )$ and $( \bar\la^h_j, \bar \fy^h_j(x) )$ of the one-dimensional
discrete Laplacian $-\Delta_h$ and $-\bar \Delta_h$, defined by \eqref{eqn:Delh} and \eqref{eqn:bDelh},
respectively, satisfy
\begin{equation*}
  (-\Delta_h \fy^h_j,v)=\la^h_j (\fy^h_j,v) \quad\mbox{ and }\quad
  (-\bar\Delta_h \bar\fy^h_j,v)_h=\bar\la^h_j (\bar\fy^h_j,v)_h \quad \forall v \in X_h.
\end{equation*}
Here $(w,v)$ and $(w,v)_h$ refer to the standard $L^2$-inner product and
the approximate $L^2$-inner product (cf. eq. \eqref{h-inner})
on the space $X_h$, respectively.
Then for $ j=1,\dots,N$, there hold
\begin{equation*}
 \la^h_j= \bar{\la}_j^h /(1 - \tfrac{h^2}{6} \bar{\la}_j^h), ~~ 
  \bar{\la}_j^h=\frac{4}{h^2}\sin^2\frac{\pi j}{2(N+1)},
 ~~\mbox{and} ~~ \fy^h_j(x_k)=\bar\fy^h_j(x_k) = \sqrt2 \sin(j\pi x_k)
\end{equation*}
for $x_k$ being a mesh
point and $ \fy^h_j$ and $ \bar\fy^h_j$
linear over the finite elements. Then the solutions of the standard Galerkin method and lumped mass method can
be computed by \eqref{Duhamel_o} and \eqref{eqn:Duhamel_oo}, respectively.

In the case of (1c), there is no convenient solution representation. To compute the semidiscrete solution,
we have used a direct numerical technique
by first discretizing the time interval, $t_n=n\tau$,
$n=0,1,\dots$, with $\tau$ being the time step size, and then using a weighted
finite difference approximation of the fractional derivative $\Dal u(x,t_n)$
developed in \cite{LinXu:2007}:
\begin{equation*}\label{eqn:timedisc}
  \begin{aligned}
    \Dal u(x,t_n) &= \frac{1}{\Gamma(1-\al)}\sum^{n-1}_{j=0}\int^{t_{j+1}}_{t_j}
       \frac{\partial u(x,s)}{\partial s} (t_n-s)^{-\al}\, ds
    \approx \frac{1}{\Gamma(2-\alpha)}\sum_{j=0}^{n-1}b_j\frac{u(x,t_{n-j})-u(x,t_{n-j-1})}{\tau^\alpha},
  \end{aligned}
\end{equation*}
where the weights $\{b_j\}$ are given by $b_j=(j+1)^{1-\alpha}-j^{1-\alpha}$, $j=0,1,\ldots,n-1$.
If the solution $u(x,t)$ is sufficiently smooth, then the
local truncation error of the finite difference approximation is bounded
by $C\tau^{2-\al}$ for some $C$ depending only
on $u$ \cite[equation (3.3)]{LinXu:2007}. Hence with a small $\tau$,
the fully discrete solution can approximate the semidiscrete solution well.

\subsubsection{Numerical results for example (1a)}
In Tables \ref{tab:nonsmooth1DL2} and \ref{tab:nonsmooth1DLinf} we show
the errors $\|u-\luh\|_{L^2(0,T;\dH p)}$ and $\|u-\luh\|_{L^\infty(0,T;\dH p)}$ for $p=0,1$.
The convergence rates are almost identical for the $L^2$- and $L^\infty$- in time estimates since
both the $L^2(\Omega)$-norm of the error $e=u-\luh$ and its gradient
are bounded independent of the time. Thus in what follows, we only present
the errors $\|u(t)-\luh(t)\|_{L^2(\Omega)}$ and $\|\nabla(u(t)-\luh(t))\|_{L^2(\Omega)}$.
In Figure \ref{fig:nonsmooth_Linf}, we show the plot of the results from
Table \ref{tab:nonsmooth1DLinf} in a log-log scale. The errors are almost
independent of the fractional order $\alpha$, and hence the three curves nearly coincide with each other.
The numerical results fully confirm our theoretical predictions, i.e., $O(h^2)$ and $O(h)$
convergence rates for the $L^2(\Omega)$- and $H^1(\Omega)$-norms of the error, respectively.

\begin{table}[!ht]
\caption{Numerical results, i.e., errors $\|u-\luh\|_{L^2(0,1;L^2(\Omega))}$ ($L^2$-norm) and
$\|u-\luh\|_{L^2(0,1;\dot H^1(\Omega))}$ ($H^1$-norm), for example (1a) (nonsmooth data)
with mesh sizes $h=1/2^k$.}
\label{tab:nonsmooth1DL2}
\begin{center}
     \begin{tabular}{|c|c|c|c|c|c|c|c|}
     \hline
      $\al$ & $k$ & $3$ & $4$ & $5$ &$6$ & $7$ & rate\\
     \hline
     $\al=0.1$ & $L^2$-norm & 9.96e-4 &2.49e-4  &6.23e-5  &1.55e-5  &3.89e-6  & $O(h^{2.00})$ \\
     \cline{2-8}
     & $H^1$-norm           & 2.45e-2 & 1.23e-2 & 6.13e-3 & 3.07e-3 & 1.53e-3 & $O(h^{1.00})$  \\
     \hline
     $\al=0.5$ & $L^2$-norm  & 9.97e-4 &2.50e-4  &6.24e-5  &1.56e-5  &3.90e-6 & $O(h^{2.00})$  \\
     \cline{2-8}
     & $H^1$-norm          & 2.46e-2 & 1.23e-2 & 6.14e-3 & 3.08e-3 & 1.54e-3  & $O(h^{1.00})$   \\
     \hline
     $\al=0.95$ & $L^2$-norm   & 1.01e-3 &2.51e-4  &6.28e-5  &1.57e-5  &3.93e-6 & $O(h^{2.00})$\\
     \cline{2-8}
     & $H^1$-norm           & 2.48e-2 & 1.24e-2 & 6.20e-3 & 3.10e-3 & 1.55e-3  & $O(h^{1.00})$\\
     \hline
     \end{tabular}
\end{center}
\end{table}

\begin{table}[!ht]
\caption{Numerical results, i.e., errors $\| u(t)-\luh(t) \|_{\dH p}$, $p=0,1$ at $t=1$, for
example (1a) (nonsmooth data) with mesh sizes $h=1/2^k$.}
\label{tab:nonsmooth1DLinf}
\begin{center}
     \begin{tabular}{|c|c|c|c|c|c|c|c|}
     \hline
      $\al$ & $k$ & $3$ & $4$ & $5$ &$6$ & $7$ & rate\\
     \hline
     $\al=0.1$ & $L^2$-norm & 9.98e-4 &2.50e-4  &6.24e-5  &1.56e-5  &3.90e-6  & $O(h^{2.00})$ \\
     \cline{2-8}
     & $H^1$-norm           & 2.47e-2 & 1.23e-2 & 6.17e-3 & 3.09e-3 & 1.54e-3 & $O(h^{1.00})$  \\
     \hline
     $\al=0.5$ & $L^2$-norm  & 1.01e-3 &2.51e-4  &6.29e-5  &1.57e-5  &3.93e-6 & $O(h^{2.00})$  \\
     \cline{2-8}
     & $H^1$-norm          & 2.53e-2 & 1.27e-2 & 6.33e-3 & 3.17e-3 & 1.58e-3  & $O(h^{1.00})$   \\
     \hline
     $\al=0.95$ & $L^2$-norm   & 1.04e-3 &2.59e-4  &6.49e-5  &1.62e-5  &4.06e-6 & $O(h^{2.00})$\\
     \cline{2-8}
     & $H^1$-norm           & 2.58e-2 & 1.29e-2 & 6.46e-3 & 3.23e-3 & 1.62e-3  & $O(h^{1.00})$\\
     \hline
     \end{tabular}
\end{center}
\end{table}

\begin{figure}[h!]
\center
  \includegraphics[trim = 1cm .1cm 2cm 0.5cm, clip=true,width=10cm]{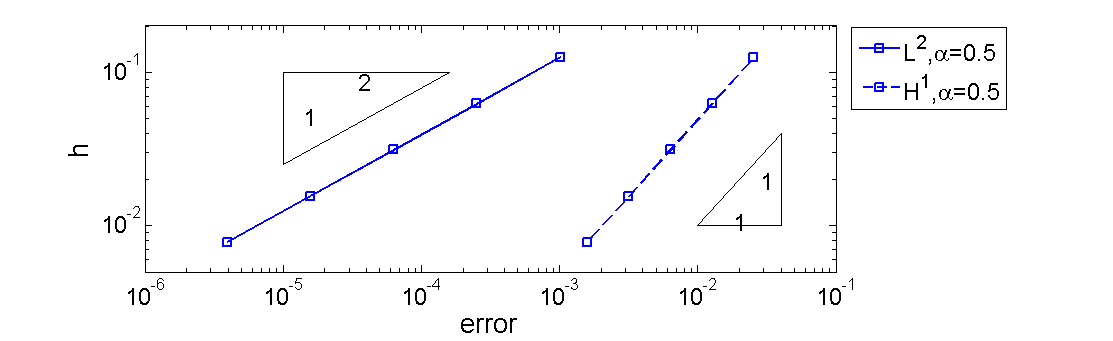}
  \caption{Numerical results, i.e., errors $\|u(t)-\luh(t)\|_{\dH p}$, $p=0,1$ at $t=1$,
  for example (1a) (nonsmooth data) with $\alpha=0.5$.}\label{fig:nonsmooth_Linf}
\end{figure}

\subsubsection{Numerical results for example (1b)}
In this example, we consider the case of very weak data, which involves the
Dirac $\delta$-function. Since the Dirac $\delta$-function $\delta_{1/2}(x)$
is a bounded linear functional on $C_0(0,1)$.
and $H_0^{1/2+\epsilon}(0,1)$ with $\epsilon>0$ embeds continuously
into $C_0(0,1)$, we have $\delta_{1/2}(x) \in H^{-1/2-\epsilon}(\Omega)$.
Thus, the source term $f(x,t)=(\chi_{[1/2,1]}(t)+1)\delta_{1/2}(x) \in
L^{\infty}(0,T;H^{-1/2-\epsilon}(\Omega))$. In Table \ref{tab:weak1D} we show the
error and convergence rates for the lumped mass method for three different $\al$ values.
Here the mesh size is chosen to be $h=1/(2^k+1)$, and thus the Dirac $\delta$ function
is not aligned with the grid. The results indicate an $O(h^{1/2})$ and $O(h^{3/2})$ convergence
rate for the $H^1(\Omega)$- and $L^2(\Omega)$-norm of the error, respectively, which agrees well with the
theoretical prediction. In Table \ref{tab:weak1Dsuper}, we report the results for the case
that the $\delta$-function is supported at a grid point. We observe that the method converges
at the expected rate in $H^1(\Omega)$-norm, while the convergence rate in the $L^2(\Omega)$-norm is $O(h^2)$,
i.e., the method exhibits a superconvergence of one half order, which theoretically remains to be
established. The plots of the results for $\alpha=0.5$ in Tables \ref{tab:weak1D} and
\ref{tab:weak1Dsuper} are shown respectively in Figures \ref{fig:weak1D} and \ref{fig:weak1Dsuper}
in a log-log scale.

\begin{table}[!ht]
\caption{Numerical results, i.e., errors $\|u(t)-\luh(t) \|_{\dH p}$, $p=0,1$ at $t=1$, for
example (1b) (Dirac $\delta$-function) with mesh sizes $h=1/(2^k+1)$.}
\label{tab:weak1D}
\begin{center}
     \begin{tabular}{|c|c|c|c|c|c|c|c|}
     \hline
      $\al$ & $k$ & $3$ & $4$ & $5$ &$6$ & $7$ & rate\\
     \hline
     $\al=0.1$ & $L^2$-norm & 5.35e-3 & 2.07e-3  & 7.67e-4 &  2.78e-4 &  9.94e-5  & $O(h^{1.44})$ \\
     \cline{2-8}
     & $H^1$-norm           & 1.56e-1   &1.14e-1   &8.23e-2  &5.88e-2   &4.17e-2 & $O(h^{0.48})$  \\
     \hline
     $\al=0.5$ & $L^2$-norm  & 5.38e-3 & 2.08e-3  & 7.68e-4 &  2.78e-4 &  9.95e-5 & $O(h^{1.44})$  \\
     \cline{2-8}
     & $H^1$-norm           & 1.57e-1   &1.15e-1   &8.25e-2  &5.89e-2   &4.17e-2 & $O(h^{0.48})$   \\
     \hline
     $\al=0.95$ & $L^2$-norm   & 5.39e-3 & 2.08e-3  & 7.69e-4 &  2.79e-4 &  9.95e-5 & $O(h^{1.44})$\\
     \cline{2-8}
     & $H^1$-norm           & 1.58e-1   &1.15e-1   &8.26e-2  &5.89e-2   &4.18e-2& $O(h^{0.48})$\\
     \hline
     \end{tabular}
\end{center}
\end{table}
\begin{figure}[h!]
\center
  \includegraphics[trim = 1cm .1cm 2cm 0.5cm, clip=true,width=10cm]{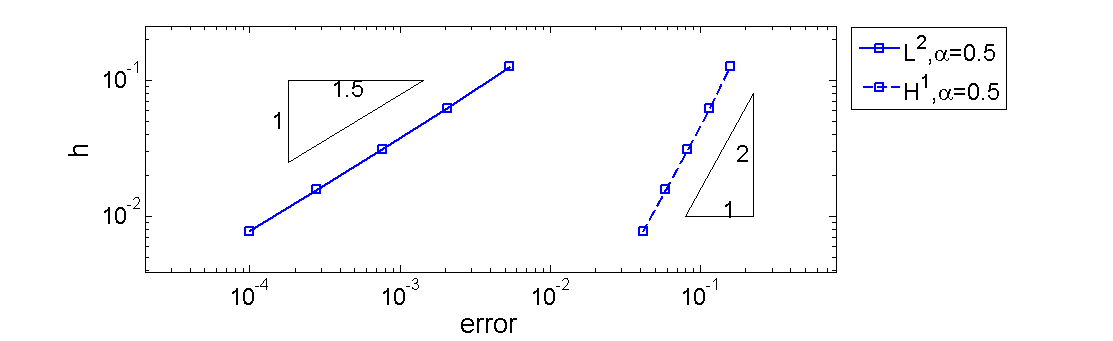}
  \caption{Numerical results, i.e., errors $\|u(t)-\luh(t)\|_{\dH p}$, $p=0,1$ at $t=1$, for example (1b) (Dirac
  $\delta$-function) with $\alpha=0.5$. The Dirac $\delta$-function is not aligned with grid points.}\label{fig:weak1D}
\end{figure}

\begin{table}[!ht]
\caption{Numerical results, errors $\|u(t)-\luh(t) \|_{\dH p}$, $p=0,1$ at $t=1$,
for example (1b) (Dirac $\delta$ function) with mesh sizes $h=1/2^k$.}
\label{tab:weak1Dsuper}
\begin{center}
     \begin{tabular}{|c|c|c|c|c|c|c|c|}
     \hline
      $\al$ & $k$ & $3$ & $4$ & $5$ &$6$ & $7$ & rate\\
     \hline
     $\al=0.1$ & $L^2$-norm & 1.59e-4 &3.93e-5  &9.80e-6  &2.45e-6  &6.15e-7  & $O(h^{2.00})$ \\
     \cline{2-8}
     & $H^1$-norm           & 5.46e-2 & 3.91e-2 & 2.78e-2 & 1.97e-3 & 1.39e-3 & $O(h^{0.49})$  \\
     \hline
     $\al=0.5$ & $L^2$-norm  & 7.12e-5 &1.76e-5  &4.37e-6  &1.10e-6  &2.76e-7 & $O(h^{2.00})$  \\
     \cline{2-8}
     & $H^1$-norm          & 5.53e-2 & 3.93e-2 & 2.79e-2 & 1.97e-3 & 1.40e-3  & $O(h^{0.50})$   \\
     \hline
     $\al=0.95$ & $L^2$-norm   & 4.91e-5 &1.21e-5  &3.03e-6  &7.66e-7  &1.96e-7 & $O(h^{2.02})$\\
     \cline{2-8}
     & $H^1$-norm           & 5.60e-2 &3.96e-2 & 2.80e-2 & 1.98e-3 & 1.40e-3& $O(h^{0.50})$\\
     \hline
     \end{tabular}
\end{center}
\end{table}

\begin{figure}[h!]
\center
  \includegraphics[trim = 1cm .1cm 2cm 0.5cm, clip=true,width=10cm]{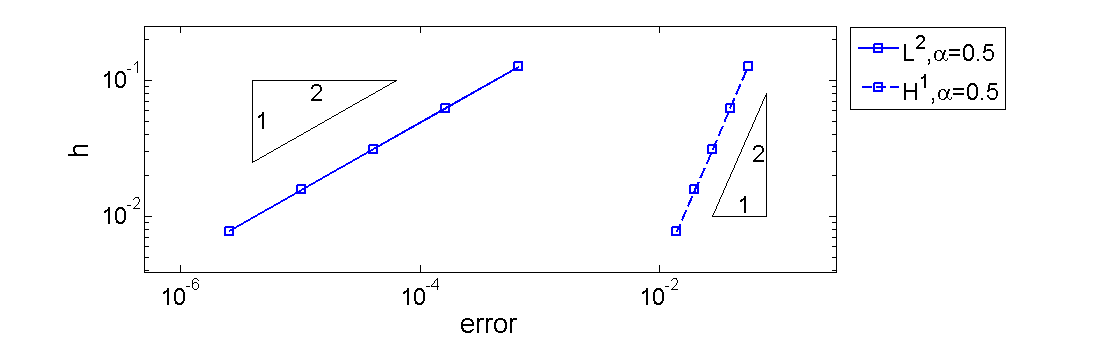}
  \caption{Numerical results, i.e., errors $\| u(t)-\luh(t)\|_{\dH p}$, $p=0,1$, $t=1$,
  for example (1b) (Dirac $\delta$-function) with $\alpha=0.5$. The Dirac $\delta$-function
  is aligned with grid points.}\label{fig:weak1Dsuper}
\end{figure}

\subsubsection{Numerical results for example (1c)}
Here the coefficient is variable, and the discrete solution $\luh$ does not
have a convenient explicit representation. Hence, we apply the fully discrete scheme with
finite difference discretization on time by \eqref{eqn:timedisc}.
The benchmark solution $u$ is computed on a very fine mesh, i.e., with a spacial mesh size
$h=1/512$ and a time step size $\tau=1.0 \times 10^{-5}$. The
$L^2(\Omega)$- and $H^1(\Omega)$-norms of the error are reported in Table \ref{tab:variable} at $t=1$ for $\al=0.5$.
The results confirm the discussions in Section \ref{general_problems}.

\begin{table}[h!]
\caption{Numerical results, i.e., errors $\|u(t)-\luh(t)\|_{\dH p}$, $p=0,1$, at $t=1$, for
example (1c) (variable coefficients) with $\al=0.5$ and
mesh sizes $1/2^k$.}\label{tab:variable}
\begin{center}
     \begin{tabular}{|c|ccccc|c|}
     \hline
     $h$ & $1/8$ & $1/16$ & $1/32$& $1/64$& $1/128$& rate \\
     \hline
     $L^2$-error & 8.42e-4 & 2.14e-4 & 5.38e-5 & 1.34e-5 & 3.30e-6 & $O(h^{2.00})$ \\
     $H^1$-error & 1.76e-2 & 8.89e-3 & 4.45e-3 & 2.21e-3 & 1.08e-3 & $O(h^{1.01})$ \\
     \hline
     \end{tabular}
\end{center}
\end{table}

\subsection{2-d Examples}\label{ssect:2D}
Now we consider \eqref{eq1} for $d=2$ on the unit square $\Om=[0,1]^2$
for the following data:
\begin{enumerate}
\item[(2a)]  Nonsmooth data: $ f(x,t) = (\chi_{[1/2,1]}(t)+1)\chi_{[1/4,3/4]\times[1/4,3/4]}$.

\item[(2b)] Very weak data: $f(x,t)=(\chi_{[1/2,1]}(t)+1)\delta_\Gamma $ with $\Gamma$ being
the boundary of the square $[1/4,3/4]\times[1/4,3/4]$ with $\langle \delta_\Gamma,\phi\rangle
= \int_\Gamma \phi(s) ds$. One may view $(v,\chi)$ for $\chi \in X_h \subset\dot H^{1/2+\epsilon}(\Om)$
as duality pairing between the spaces $H^{-1/2-\epsilon}(\Om)$ and $\dot H^{1/2+\epsilon}(\Om)$
for any $\epsilon >0$ so that $\delta_\Gamma \in H^{-1/2-\epsilon}(\Omega)$. Indeed, it follows
from H\"{o}lder's inequality and the continuity of the trace operator
from $\dot H^{{1/2}+\epsilon}(\Omega)$ to $L^2(\Gamma)$ \cite{AdamsFournier:2003}
 that
\begin{equation*}
  \begin{aligned}
   \|\delta_\Gamma\|_{H^{-1/2-\epsilon}(\Omega)}&= \sup_{\phi \in\dot H^{1/2+\epsilon}(\Omega)}
   \frac{|\int_\Gamma \phi(s)ds|}{\|\phi\|_{\dot H^{1/2+\epsilon}(\Omega)}}
   \le  |\Gamma|^{1/2} \sup_{\phi \in\dot H^{1/2+\epsilon} (\Omega)}
      \frac{\|\phi\|_{L^2(\Gamma)}}{\|\phi\|_{\dot H^{1/2+\epsilon}(\Omega)}} \le C.
 \end{aligned}
 \end{equation*}
\end{enumerate}
To discretize the problem, we divide $(0,1)$ into
$N=2^k$ equally spaced subintervals with a mesh size $h=1/N$ so that unit square $(0,1)^2$ is divided into $N^2$
small squares. We get a symmetric triangulation of the domain $(0,1)^2$ by connecting the
diagonal of each small square.
Therefore, the lumped mass method and standard Galerkin method have the same
convergence rates. On these meshes, $\bar \la^h_{n,m}$ and
$\bar \fy^h_{n,m}$, $1 \le n,m \le N-1$,
i.e., eigenvalues and eigenvectors
of the discrete Laplacian $\bar \Delta_h$, are explicitly given by:
\begin{equation*}
    \bar\la^h_{n,m}= \frac{4}{h^2} \left(\sin^2\frac{n\pi h}{2} + \sin^2\frac{m\pi h}{2} \right), \quad
    \bar\fy^h_{n,m}(x_i,y_j)= 2\sin(n\pi x_i)\sin(m\pi y_j).
\end{equation*}
respectively, where $(x_i,y_j)$, $i,j=1,\ldots,N-1$, is a mesh point. Then the semidiscrete approximation can
be computed via the explicit representation \eqref{eqn:Duhamel_oo}.

\subsubsection{Numerical results for example (2a)}
In this example the right hand side data $f(x,t)$ is in the space
$L^{\infty}(0,1;\dot H^{1/2-\ep}(\Om))$ with $\ep > 0$, and
the numerical results were computed at $t=1$ for $\al=0.1,0.5$ and $0.95$;
see Table \ref{tab:nonsmooth2D} and Figure \ref{fig:nonsmooth2D}.
The slopes of the error curves in a log-log
scale are $2$ and $1$, respectively, for $L^2(\Omega)$- and $H^1(\Omega)$-norm of the errors,
which agrees well with the theoretical results for the nonsmooth case.
\begin{table}[!ht]
\caption{Numerical results, i.e., errors $\|u(t)-\luh(t) \|_{\dH p}$, $p=0,1$ at $t=1$,
for example (2a) (nonsmooth data) with  mesh sizes $h=1/2^k$ in either direction.}
\label{tab:nonsmooth2D}
\begin{center}
     \begin{tabular}{|c|c|c|c|c|c|c|c|}
     \hline
      $\al$ & $k$ & $3$ & $4$ & $5$ &$6$ & $7$ & rate\\
     \hline
     $\al=0.1$ & $L^2$-norm & 9.66e-4 & 2.48-4  & 6.26e-5 &  1.57e-5 &  3.93e-6  & $O(h^{1.99})$ \\
     \cline{2-8}
     & $H^1$-norm           & 2.06e-2   &1.04e-2   &5.24e-3  &2.63e-3   &1.31e-3 & $O(h^{0.99})$  \\
     \hline
     $\al=0.5$ & $L^2$-norm  & 9.82e-4 & 2.52-4  & 6.36e-5 &  1.59e-5 &  3.99e-6 & $O(h^{1.99})$  \\
     \cline{2-8}
     & $H^1$-norm           & 2.10e-2   &1.07e-2   &5.35e-3  &2.68e-3   &1.34e-3 & $O(h^{0.99})$   \\
     \hline
     $\al=0.95$ & $L^2$-norm   & 9.82e-4 & 2.52-4  & 6.36e-5 &  1.61e-5 &  4.02e-6  & $O(h^{1.99})$\\
     \cline{2-8}
     & $H^1$-norm           &2.13e-2   &1.08e-2   &5.42e-3  &2.71e-3   &1.36e-3& $O(h^{0.99})$\\
     \hline
     \end{tabular}
\end{center}
\end{table}

\begin{figure}[h!]
\center
  \includegraphics[trim = 1cm .1cm 2cm 0.5cm, clip=true,width=10cm]{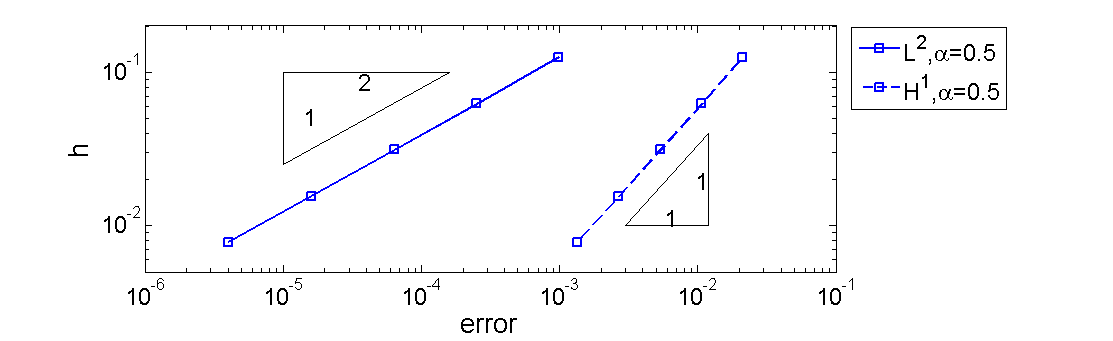}
  \caption{Numerical results, i.e., errors $\|u(t)-\luh(t)\|_{\dH p}$, $p=0,1$, at $t=1$,
   for example (2a) (nonsmooth data) with $\alpha=0.5$.}\label{fig:nonsmooth2D}
\end{figure}

\subsubsection{Numerical results for example (2b)}
In Table \ref{tab:weak2D}, we present the $L^2(\Omega)$- and $H^1(\Omega)$-norms of the error for this example.
The $H^1(\Omega)$-norm of the error decays at the theoretical rate, however the $L^2(\Omega)$-norm of
the error exhibits superconvergence. This is attributed to the fact that the boundary $\Gamma$
is fully aligned with element edges. In contrast, if we choose $P_h f$ as the discrete right
hand side for the lumped mass semidiscrete problem instead of $\bar P_h f$, then
the $L^2(\Omega)$-norm of the error converges only at the standard order; see Table \ref{tab:weak2D2}.

\begin{table}[!ht]
\caption{Numerical results, i.e., errors $\|u(t)-\luh(t)\|_{\dH p}$, $p=0,1$ at $t=1$, for
example (2b) (Dirac $\delta$-function) with mesh sizes $h=1/2^k$ in either direction.}
\label{tab:weak2D}
\begin{center}
     \begin{tabular}{|c|c|c|c|c|c|c|c|}
     \hline
      $\al$ & $k$ & $3$ & $4$ & $5$ &$6$ & $7$ & rate\\
     \hline
     $\al=0.1$ & $L^2$-norm & 4.61e-3 & 1.25e-3  & 3.31e-4 &  8.68e-5 &  2.39e-5  & $O(h^{1.90})$ \\
     \cline{2-8}
     & $H^1$-norm           & 1.60e-1   &9.92e-2   &6.43e-2  &4.43e-2   &3.16e-2 & $O(h^{0.58})$  \\
     \hline
     $\al=0.5$ & $L^2$-norm  & 4.67e-3 & 1.26e-3  & 3.34e-4 &  8.76e-5 &  2.40e-5   & $O(h^{1.91})$  \\
     \cline{2-8}
     & $H^1$-norm          & 1.60e-1   &9.92e-2   &6.44e-2  &4.50e-2   &3.17e-2 & $O(h^{0.58})$   \\
     \hline
     $\al=0.95$ & $L^2$-norm  & 4.70e-3 & 1.27e-3  & 3.36e-4 &  8.81e-5 &  2.42e-5    & $O(h^{1.91})$\\
     \cline{2-8}
     & $H^1$-norm          & 1.61e-1   &9.98e-2   &6.46e-2  &4.50e-2   &3.17e-2& $O(h^{0.58})$\\
     \hline
     \end{tabular}
\end{center}
\end{table}

\begin{table}[!ht]
\caption{Numerical results, i.e., errors $\|u(t)-\luh(t)\|_{\dH p}$, $p=0,1$, at $t=1$, for example (2b)
(Dirac $\delta$-function) with $f_h=P_h f$ and mesh sizes $h=1/2^k$ in either direction.}
\label{tab:weak2D2}
\begin{center}
     \begin{tabular}{|c|c|c|c|c|c|c|c|}
     \hline
      $\al$ & $k$ & $3$ & $4$ & $5$ &$6$ & $7$ & rate\\
     \hline
     $\al=0.5$ & $L^2$-norm  & 1.19e-2 & 4.55e-3  & 1.69e-3 &  6.15e-4 &  2.22e-4   & $O(h^{1.44})$  \\
     \cline{2-8}
     & $H^1$-norm          & 3.28e-1   &2.32e-1   &1.67e-1  &1.13e-1   &8.21e-2 & $O(h^{0.50})$   \\
     \hline
     \end{tabular}
\end{center}
\end{table}

\section{conclusion}

In this paper, we have studied two semidiscrete finite element schemes, i.e.,
the semidiscrete Galerkin method and the lumped mass method, for
the time-fractional diffusion problem with a nonsmooth right hand side $f\in L^\infty(0,T;\dH q)$, with $-1<q\leq 1$. We
derived almost optimal estimates of the error and its gradient for the Galerkin method, and also almost optimal
estimates of the gradient of the error for the lumped mass method. Optimal estimates  for the $L^2(\Omega)$-norm of the error
can only be shown under certain restrictions on the mesh such that condition \eqref{eqn:condQ} is fulfilled.
There are several possible extensions of the work. First, the error estimates are expected to be useful for
analyzing relevant inverse problems, which will be studied in a future work. Second, it is natural to study the fully
discrete scheme, e.g., with finite difference or discontinuous Galerkin method in time.

\section*{Acknowledgments}
The research of B. Jin has been support by NSF Grant DMS-1319052, R. Lazarov and Z. Zhou was supported in
parts by US NSF Grant DMS-1016525 and J. Pasciak has been supported by NSF Grant
DMS-1216551. The work of all authors has been  supported in parts also  by Award No.
KUS-C1-016-04, made by King Abdullah University of Science and Technology (KAUST).

\bibliographystyle{abbrv}
\bibliography{frac}

\begin{thebibliography}{10}

\bibitem{AdamsGelhar:1992}
E.~E. Adams and L.~W. Gelhar.
\newblock Field study of dispersion in a heterogeneous aquifer: 2. spatial
  moments analysis.
\newblock {\em Water Res. Research}, 28(12):3293–--3307, 1992.

\bibitem{AdamsFournier:2003}
R.~A. Adams and J.~J.~F. Fournier.
\newblock {\em Sobolev {S}paces}.
\newblock Elsevier/Academic Press, Amsterdam, second edition, 2003.

\bibitem{Berkowitz:2002}
B.~Berkowitz, J.~Klafter, R.~Metzler, and H.~Scher.
\newblock Physical pictures of transport in heterogeneous media:
  Advection-dispersion, random-walk, and fractional derivative formulations.
\newblock {\em Water Res. Research}, 38(10):9--1--9--12, 2002.

\bibitem{chatzipa-l-thomee12}
P.~Chatzipantelidis, R.~Lazarov, and V.~Thom{\'e}e.
\newblock Some error estimates for the lumped mass finite element method for a
  parabolic problem.
\newblock {\em Math. Comp.}, 81(277):1--20, 2012.

\bibitem{ChengNakagawaYamamoto:2009}
J.~Cheng, J.~Nakagawa, M.~Yamamoto, and T.~Yamazaki.
\newblock Uniqueness in an inverse problem for a one-dimensional fractional
  diffusion equation.
\newblock {\em Inverse Problems}, 25(11):115002, 1--16, 2009.

\bibitem{GionaCerbeliRoman:1992}
M.~Giona, S.~Cerbelli, and H.~E. Roman.
\newblock Fractional diffusion equation and relaxation in complex viscoelastic
  materials.
\newblock {\em Phys. A}, 191(1--4):449--453, 1992.

\bibitem{HatanoHatano:1998}
Y.~Hatano and N.~Hatano.
\newblock Dispersive transport of ions in column experiments: An explanation of
  long-tailed profiles.
\newblock {\em Water Res. Research}, 34(5):1027--1033, 1998.

\bibitem{JinLazarovPasciakZhou:2013}
B.~Jin, R.~Lazarov, J.~Pasciak, and Z.~Zhou.
\newblock Galerkin fem for fractional order parabolic equations with initial
  data in {$H^{-s},~0\le s \le 1$}.
\newblock Proc. 5th Conf. Numer. Anal. Appl. (June 15-20, 2012), Springer, in
  press, 2013.

\bibitem{Bangti_LZ_2013}
B.~Jin, R.~Lazarov, and Z.~Zhou.
\newblock Error estimates for a semidiscrete finite element method for
  fractional order parabolic equations.
\newblock {\em SIAM J. Numer. Anal.}, 51(1):445--466, 2013.

\bibitem{JinRundell:2012b}
B.~Jin and W.~Rundell.
\newblock An inverse problem for a one-dimensional time-fractional diffusion
  equation.
\newblock {\em Inverse Problems}, 28(7):075010, 19 pp., 2012.

\bibitem{KeungZou:1998}
Y.~L. Keung and J.~Zou.
\newblock Numerical identifications of parameters in parabolic systems.
\newblock {\em Inverse Problems}, 14(1):83--100, 1998.

\bibitem{KilbasSrivastavaTrujillo:2006}
A.~Kilbas, H.~Srivastava, and J.~Trujillo.
\newblock {\em Theory and {A}pplications of {F}ractional {D}ifferential
  {E}quations}.
\newblock Elsevier, Amsterdam, 2006.

\bibitem{LiXu:2009}
X.~Li and C.~Xu.
\newblock A space-time spectral method for the time fractional diffusion
  equation.
\newblock {\em SIAM J. Numer. Anal.}, 47(3):2108--2131, 2009.

\bibitem{LinXu:2007}
Y.~Lin and C.~Xu.
\newblock Finite difference/spectral approximations for the time-fractional
  diffusion equation.
\newblock {\em J. Comput. Phys.}, 225(2):1533--1552, 2007.

\bibitem{McLean-Thomee-IMA}
W.~{McLean} and V.~Thom{\'e}e.
\newblock Maximum-norm error analysis of a numerical solution via {L}aplace
  transformation and quadrature of a fractional-order evolution equation.
\newblock {\em IMA J. Numer. Anal.}, 30(1):208--230, 2010.

\bibitem{McLeanThomee:2010}
W.~{McLean} and V.~Thom{\'e}e.
\newblock Numerical solution via {L}aplace transforms of a fractional order
  evolution equation.
\newblock {\em J. Int. Eq. Appl.}, 22(1):57--94, 2010.

\bibitem{MontrollWeiss:1965}
E.~W. Montroll and G.~H. Weiss.
\newblock Random walks on lattices. {II}.
\newblock {\em J. Math. Phys.}, 6(2):167--181, 1965.

\bibitem{Mustapha:2011}
K.~Mustapha.
\newblock An implicit finite-difference time-stepping method for a
  sub-diffusion equation, with spatial discretization by finite elements.
\newblock {\em IMA J. Numer. Anal.}, 31(2):719--739, 2011.

\bibitem{Nigmatulin}
R.~Nigmatulin.
\newblock The realization of the generalized transfer equation in a medium with
  fractal geometry.
\newblock {\em Phys. Stat. Sol. B}, 133:425--430, 1986.

\bibitem{Podlubny_book}
I.~Podlubny.
\newblock {\em Fractional {D}ifferential {E}quations}.
\newblock Academic Press, San Diego, CA, 1999.

\bibitem{Sakamoto_2011}
K.~Sakamoto and M.~Yamamoto.
\newblock Initial value/boundary value problems for fractional diffusion-wave
  equations and applications to some inverse problems.
\newblock {\em J. Math. Anal. Appl.}, 382(1):426--447, 2011.

\bibitem{Seybold:2008}
H.~Seybold and R.~Hilfer.
\newblock Numerical algorithm for calculating the generalized
  {M}ittag-{L}effler function.
\newblock {\em SIAM J. Numer. Anal.}, 47(1):69--88, 2008/09.

\bibitem{Thomee97}
V.~Thom{\'e}e.
\newblock {\em Galerkin {F}inite {E}lement {M}ethods for {P}arabolic
  {P}roblems}, volume~25 of {\em Springer Series in Computational Mathematics}.
\newblock Springer-Verlag, Berlin, 1997.

\bibitem{XieZou:2006}
J.~Xie and J.~Zou.
\newblock Numerical reconstruction of heat fluxes.
\newblock {\em SIAM J. Numer. Anal.}, 43(4):1504--1535, 2006.

\bibitem{YusteAcedo:2005}
S.~Yuste and L.~Acedo.
\newblock An explicit finite difference method and a new von {N}eumann-type
  stability analysis for fractional diffusion equations.
\newblock {\em SIAM J. Numer. Anal.}, 42(5):1862--1874, 2005.

\end{thebibliography}
\end{document}